\newtheorem{theorem}{Theorem}
\newtheorem{proposition}{Proposition}
\newtheorem{lemma}{Lemma}
\DeclarePairedDelimiter\ceil{\lceil}{\rceil}
\newcommand{\norm}[1]{\left \lVert #1 \right\rVert}
\def\<#1,#2>{\langle #1,#2\rangle}
\newcommand{\ve}[2]{\langle #1 ,  #2 \rangle}   
\newcommand{\eqdef}{\stackrel{\text{def}}{=}}
\newcommand{\R}{\mathbb{R}}
\newcommand{\Reg}{\psi}
\newcommand{\Exp}{\mathbf{E}}
\newcommand{\Prob}{\mathbf{P}}
\newcommand{\bN}{\mathbb{N}}
\newcommand{\dom}{\operatorname{dom}}
\newtheorem{coro}{Corollary}
\newtheorem{ass}{Assumption}
\newtheorem{remark}{Remark}
\author{Olivier Fercoq\thanks{LTCI,
	    CNRS, T\'el\'ecom ParisTech, Universit\'e Paris-Saclay, Paris,  France,
              (e-mail: {olivier.fercoq@telecom-paristech.fr})} \and Zheng Qu\thanks{Department of Mathematics, The University of Hong Kong,
Hong Kong, China,
              (e-mail: {zhengqu@hku.hk})}
}
\newcommand{\TheTitle}{Restarting accelerated gradient methods with a rough strong convexity estimate}
\title{{\TheTitle}}
\begin{document}

\maketitle

\begin{abstract}
We propose new restarting strategies for accelerated gradient and accelerated coordinate descent methods.
Our main contribution is to show that the restarted method has a geometric rate 
of convergence for any restarting frequency, and so it allows us to take profit of restarting even when we do not know the strong convexity coefficient.
The scheme can be combined with adaptive restarting, leading to the first 
provable convergence for adaptive restarting schemes with accelerated gradient methods.
Finally, we illustrate the properties of the algorithm on a regularized logistic regression problem and on a Lasso problem.
\end{abstract}


\section{Introduction}


\subsection{Motivation}

The proximal gradient method aims at minimizing composite
convex functions of the form $$F(x) = f(x) + \psi(x),\enspace x\in \R^n$$
where $f$ is differentiable with Lipschitz gradient 
and $\psi$ may be nonsmooth but has an easily computable proximal operator.  For a mild additional computational cost, accelerated gradient methods transform 
the proximal gradient method, for which the optimality gap 
$F(x_k) - F(x_*)$ decreases as $O(1/k)$, into an algorithm with 
``optimal'' $O(1/k^2)$ complexity~\cite{nesterov1983method}.  
Accelerated variants include the
dual accelerated proximal gradient~\cite{Nesterov05:smooth, nesterov2013gradient}, the accelerated proximal gradient method (APG)~\cite{tseng2008accelerated}
and FISTA~\cite{beck2009fista}. 
Gradient-type methods, 
also called first-order methods, 
are often used to solve large-scale problems  because of their good scalability and easiness
of implementation that facilitates parallel and distributed computations.

In the case when 
the nonsmooth function $\psi$ is separable, which means 
that it writes as $$\psi(x) = \sum_i \psi^i(x^i),\enspace x=(x^1,\dots,x^n)\in \R^n,$$
coordinate descent methods are often considered thanks to the separability of the proximal operator of~$\psi$.
These are optimization algorithms that update only one coordinate of the
 vector of variables at each iteration, hence using partial 
 derivatives rather than the whole gradient. 
In~\cite{Nesterov:2010RCDM}, Nesterov introduced the randomized coordinate descent method with
an improved guarantee on the iteration complexity.
He also gave an accelerated coordinate descent method
for smooth functions.
Lee and Sidford~\cite{lee2013efficient} introduced an efficient implementation of the method 
and Fercoq and Richt\'arik~\cite{FR:2013approx} developed the accelerated proximal coordinate descent method (APPROX) for the minimization of composite functions.


When solving a strongly convex problem, classical
(non-accelerated) gradient and coordinate descent methods
automatically have a linear rate of convergence,
i.e. $F(x_k) - F(x_*) \in O((1-\mu)^k)$
for a problem dependent $0<\mu<1$, whereas one needs to know explicitly the strong convexity parameter in order
to set accelerated gradient and accelerated coordinate descent methods to have a linear rate of convergence, 
see for instance~\cite{lee2013efficient,UniversalCatalyst,lin2014accelerated,Nesterov:2010RCDM,nesterov2013gradient}. 
Setting the algorithm with an incorrect parameter may result in a slower algorithm, sometimes even slower than if
we had not tried to set an acceleration scheme~\cite{o2012adaptive}.
This is a major drawback of the method because in general, the strong convexity parameter is
difficult to estimate. 

In the context of accelerated gradient method with unknown strong convexity parameter, Nesterov~\cite{nesterov2013gradient} proposed a restarting scheme which adaptively approximate the strong convexity parameter. The similar idea was exploited by Lin and Xiao~\cite{Lin2015} for sparse optimization.
Nesterov~\cite{nesterov2013gradient} also showed that,
instead of deriving a new method designed to work better for strongly convex functions,
one can restart the accelerated gradient method and get a linear convergence rate. However, the restarting frequency he proposed still depends explicitly on the strong convexity
of the function and so  O'Donoghue and Candes~\cite{o2012adaptive} introduced some heuristics to adaptively restart the 
algorithm and obtain good results in practice. 


\subsection{Contributions}
 
In this paper, we show that we can restart accelerated  gradient and coordinate descent methods, 
including APG, FISTA and APPROX,
at {\em any} frequency and get a linearly convergent algorithm. 
The rate depends on an estimate of the strong convexity 
and we show that for a wide range of this parameter,
one obtains a faster rate than without acceleration.
In particular, we do not require the estimate
of the strong convexity coefficient to be smaller than
the actual value.
In this way, our result supports and explains the practical success of arbitrary periodic restart 
for accelerated gradient methods.

In order to obtain the improved theoretical rate, we need to define
a novel point where the restart takes place, which is a convex
combination of previous iterates. Our approach is radically different from the previous restarting schemes~\cite{Lin2015,nesterov2013gradient,o2012adaptive}, for which the evaluation of the gradient or the objective value is needed in order to verify the restarting condition. In particular, our approach can be extended to a restarted APPROX, which admits the same theoretical complexity bound as the accelerated coordinate descent methods for strongly convex functions~\cite{lin2014accelerated} and exhibits better performance in numerical experiments.

In Sections~\ref{sec:problem} and~\ref{sec:convgrad} we recall
the main convergence results for accelerated gradient methods.
In Section~\ref{sec:restart}, we present our restarting rules:
one for accelerated gradient and one for accelerated
coordinate descent. Finally, we present numerical experiments
on the lasso and logistic regression problem in Section~\ref{sec:expe}.


\section{Accelerated gradient schemes}
\label{sec:problem}

\subsection{Problem and assumptions}
For simplicity we present the algorithm in  coordinatewise form. The extension to blockwise setting follows naturally (see for instance~\cite{FR:2013approx}).
We consider the following optimization problem:
\begin{equation}\label{eq-prob}
\begin{array}{ll}
 \mathrm{minimize} & F(x):=f(x)+\psi(x)\\
\mathrm{subject~to~} & x=(x^1,\dots,x^n)\in \R^n,
\end{array}
\end{equation}
where $f:\R^n\rightarrow \R$ is a differentiable convex function and $\psi:\R^n\rightarrow \R\cup\{+\infty\}$ is a closed convex  and separable function:
$$
\psi(x)=\sum_{i=1}^n\psi^i(x^i).
$$
Note that this implies that each function $\psi^i:\R\rightarrow \R$ is closed and convex. Let $x^*$ denote a solution of~\eqref{eq-prob}. We further assume that for each positive vector $v=(v_1,\dots,v_n)$, there is a constant $\mu_F(v)>0$ such that
\begin{align}\label{a:strconv}
F(x)\geq F(x^*) +\frac{\mu_F(v)}{2}\|x-x^*\|_v^2,
\end{align}
where $\|\cdot\|_v$ denotes the weighted  Euclidean norm in $\R^n$ defined by:
\[\|x\|_v^2 \eqdef \sum_{i=1}^n v_i (x^i)^2.\]
\begin{remark} \normalfont
Note that~\eqref{a:strconv} is weaker than the usual strong convexity assumption on $F$. In particular, ~\eqref{a:strconv} does not imply that the objective function $F$ is strongly convex. However, ~\eqref{a:strconv} entails the uniqueness of solution to~\eqref{eq-prob} and by abuse of language, we refer to~\eqref{a:strconv} as the strong convexity assumption.
\end{remark}
\subsection{Accelerated gradient schemes}

In this paper, we are going to restart accelerated gradient schemes.
We will concentrated on three versions that we will call FISTA (Fast Iterative Soft Thresholding Algorithm)~\cite{beck2009fista},
APG (Accelerated Proximal Gradient) \cite{tseng2008accelerated}
and APPROX (Accelerated Parallel and PROXimal coordinate descent) \cite{FR:2013approx}. 
In the following,  $\nabla f(y_k)$ denotes the gradient of $f$ at point $y_k$ and  $\nabla_i f(y_k)$ denotes the partial derivative of $f$ at point $y_k$ with respect to the $i$th coordinate.   $\hat S$ is  a random subset of $[n]:= \{1,2,\dots,n\}$ with the property that $\Prob(i \in \hat{S})=\Prob(j\in \hat{S})$ for all $i,j \in [n]$ and $\tau=\Exp[|\hat S|]$.
\begin{algorithm}
\begin{algorithmic}[1]
\STATE{Choose $x_0\in \dom \psi$. Set $\theta_0 = 1$ and $z_0 = x_0$}.
\FOR{$k\geq 0$} 
\STATE $y_k=(1-\theta_k)x_k+\theta_k z_k$ \\
\STATE
$x_{k+1} = \arg\min_{x\in \R^n} \big\{\< \nabla f(y_k), x-y_k>+\frac{1}{2} \|x-y_k\|^2_v+\psi(x) \big\}$ \\
\STATE $z_{k+1}=z_k+ \frac{1}{\theta_k}(x_{k+1}-y_k)$\\
\STATE $\theta_{k+1}=\frac{\sqrt{\theta_k^4 + 4 \theta_k^2} - \theta_k^2}{2}$
\ENDFOR
\end{algorithmic}
\caption{FISTA}
\end{algorithm}

\begin{algorithm}
\begin{algorithmic}[h!]
\STATE{Choose $x_0\in \dom \psi$. Set $\theta_0 = 1$ and $z_0 = x_0$}.
\FOR{$k\geq 0$} 
\STATE $y_k=(1-\theta_k)x_k+\theta_k z_k$ \\
\STATE
$
z_{k+1}
=\arg\min_{z\in \R^n} \big\{\< \nabla f(y_k), z-y_k>+\frac{\theta_k}{2} \|z-z_k\|^2_v+\psi(z) \big\}
$ \\
\STATE $x_{k+1}=y_k+ \theta_k(z_{k+1}-z_k)$\\
\STATE $\theta_{k+1}=\frac{\sqrt{\theta_k^4 + 4 \theta_k^2} - \theta_k^2}{2}$
\ENDFOR
\end{algorithmic}
\caption{APG}
\end{algorithm}

\begin{algorithm}
\begin{algorithmic}[h!]
\STATE{Choose $x_0\in \dom \psi$. Set $\theta_0 = \frac{\tau}{n}$ and $z_0 = x_0$}.
\FOR{$k\geq 0$} 
\STATE $y_k=(1-\theta_k)x_k+\theta_k z_k$ \\
\STATE Randomly generate $S_k \sim \hat{S}$\\
\FOR{ $i \in S_k$}
\STATE
$
z_{k+1}^i
=\arg\min_{z\in \R} \big\{\< \nabla_i f(y_k), z-y_k^i>+\frac{\theta_kn v_i}{2 \tau} |z-z_k^i|^2+\psi^i(z) \big\}
$ \\
\ENDFOR
\STATE $x_{k+1}=y_k+\frac{n}{\tau}\theta_k(z_{k+1}-z_k)$\\
\STATE $\theta_{k+1}=\frac{\sqrt{\theta_k^4 + 4 \theta_k^2} - \theta_k^2}{2}$
\ENDFOR
\end{algorithmic}
\caption{APPROX}
\end{algorithm}
We have written the algorithms in a unified framework to emphasize their similarities.
Practical implementations usually consider only two variables: $(x_k, y_k)$ for FISTA,
$(y_k, z_k)$ for APG and $(z_k, w_k)$ where $w_k=\theta_{k-1}^{-2}(x_k - z_k)$ for APPROX. One may also consider $t_k = \theta_k^{-1}$ instead of $\theta_k$.

The update in FISTA, APG or APPROX employs a positive vector $v\in \R^n$. To guarantee the convergence of the algorithm, the positive vector $v$ should satisfy the so-called expected separable overapproximation (ESO) assumption, developed in~\cite{FR:spcdm,RT:PCDM} for the study of parallel coordinate descent methods.
\begin{ass}[ESO] \label{ass:ESO} 
We write $(f,\hat{S})\sim \mathrm{ESO}(v)$ if
\begin{equation}\label{eq:ESO}\Exp\left[f(x+h_{[\hat{S}]})\right] \leq f(x) + \frac{\tau}{n}\left(\ve{\nabla f(x)}{h} + \frac{1}{2}\|h\|_{v}^2\right), \qquad x,h \in \R^n.
\end{equation}
where for $h=(h^1,\dots, h^n)\in \R^n$ and $S\subset [n]$, $h_{[S]}$ is defined as: \[ h_{[S]} \eqdef \sum_{i\in S} h^i e_i ,\] with $e_i$ being the $i$th standard basis vectors in $\R^n$.
\end{ass}
We require that the positive vector $v$ used in APPROX satisfy~\eqref{eq:ESO} with respect to the sampling $\hat S$ used. Note that FISTA shares the same constant $v$ with APG and APG can be seen as a special case of APPROX when $\hat S=[n]$. Therefore the positive vector $v$ used in FISTA and APG  should then satisfy:
\begin{equation*}f(x+h)
 \leq f(x) + \ve{\nabla f(x)}{h} + \frac{1}{2}\|h\|_{v}^2, \qquad x,h \in \R^n,
\end{equation*}
which is nothing but a Lipschitz condition on the gradient of $f$.  
In other words, the vector $v$ used in FISTA and APG is just the Lipschitz constant of $\nabla f$, given a diagonal scaling that may
be chosen to improve the conditioning of the problem. 

When in each step we update only one coordinate, we have $\tau=1$ and~\eqref{eq:ESO} reduces to:
\begin{equation}\label{eq:ESO-2}
\frac{1}{n}\sum_{i=1}^n  f(x+h^ie_i) \leq f(x) + \frac{1}{n}\left(\ve{\nabla f(x)}{h} + \frac{1}{2}\|h\|_{v}^2\right), \qquad x,h \in \R^n.
\end{equation}
It is easy to see that in this case the vector $v$ corresponds to the coordinate-wise Lipschitz constants of $\nabla f$, see e.g.~\cite{Nesterov:2010RCDM}.  Explicit formulas for computing admissible $v$ with respect to more general sampling $\hat S$ can be found in~\cite{RT:PCDM,FR:spcdm, QR:acdn}.

\section{Convergence results for accelerated gradients methods}
\label{sec:convgrad}
In this section we review two basic convergence results of FISTA and APPROX, which will be used later to build restarted methods. We first recall the following properties on the sequence $\{\theta_k\}$. 
\begin{lemma}
The sequence $(\theta_k)$ defined by $\theta_0 \leq 1$ and $\theta_{k+1} = \frac{\sqrt{\theta_k^4 + 4 \theta_k^2} - \theta_k^2}{2}$ satisfies
\begin{align}&\label{athetabd}\frac{1}{k+1/\theta_0} \leq \theta_k \leq \frac{2}{k+2/\theta_0}
\\\label{arectheta}
&\frac{1-\theta_{k+1}}{\theta_{k+1}^2}=\frac{1}{\theta_k^2},\enspace \forall k=0,1,\dots
\\& \theta_{k+1}\leq \theta_k, \enspace \forall k =0,1,\dots, \label{atheradecr}
\end{align}
\end{lemma}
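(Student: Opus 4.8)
The plan is to treat the recurrence \eqref{arectheta} as the cornerstone, since it is really the defining equation for $\theta_{k+1}$ in disguise, and then to deduce the monotonicity \eqref{atheradecr} and the two-sided bound \eqref{athetabd} from it. First I would rewrite the update as $2\theta_{k+1} + \theta_k^2 = \sqrt{\theta_k^4 + 4\theta_k^2}$ and square both sides; the $\theta_k^4$ terms cancel and the identity collapses to $\theta_{k+1}^2 = (1-\theta_{k+1})\theta_k^2$, which is exactly \eqref{arectheta} after dividing by $\theta_{k+1}^2\theta_k^2$. To form the reciprocals used below and to divide safely, I would record by induction that $0 < \theta_k \leq 1$ for all $k$: the base case is the hypothesis $\theta_0 \leq 1$ together with $\theta_0 > 0$, and if $0 < \theta_k \leq 1$ then $\sqrt{\theta_k^4 + 4\theta_k^2} > \theta_k^2$ gives $\theta_{k+1} > 0$, while the just-derived $\theta_{k+1}^2 = (1-\theta_{k+1})\theta_k^2 \geq 0$ forces $\theta_{k+1} \leq 1$.

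The monotonicity \eqref{atheradecr} is then immediate: from $\theta_{k+1}^2 = (1-\theta_{k+1})\theta_k^2$ and $\theta_{k+1} \geq 0$ we get $\theta_{k+1}^2 \leq \theta_k^2$, hence $\theta_{k+1} \leq \theta_k$ since both are positive.

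For the two-sided bound \eqref{athetabd}, which carries the actual content of the lemma, I would pass to the reciprocals $t_k := 1/\theta_k$. Relation \eqref{arectheta} becomes $t_k^2 = t_{k+1}^2 - t_{k+1}$, i.e.\ $t_{k+1}^2 - t_k^2 = t_{k+1}$, and factoring the left-hand side gives $t_{k+1} - t_k = \frac{t_{k+1}}{t_{k+1} + t_k}$. Since $t_{k+1}^2 = t_k^2 + t_{k+1} > t_k^2$ shows $t_{k+1} > t_k > 0$, this increment lies strictly between $\frac12$ and $1$: the upper bound uses $t_{k+1} < t_{k+1} + t_k$, and the lower bound uses $t_{k+1} + t_k < 2 t_{k+1}$. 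Telescoping $\frac12 < t_{j+1} - t_j < 1$ from $j = 0$ to $k-1$ yields $t_0 + \frac{k}{2} \leq t_k \leq t_0 + k$ for all $k \geq 0$ (the endpoint $k=0$ holding by the trivial identity $t_0 = t_0$), and substituting $t_0 = 1/\theta_0$, $t_k = 1/\theta_k$ and inverting gives precisely $\frac{1}{k + 1/\theta_0} \leq \theta_k \leq \frac{2}{k + 2/\theta_0}$.

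I expect the only real obstacle to be the bookkeeping in this last step: one must bound the increment $t_{k+1} - t_k$ by the \emph{sharp} constants $\frac12$ and $1$ (rather than by some loose constant) so that, after telescoping and inverting, the leading coefficients $1$ and $2$ in \eqref{athetabd} come out exactly, and one must verify the endpoint $k=0$ separately, since the strictly telescoped inequalities only reduce to the claimed non-strict ones there.
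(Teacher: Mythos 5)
Your proof is correct, and while your derivations of \eqref{arectheta} (direct squaring of the update, equivalently identifying $\theta_{k+1}$ as the positive root of $X^2+\theta_k^2X-\theta_k^2$) and of \eqref{atheradecr} match the paper's, your route to the two-sided bound \eqref{athetabd} is genuinely different. The paper runs two separate inductions on $\theta_k$ itself: the upper bound by checking $P\bigl(\tfrac{2}{k+1+2/\theta_0}\bigr)\geq 0$ for the polynomial $P(X)=X^2+\theta_k^2X-\theta_k^2$, and the lower bound by a contradiction argument that feeds the recursion $\tfrac{1}{\theta_{k+1}^2}=\tfrac{1}{\theta_k^2}+\tfrac{1}{\theta_{k+1}}$ with the already-established upper bound. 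You instead pass to $t_k=1/\theta_k$, extract the exact increment $t_{k+1}-t_k=\tfrac{t_{k+1}}{t_{k+1}+t_k}\in(\tfrac12,1)$ from $t_{k+1}^2-t_k^2=t_{k+1}$, and telescope; both bounds then drop out of a single summation, with no polynomial evaluation and no dependence of the lower bound on the upper. This is cleaner and more symmetric, and the sharp constants $\tfrac12$ and $1$ you insist on are exactly what reproduce the coefficients $2$ and $1$ in \eqref{athetabd} after inversion. The only caveats are cosmetic: you correctly note that positivity of $\theta_0$ must be assumed (it is implicit in the paper, where $\theta_0=1$ or $\tau/n$), and that the strict telescoped inequalities degenerate to equalities at $k=0$, which you handle.
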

\begin{proof}
We give the proof for completeness.
\eqref{arectheta} holds because $\theta_{k+1}$ is the unique positive square root
to the polynomial $P(X) = X^2 + \theta_k^2 X - \theta_k^2$. \eqref{atheradecr} is a direct
consequence of \eqref{arectheta}.

Let us prove \eqref{athetabd} by induction. 
It is clear that $\theta_0 \leq \frac{2}{0+2/\theta_0}$. 
Assume that $\theta_k \leq \frac{2}{k+2/\theta_0}$.
We know that $P(\theta_{k+1}) = 0$ and that $P$ is an increasing function on $[0 , +\infty]$. So we just need to show that $P\big(\frac{2}{k+1+2/\theta_0}\big)\geq 0$.
\begin{align*}
P\Big(\frac{2}{k+1+2/\theta_0}\Big) = \frac{4}{(k+1+2/\theta_0)^2} 
+ \frac{2}{k+1+2/\theta_0}\theta_k^2 - \theta_k^2
\end{align*}
As $\theta_k \leq \frac{2}{k+2/\theta_0}$ and $\frac{2}{k+1+2/\theta_0}-1 \leq 0$,
\begin{equation*}
P\Big(\frac{2}{k+1+2/\theta_0}\Big) \geq \frac{4}{(k+1+2/\theta_0)^2} 
+ \Big(\frac{2}{k+1+2/\theta_0} - 1\Big) \frac{4}{(k+2/\theta_0)^2}
 = \frac{1}{(k+1+2/\theta_0)^2 (k+2/\theta_0)^2} \geq 0.
\end{equation*}

For the other inequality,  $\frac{1}{0+1/\theta_0} \leq \theta_0$.
We now assume that $\theta_k \geq \frac{1}{k+1/\theta_0}$ but
that $\theta_{k+1} < \frac{1}{k+1+1/\theta_0}$. Then, using \eqref{arectheta} and the inequality we just proved we have
\begin{align*} 
(k+1+1/\theta_0)^2 < \frac{1}{\theta_{k+1}^2}  \overset{\eqref{arectheta}}{=} \frac{1}{\theta_k^2} + \frac{1}{\theta_{k+1}} \leq (k+1/\theta_0)^2 + (k+1+2/\theta_0).
\end{align*}
This is equivalent to
\begin{equation*}
2 (k+1/\theta_0) + 1 < k + 1 + 2/\theta_0
\end{equation*}
which obviously does not hold for any $k \geq 0$.
So $\theta_{k+1} \geq \frac{1}{k+1+1/\theta_0}$.
\end{proof}

\begin{proposition}
\label{prop:fista_basic}
The iterates of FISTA satisfy for all $k \geq 1$,
\begin{equation}
\label{eq:itcompl_fista}
\frac{1}{\theta_{k-1}^2}(F(x_{k}) - F(x_*)) + \frac{1}{2}\norm{z_{k} - x_*}_v^2 \leq  \frac{1}{2}\norm{x_{0} - x_*}_v^2
\end{equation}
and 
\begin{equation}
\label{eq:stability_fista}
\frac{1}{2}\norm{x_k - x_*}_v^2  \leq  \frac{1}{2}\norm{x_{0} - x_*}_v^2 
\end{equation}

\end{proposition}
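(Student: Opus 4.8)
The plan is to prove a one-step descent inequality, package it into a potential (Lyapunov) quantity that decreases along the iterates, and telescope. The starting point is the Lipschitz/descent property that the vector $v$ enjoys for FISTA, namely $f(x+h) \le f(x) + \ve{\nabla f(x)}{h} + \frac12\norm{h}_v^2$. Applying this at $y_k$ together with the definition of $x_{k+1}$ as the minimizer of the model $m_k(x) = f(y_k) + \ve{\nabla f(y_k)}{x-y_k} + \frac12\norm{x-y_k}_v^2 + \psi(x)$, and exploiting that $m_k$ is $1$-strongly convex with respect to $\norm{\cdot}_v$, I would derive that for every $x$,
\[
F(x_{k+1}) \le F(x) + \tfrac12\norm{x-y_k}_v^2 - \tfrac12\norm{x-x_{k+1}}_v^2,
\]
where convexity of $f$ is used to dominate the linearization of $f$ by $F(x)$.

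Next I would specialize to the test point $x = (1-\theta_k)x_k + \theta_k x_*$ and invoke convexity of $F$ to write $F\big((1-\theta_k)x_k+\theta_k x_*\big) \le (1-\theta_k)F(x_k) + \theta_k F(x_*)$. The crucial algebraic step is to rewrite the two quadratic terms via the update rules $y_k=(1-\theta_k)x_k+\theta_k z_k$ and $x_{k+1}=y_k+\theta_k(z_{k+1}-z_k)$, which give the exact identities $(1-\theta_k)x_k+\theta_k x_* - y_k = \theta_k(x_*-z_k)$ and $(1-\theta_k)x_k+\theta_k x_* - x_{k+1} = \theta_k(x_*-z_{k+1})$. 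Both norms then carry a factor $\theta_k^2$. Subtracting $F(x_*)$ and dividing by $\theta_k^2$ yields
\[
\frac{1}{\theta_k^2}\big(F(x_{k+1})-F(x_*)\big) + \tfrac12\norm{z_{k+1}-x_*}_v^2 \le \frac{1-\theta_k}{\theta_k^2}\big(F(x_k)-F(x_*)\big) + \tfrac12\norm{z_k-x_*}_v^2.
\]
Using \eqref{arectheta} in the form $\frac{1-\theta_k}{\theta_k^2}=\frac{1}{\theta_{k-1}^2}$ for $k\ge1$, and noting that for $k=0$ the coefficient $(1-\theta_0)/\theta_0^2$ vanishes since $\theta_0=1$ and $z_0=x_0$, the right-hand side is exactly the potential $E_k := \frac{1}{\theta_{k-1}^2}(F(x_k)-F(x_*)) + \frac12\norm{z_k-x_*}_v^2$ at step $k$ (with $E_0$ read off as $\frac12\norm{x_0-x_*}_v^2$). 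Telescoping $E_k \le E_{k-1} \le \dots \le E_0$ then gives precisely \eqref{eq:itcompl_fista}.

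For the stability bound \eqref{eq:stability_fista}, I would combine $y_k=(1-\theta_k)x_k+\theta_k z_k$ with $x_{k+1}=y_k+\theta_k(z_{k+1}-z_k)$ to obtain $x_{k+1}=(1-\theta_k)x_k+\theta_k z_{k+1}$, which is a genuine convex combination because $0<\theta_k\le1$ (as follows from \eqref{athetabd} with $\theta_0=1$). Convexity of $\norm{\cdot-x_*}_v^2$ then gives $\norm{x_{k+1}-x_*}_v^2 \le (1-\theta_k)\norm{x_k-x_*}_v^2 + \theta_k\norm{z_{k+1}-x_*}_v^2$. Since \eqref{eq:itcompl_fista} together with nonnegativity of the function gap already forces $\norm{z_{k+1}-x_*}_v^2 \le \norm{x_0-x_*}_v^2$, a short induction on $k$ starting from $x_0$ propagates $\norm{x_k-x_*}_v^2 \le \norm{x_0-x_*}_v^2$, which is \eqref{eq:stability_fista}.

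I expect the main obstacle to be the exact algebraic identification of the two displacement vectors as $\theta_k(x_*-z_k)$ and $\theta_k(x_*-z_{k+1})$: obtaining these clean factors of $\theta_k^2$ is what makes the potential telescope, and it relies on correctly chaining the three update rules for $y_k$, $x_{k+1}$, and $z_{k+1}$. Everything else — the convexity estimates, the application of the $\theta$-recurrence, and the final induction — is routine once this identity is secured.
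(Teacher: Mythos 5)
Your proof is correct. For \eqref{eq:itcompl_fista} the paper does not reprove anything: it observes that $z_{k+1}=\theta_k^{-1}x_{k+1}-(\theta_k^{-1}-1)x_k$ and then invokes Lemma~4.1 of Beck and Teboulle, whereas you supply the underlying derivation yourself --- the one-step descent estimate $F(x_{k+1})\le F(x)+\tfrac12\norm{x-y_k}_v^2-\tfrac12\norm{x-x_{k+1}}_v^2$, the test point $(1-\theta_k)x_k+\theta_k x_*$, the identities $x-y_k=\theta_k(x_*-z_k)$ and $x-x_{k+1}=\theta_k(x_*-z_{k+1})$, division by $\theta_k^2$, the recursion \eqref{arectheta}, and telescoping from $\theta_0=1$, $z_0=x_0$. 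This is precisely the standard potential-function proof that the cited lemma encapsulates, so the route is the same in substance, just self-contained; the only thing it buys is independence from the external reference. For \eqref{eq:stability_fista} both you and the paper first drop the nonnegative gap term in \eqref{eq:itcompl_fista} to get $\tfrac12\norm{z_i-x_*}_v^2\le\tfrac12\norm{x_0-x_*}_v^2$ and then exploit the convex-combination relation $x_{i+1}=\theta_i z_{i+1}+(1-\theta_i)x_i$; the paper unrolls it to $x_k=\sum_{i=0}^k\gamma_k^i z_i$ and applies Jensen in one shot, while you propagate the bound by a one-step induction. The two arguments are equivalent, and yours avoids introducing the coefficients $\gamma_k^i$ explicitly. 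No gaps.
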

\begin{proof}
Since $z_{k+1} = z_{k} + \theta_{k}^{-1}(x_{k+1} - y_{k}) = \theta_{k}^{-1} x_{k+1} - (\theta_{k}^{-1} - 1) x_{k}$, Inequality \eqref{eq:itcompl_fista} is a simple consequence of Lemma~4.1 in~\cite{beck2009fista} (Note that we have a shift of indices for our variables $(x_{k+1}, z_{k+1})$ vs $(x_k, u_k+x_*)$ in~\cite{beck2009fista}). For the second inequality, 
we first remark that as the left term in \eqref{eq:itcompl_fista} is the sum of two
nonnegative summands, each of them is smaller than the right hand side. Hence, for all $k$, 
\[
\frac 12 \norm{z_{k} - x_*}_v^2 \leq \frac 12 \norm{x_{0} - x_k}_v^2 .
\]
Then, we rewrite $z_{i+1} = z_{i} + \theta_{i}^{-1}(x_{i+1} - y_{i})$ as $x_{i+1} = \theta_i z_{i+1} + (1- \theta_i) x_i$. Recursively applying this convex equality, we get 
that there exists $\gamma_k^i \geq 0$ such that $\sum_{i=0}^k \gamma_k^i = 1$ and $x_k =  \sum_{i=0}^k \gamma_k^i z_i$.
\begin{align*}
\frac 12 \norm{x_k - x_*}_v^2 = \frac{1}{2} \norm{\sum_{i=0}^n \gamma_k^i (z_i - x_*)}_v^2  \leq \frac{1}{2}\sum_{i=0}^n \gamma^i_k \norm{z_i - x_*}_v^2 \leq \frac{1}{2}\sum_{i=0}^n \gamma^i_k  \norm{x_{0} - x_k}_v^2
\end{align*}
and we conclude using $\sum_{i=0}^k \gamma^i_k= 1$.
\end{proof}

\begin{proposition}
\label{prop:approx_basic}
The iterates of APPROX satisfy for all $k \geq 1$,
\begin{equation}
\label{eq:itcompl_approx}
\frac{1}{\theta_{k-1}^2}\Exp[F(x_{k}) - F(x_*)] + \frac{1}{2\theta_0^2}\Exp[\norm{z_{k} - x_*}_v^2] \leq \frac{1 - \theta_0}{\theta_{0}^2}(F(x_{0}) - F(x_*)) + \frac{1}{2\theta_0^2}\norm{x_{0} - x_*}_v^2 
\end{equation}
and 
\begin{align}
\label{eq:stability_approx}
\frac{1 - \theta_0}{\theta_{0}^2}&\Exp[F(x_{k}) - F(x_*)]  + \frac{1}{2\theta_0^2} \Exp[\norm{x_k - x_*}_v^2] \leq  \frac{1 - \theta_0}{\theta_{0}^2}(F(x_{0}) - F(x_*))  \notag \\
 & + \frac{1}{2\theta_0^2}\norm{x_{0} - x_*}_v^2 - \sum_{i=0}^{k-1}  \frac{ \gamma^i_k}{\theta_{i-1}^2}\Exp[F(x_{i}) - F(x_*)] -  \left(\frac{1}{\theta_0 \theta_{k-1}}  -\frac{1 - \theta_0}{\theta_{0}^2} \right)\Exp[F(x_{k}) - F(x_*)]
\end{align}
where $\frac{1}{\theta_{-1}^2} := \frac{1-\theta_0}{\theta_0^2}$ and $\gamma^i_k$ is defined recursively by setting $\gamma_{0}^0=1$, $\gamma_{1}^0 = 0$, $\gamma_{1}^1 = 1$ and for $k\geq 1$,
\begin{equation}\label{eq:gammas}\gamma_{k+1}^i = \begin{cases}
(1-\theta_{k})\gamma_{k}^i, & i = 0,\dots,k-1,\\
\theta_{k} (1-\frac{n}{\tau}\theta_{k-1})+\frac{n}{\tau}(\theta_{k-1}-\theta_{k}),  & i=k,\\
\tfrac{n}{\tau}\theta_{k}, & i=k+1. 
\end{cases}\end{equation}
\end{proposition}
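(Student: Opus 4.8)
The plan is to follow the same two-step strategy as in the proof of Proposition~\ref{prop:fista_basic}: inequality~\eqref{eq:itcompl_approx} is the standard convergence estimate for APPROX and can be invoked directly from~\cite{FR:2013approx} (modulo the shift of indices between our $(x_{k+1},z_{k+1})$ and their variables), so the real work lies in the stability bound~\eqref{eq:stability_approx}. As in the FISTA case, the idea is to write $x_k$ as a convex combination of the iterates $z_0,\dots,z_k$, bound each $\Exp[\norm{z_i-x_*}_v^2]$ through~\eqref{eq:itcompl_approx}, and then invoke convexity of the squared norm.

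First I would establish the representation $x_k=\sum_{i=0}^k\gamma_k^i z_i$ with the coefficients of~\eqref{eq:gammas}. Eliminating $y_k$ from the APPROX updates gives the three-term recursion
\begin{equation*}
x_{k+1}=(1-\theta_k)x_k+\theta_k\Big(1-\tfrac{n}{\tau}\Big)z_k+\tfrac{n}{\tau}\theta_k z_{k+1}.
\end{equation*}
Starting from $x_0=z_0$ and $x_1=z_1$ (the latter using $\theta_0=\tau/n$), an induction substituting $x_k=\sum_{i=0}^k\gamma_k^i z_i$ and matching the coefficient of each $z_i$ reproduces exactly~\eqref{eq:gammas}; in particular $\gamma_k^k=\tfrac{n}{\tau}\theta_{k-1}$. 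The subtle point is that these must be genuine convex weights. That $\sum_{i=0}^k\gamma_k^i=1$ follows because each recursion step preserves the total mass. Nonnegativity is more delicate: in the formula for $\gamma_{k+1}^k$ the term $\tfrac{n}{\tau}(\theta_{k-1}-\theta_k)$ is nonnegative by~\eqref{atheradecr}, while $\theta_k\big(1-\tfrac{n}{\tau}\theta_{k-1}\big)\geq 0$ because $\theta_{k-1}\leq\theta_0=\tau/n$, again combining~\eqref{atheradecr} with the APPROX initialization.

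With the convex combination in hand, I would apply~\eqref{eq:itcompl_approx} with $k$ replaced by each $i$ to obtain, for every $i\geq 0$,
\begin{equation*}
\frac{1}{2\theta_0^2}\Exp[\norm{z_i-x_*}_v^2]\leq \frac{1-\theta_0}{\theta_0^2}(F(x_0)-F(x_*))+\frac{1}{2\theta_0^2}\norm{x_0-x_*}_v^2-\frac{1}{\theta_{i-1}^2}\Exp[F(x_i)-F(x_*)],
\end{equation*}
where for $i=0$ this reduces to an equality via the convention $\tfrac{1}{\theta_{-1}^2}=\tfrac{1-\theta_0}{\theta_0^2}$ together with $z_0=x_0$. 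Since the $\gamma_k^i$ are deterministic, the bound $\norm{x_k-x_*}_v^2\leq\sum_{i=0}^k\gamma_k^i\norm{z_i-x_*}_v^2$ holds pathwise by convexity; taking expectations and inserting the displayed inequality, the constant terms collapse because $\sum_i\gamma_k^i=1$, leaving
\begin{equation*}
\frac{1}{2\theta_0^2}\Exp[\norm{x_k-x_*}_v^2]\leq \frac{1-\theta_0}{\theta_0^2}(F(x_0)-F(x_*))+\frac{1}{2\theta_0^2}\norm{x_0-x_*}_v^2-\sum_{i=0}^k\frac{\gamma_k^i}{\theta_{i-1}^2}\Exp[F(x_i)-F(x_*)].
\end{equation*}
Peeling off the $i=k$ summand and using $\gamma_k^k/\theta_{k-1}^2=\tfrac{n}{\tau}/\theta_{k-1}=1/(\theta_0\theta_{k-1})$, then moving $\tfrac{1-\theta_0}{\theta_0^2}\Exp[F(x_k)-F(x_*)]$ to the left-hand side, yields precisely~\eqref{eq:stability_approx}.

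The main obstacle I anticipate is the nonnegativity of the weights $\gamma_k^i$: it is exactly what legitimizes the convexity step, and it hinges specifically on the APPROX initialization $\theta_0=\tau/n$ (forcing $\theta_{k-1}\leq\tau/n$) rather than on any property shared with FISTA. Everything else is bookkeeping, provided one is careful that the $i=0$ term of the sum matches the stated $\tfrac{1}{\theta_{-1}^2}$ convention and that the expectation may be exchanged with the deterministic convex weights.
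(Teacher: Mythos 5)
Your proposal is correct and follows essentially the same route as the paper: invoke Theorem~3 of the APPROX paper for \eqref{eq:itcompl_approx}, isolate $\Exp[\norm{z_i-x_*}_v^2]$, use the convex-combination representation $x_k=\sum_i\gamma_k^i z_i$ with convexity of the squared norm, and peel off the $i=k$ term via $\gamma_k^k/\theta_{k-1}^2=1/(\theta_0\theta_{k-1})$. The only difference is that you re-derive the representation and the nonnegativity of the $\gamma_k^i$ from the APPROX recursion, whereas the paper simply cites Lemma~2 of \cite{FR:2013approx} for these facts; your derivation is correct and makes the argument self-contained.
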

\begin{proof}
Inequality \eqref{eq:itcompl_approx} is just Theorem~3 of~\cite{FR:2013approx}. For the second inequality, 
we first isolate $\Exp[\norm{z_{k} - x_*}_v^2]$ in \eqref{eq:itcompl_approx}. For all $k\geq 0$, 
\begin{equation}\label{a:isolatezk}
\frac{1}{2\theta_0^2} \Exp[\norm{z_{k} - x_*}_v^2] \leq \frac{1 - \theta_0}{\theta_{0}^2}(F(x_{0}) - F(x_*)) + \frac{1}{2\theta_0^2}\norm{x_{0} - x_*}_v^2 - \frac{1}{\theta_{k-1}^2}\Exp[F(x_{k}) - F(x_*)] .
\end{equation}
Then, we use the fact, proved in Lemma 2 of \cite{FR:2013approx}, that  $\gamma_k^i \geq 0$, $\sum_{i=0}^k \gamma_k^i = 1$  and $x_k =  \sum_{i=0}^k \gamma_k^i z_i$. Therefore,
\begin{align}\label{a:xtoz}
\frac{1}{2\theta_0^2}\Exp[\norm{x_k - x_*}_v^2] =\frac{1}{2\theta_0^2}\Exp\left[\norm{\sum_{i=0}^k \gamma_k^i (z_i - x_*)}_v^2\right]  \leq \frac{1}{2\theta_0^2}\sum_{i=0}^k \gamma^i_k \Exp[ \norm{z_i - x_*}_v^2] 
\end{align}
Plugging~\eqref{a:isolatezk} into~\eqref{a:xtoz} we get:
\begin{align*}& 
\frac{1}{2\theta_0^2}\Exp[\norm{x_k - x_*}_v^2] \\ 
&\leq  \sum_{i=0}^{k} \gamma^i_k \left( \frac{1 - \theta_0}{\theta_{0}^2}(F(x_{0}) - F(x_*)) + \frac{1}{2\theta_0^2}\norm{x_{0} - x_*}_v^2 
  - \frac{1}{\theta_{i-1}^2}\Exp[F(x_{i}) - F(x_*)]  \right)  \label{eq:oldzs} 
\\  &=  \frac{1 - \theta_0}{\theta_{0}^2}(F(x_{0}) - F(x_*)) + \frac{1}{2\theta_0^2}\norm{x_{0} - x_*}_v^2 
  -\sum_{i=0}^{k-1} \frac{\gamma_k^i}{\theta_{i-1}^2}\Exp[F(x_{i}) - F(x_*)] - \frac{\gamma_k^k}{\theta_{k-1}^2}\Exp[F(x_{k}) - F(x_*)]
\end{align*}
and we deduce~\eqref{eq:stability_approx} using $\gamma_{k}^k=\theta_{k-1}/\theta_0$.
\end{proof}

\begin{remark} \normalfont
The strength of these propositions is that they are independent of the strong convexity parameter. Indeed,  APG, FISTA and APPROX work for non-strongly convex minimization.
\end{remark}

\begin{remark} \normalfont
As APPROX generalizes APG, we have covered all three algorithms in the two propositions. A remarkable feature is that
the result for FISTA and APG are exactly the same even though 
the algorithms are different. 
\end{remark}

\section{Restarted gradient methods}
\label{sec:restart}
The basic tool upon which we build our restarting rule is
a contraction property.
We first present two restarting rules that require a special condition in order to guarantee the linear convergence. 
Then we present new rules that are more complex
but are always certified to give a linearly convergent algorithm.
\subsection{Conditional restarting}

The first rule is an extension of the ``optimal fixed restart''
of \cite{nesterov2013gradient,o2012adaptive} to FISTA and APPROX.

\begin{proposition}[Conditional restarting at $x_k$]
Let $(x_k, z_k)$ be the iterates of FISTA or APPROX applied to~\eqref{eq-prob}. We have
\[
\Exp[F(x_k) - F(x_*)]\leq
\theta_{k-1}^2 \left(\frac{1 - \theta_0}{\theta_{0}^2} + \frac{\theta_0^2}{\mu_F(v)} \right)(F(x_{0}) - F(x_*)) .
\]
Moreover, given $\alpha < 1$, if 
\begin{align}\label{a:conditionalresxk}
k \geq \frac{2}{\theta_0}\left(\sqrt{\frac{1+\mu_F(v)}{\alpha\mu_F(v)}}-1\right) + 1 ,
\end{align}
then $\Exp[F(x_k) - F(x_*)] \leq \alpha (F(x_{0}) - F(x_*))$.
\label{prop:restart_x}
\end{proposition}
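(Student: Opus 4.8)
The plan is to derive both claims directly from the basic convergence estimates, Proposition~\ref{prop:fista_basic} and Proposition~\ref{prop:approx_basic}, treating FISTA as the special case $\theta_0 = 1$ of the APPROX bound. Indeed, setting $\theta_0 = 1$ in~\eqref{eq:itcompl_approx} makes $\frac{1-\theta_0}{\theta_0^2}$ vanish and $\frac{1}{2\theta_0^2}$ equal to $\frac12$, which is exactly~\eqref{eq:itcompl_fista} (the expectations being vacuous since FISTA is deterministic). So it suffices to argue from~\eqref{eq:itcompl_approx}. First I would discard the nonnegative term $\frac{1}{2\theta_0^2}\Exp[\norm{z_k - x_*}_v^2]$ on its left-hand side, leaving
\[
\frac{1}{\theta_{k-1}^2}\Exp[F(x_k) - F(x_*)] \leq \frac{1-\theta_0}{\theta_0^2}(F(x_0) - F(x_*)) + \frac{1}{2\theta_0^2}\norm{x_0 - x_*}_v^2 .
\]

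The conceptual crux is then to eliminate the distance term $\norm{x_0 - x_*}_v^2$ in favour of an objective gap, so that both sides become multiples of $F(x_0) - F(x_*)$ and a genuine contraction factor emerges. This is precisely what the strong convexity assumption~\eqref{a:strconv} provides: applied at $x = x_0$ it gives $\frac12\norm{x_0 - x_*}_v^2 \leq \frac{1}{\mu_F(v)}(F(x_0) - F(x_*))$. Substituting into the displayed inequality and multiplying through by $\theta_{k-1}^2$ yields
\[
\Exp[F(x_k) - F(x_*)] \leq \theta_{k-1}^2\Big(\tfrac{1-\theta_0}{\theta_0^2} + \tfrac{1}{\theta_0^2\mu_F(v)}\Big)(F(x_0) - F(x_*)),
\]
which is the first claimed inequality (for $\theta_0=1$ this is the stated FISTA constant). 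For the explicit iteration count, write $C$ for the bracketed constant and overestimate it, using $1-\theta_0 \leq 1$, by $C \leq \frac{1+\mu_F(v)}{\theta_0^2\mu_F(v)}$. Invoking $\theta_{k-1} \leq \frac{2}{(k-1)+2/\theta_0}$ from~\eqref{athetabd}, a sufficient condition for $\theta_{k-1}^2 C \leq \alpha$ is $\big((k-1)+2/\theta_0\big)^2 \geq 4C/\alpha$, that is $k \geq 2\sqrt{C/\alpha} - 2/\theta_0 + 1$. Inserting the upper bound on $C$ shows this is implied by~\eqref{a:conditionalresxk}, and the first inequality then gives $\Exp[F(x_k) - F(x_*)] \leq \theta_{k-1}^2 C\,(F(x_0)-F(x_*)) \leq \alpha(F(x_0)-F(x_*))$.

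The argument is mostly mechanical, so I do not anticipate a serious obstacle; the delicate points are purely bookkeeping. One must apply~\eqref{athetabd} at the shifted index $k-1$ rather than $k$, and one must choose the clean overestimate $C \leq \frac{1+\mu_F(v)}{\theta_0^2\mu_F(v)}$ so that the resulting threshold on $k$ lands exactly on~\eqref{a:conditionalresxk}. The genuinely essential step, rather than an obstacle, is the invocation of~\eqref{a:strconv} to trade the squared distance $\norm{x_0 - x_*}_v^2$ for the objective gap $F(x_0)-F(x_*)$: this is what turns the non-contractive $O(1/k^2)$ estimate into a bound in which the optimality gap is multiplied by a factor that can be driven below $1$, and hence is the mechanism that makes periodic restarting effective.
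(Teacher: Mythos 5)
Your proof is correct and follows essentially the same route as the paper's: drop the $z_k$ term in \eqref{eq:itcompl_approx} (with \eqref{eq:itcompl_fista} as the $\theta_0=1$ case), trade $\frac12\norm{x_0-x_*}_v^2$ for $\frac{1}{\mu_F(v)}(F(x_0)-F(x_*))$ via \eqref{a:strconv}, and then bound $\theta_{k-1}$ by \eqref{athetabd} after overestimating $1-\theta_0$ by $1$ to land on \eqref{a:conditionalresxk}. Note that the constant you obtain, $\frac{1}{\theta_0^2\mu_F(v)}$, is exactly what the paper's own proof derives, rather than the $\frac{\theta_0^2}{\mu_F(v)}$ printed in the proposition statement, which appears to be a typo there.
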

\begin{proof}
By~\eqref{eq:itcompl_fista} and~\eqref{eq:itcompl_approx}, the following holds for the iterates of FISTA ($\theta_0=1$) and APPROX:
\begin{align*}
\Exp[F(x_k) - F(x_*)]& {\leq}
\theta_{k-1}^2 \left(\frac{1 - \theta_0}{\theta_{0}^2}(F(x_{0}) - F(x_*)) + \frac{1}{2\theta_0^2}\norm{x_{0} - x_*}_v^2 \right) \\
& \overset{\eqref{a:strconv}}{\leq} 
\theta_{k-1}^2 \left(\frac{1 - \theta_0}{\theta_{0}^2} + \frac{1}{\mu_F(v)\theta_0^2} \right)(F(x_{0}) - F(x_*)). 
\end{align*}
Condition~\eqref{a:conditionalresxk} is equivalent to:
$$
\frac{4}{(k-1+2/\theta_0)^2}\left(\frac{1 }{\theta_{0}^2} + \frac{1}{\mu_F(v)\theta_0^2} \right) \leq \alpha,
$$
and we have the contraction using~\eqref{athetabd}.
\end{proof}
\begin{remark} \normalfont
Notice that the restarting rule~\eqref{a:conditionalresxk} requires to know a lower bound on the strong convexity coefficient of $F$.
\end{remark}
The next restarting rule is built upon a comparison condition and does not rely on any estimation of $\mu_F(v)$.
\begin{proposition}[Conditional restarting at $z_k$]
Let $(x_k, z_k)$ be the iterates of FISTA or APG applied to~\eqref{eq-prob}.
If $F(z_k) \leq F(x_k)$, then 
\begin{equation}
 \frac{1}{2}\norm{z_{k} - x_*}_v^2 \leq \frac{1}{\left(1+\frac{\mu_F(v)}{\theta_{k-1}^2} \right)}\frac{1}{2}\norm{z_{0} - x_*}_v^2.
\end{equation}
\label{prop:restart_z}
\end{proposition}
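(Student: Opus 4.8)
The plan is to combine the basic contraction inequality for FISTA/APG, namely \eqref{eq:itcompl_fista}, with the strong convexity assumption \eqref{a:strconv}, using the comparison hypothesis $F(z_k) \leq F(x_k)$ as the bridge between the two. Note first that \eqref{eq:itcompl_fista} applies to both FISTA and APG: by the remark following Proposition~\ref{prop:approx_basic}, the basic bound is identical for the two algorithms, and both are initialized with $\theta_0 = 1$ and $z_0 = x_0$.

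The starting point is \eqref{eq:itcompl_fista},
\[
\frac{1}{\theta_{k-1}^2}(F(x_{k}) - F(x_*)) + \frac{1}{2}\norm{z_{k} - x_*}_v^2 \leq \frac{1}{2}\norm{x_{0} - x_*}_v^2,
\]
where, because $z_0 = x_0$, the right-hand side equals $\tfrac12\norm{z_0 - x_*}_v^2$. The decisive step is to produce a lower bound on the objective gap $F(x_k) - F(x_*)$ in terms of $\norm{z_k - x_*}_v^2$ rather than $\norm{x_k - x_*}_v^2$. This is where the comparison condition enters: applying \eqref{a:strconv} at the point $z_k$ gives $F(z_k) - F(x_*) \geq \tfrac{\mu_F(v)}{2}\norm{z_k - x_*}_v^2$, and the hypothesis $F(z_k) \leq F(x_k)$ then transfers this estimate to $x_k$,
\[
F(x_k) - F(x_*) \geq F(z_k) - F(x_*) \geq \frac{\mu_F(v)}{2}\norm{z_k - x_*}_v^2.
\]

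I would then substitute this inequality into the first term of \eqref{eq:itcompl_fista}. Both terms on the left-hand side are now multiples of $\tfrac12\norm{z_k - x_*}_v^2$, so factoring gives
\[
\left(1 + \frac{\mu_F(v)}{\theta_{k-1}^2}\right)\frac{1}{2}\norm{z_{k} - x_*}_v^2 \leq \frac{1}{2}\norm{z_{0} - x_*}_v^2,
\]
and dividing through by the factor yields the claimed contraction. I do not expect any genuine obstacle here: the argument is a short chaining of three inequalities. The one point worth emphasizing is the role of the comparison condition $F(z_k) \leq F(x_k)$. Without it, the natural strong-convexity bound obtained directly from \eqref{a:strconv} controls $\norm{x_k - x_*}_v^2$, which does not appear in \eqref{eq:itcompl_fista} and hence cannot be merged with the $\norm{z_k - x_*}_v^2$ term; the hypothesis is precisely what lets us couple the strong-convexity estimate at $z_k$ with the gap term $F(x_k) - F(x_*)$ of the FISTA/APG inequality.
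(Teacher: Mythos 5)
Your proof is correct and follows essentially the same route as the paper: the paper likewise starts from \eqref{eq:itcompl_fista} (and \eqref{eq:itcompl_approx} with $\theta_0=1$ for APG), uses the hypothesis $F(z_k)\leq F(x_k)$ to replace $F(x_k)$ by $F(z_k)$ in the gap term, and then applies \eqref{a:strconv} at $z_k$ before factoring. The only difference is the order in which the comparison condition and the strong convexity bound are chained, which is immaterial.
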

\begin{proof}
By~\eqref{eq:itcompl_fista} and~\eqref{eq:itcompl_approx}, the following holds for the iterates of FISTA and APG ($\theta_0=1$):
\begin{equation}\label{eq:FISTAAPG}
\frac{1}{\theta_{k-1}^2} (F(z_{k}) - F(x_*)) + \frac{1}{2}\norm{z_{k} - x_*}_v^2 \leq \frac{1}{2}\norm{x_{0} - x_*}_v^2 
\end{equation}
By~\eqref{a:strconv}, we get
\[
\left(\frac{\mu_F(v)}{2 \theta_{k-1}^2} + \frac{1}{2}\right)\norm{z_{k} - x_*}_v^2 \leq \frac{1}{2}\norm{x_{0} - x_*}_v^2= \frac{1}{2}\norm{z_{0} - x_*}_v^2. 
\]
\end{proof}
Here, we do not need to know $\mu_F(v)$ but we need to wait for $F(z_k)$ to be smaller than $F(x_k)$. This event does happen
 sometimes but there is no guarantee for it to happen when minimizing a given function. Hence we may wait for ever and never restart.

\subsection{Unconditional restarting}
In this section, we prove the main results of this paper: setting a convex combination of the past iterates as the restarting point leads to a linearly convergent restarted method. 

We first show that for full-gradient accelerated methods, an arbitrary strict convex combination of the last iterates $x_k$ and $z_k$ works.
\begin{theorem}[Restarting for FISTA and APG]
Let $(x_k, z_k)$ be the iterates of FISTA or APG applied to~\eqref{eq-prob}. Let $\sigma \in [0,1]$  and $\bar x_k = (1-\sigma) x_k + \sigma z_k$. We have
\[
\frac 12 \norm{\bar x_k- x_*}^2_v \leq \frac 12 \max\left(\sigma,1- \frac{\sigma \mu_F(v)}{\theta_{k-1}^2}\right)\norm{x_0 - x_*}^2_v.
\]
\label{th:restart_sigma}
\end{theorem}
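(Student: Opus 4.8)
The plan is to bound $\tfrac12\norm{\bar x_k - x_*}_v^2$ by first peeling off the convex combination and then playing the two estimates available for FISTA/APG — a contraction inequality and a stability inequality — against each other. Since $\bar x_k - x_* = (1-\sigma)(x_k - x_*) + \sigma(z_k - x_*)$ and $\sigma \in [0,1]$, convexity of the squared norm $\norm{\cdot}_v^2$ gives
\[
\tfrac12\norm{\bar x_k - x_*}_v^2 \le (1-\sigma)\,\tfrac12\norm{x_k - x_*}_v^2 + \sigma\,\tfrac12\norm{z_k - x_*}_v^2 ,
\]
so the whole argument reduces to controlling the two summands.

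For the $z_k$ term I would combine the contraction inequality with strong convexity. Both algorithms obey $\frac{1}{\theta_{k-1}^2}(F(x_k) - F(x_*)) + \tfrac12\norm{z_k - x_*}_v^2 \le \tfrac12\norm{x_0 - x_*}_v^2$: for FISTA this is exactly \eqref{eq:itcompl_fista}, and for APG it is \eqref{eq:itcompl_approx} specialized to $\theta_0 = 1$ (APG being APPROX with $\hat S = [n]$, so the term $\frac{1-\theta_0}{\theta_0^2}$ vanishes and no expectation appears, the method being deterministic). Applying \eqref{a:strconv} \emph{at the point $x_k$}, i.e. $F(x_k) - F(x_*) \ge \frac{\mu_F(v)}{2}\norm{x_k - x_*}_v^2$, yields
\[
\tfrac12\norm{z_k - x_*}_v^2 \le \tfrac12\norm{x_0 - x_*}_v^2 - \frac{\mu_F(v)}{2\theta_{k-1}^2}\norm{x_k - x_*}_v^2 .
\]
Substituting into the convexity bound collapses everything onto $\norm{x_k - x_*}_v^2$ and $\norm{x_0 - x_*}_v^2$:
\[
\tfrac12\norm{\bar x_k - x_*}_v^2 \le \Big(1 - \sigma - \frac{\sigma\mu_F(v)}{\theta_{k-1}^2}\Big)\tfrac12\norm{x_k - x_*}_v^2 + \sigma\,\tfrac12\norm{x_0 - x_*}_v^2 .
\]

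The final step is a two-case argument on the sign of the coefficient $c := 1 - \sigma - \frac{\sigma\mu_F(v)}{\theta_{k-1}^2}$, which is precisely what generates the $\max$ in the statement. If $c \ge 0$, I bound $\norm{x_k - x_*}_v^2$ above by $\norm{x_0 - x_*}_v^2$ using the stability inequality \eqref{eq:stability_fista} for FISTA, respectively \eqref{eq:stability_approx} with $\theta_0 = 1$ for APG (where the subtracted nonnegative terms are simply dropped); the two $\norm{x_0 - x_*}_v^2$ contributions then combine into the factor $1 - \frac{\sigma\mu_F(v)}{\theta_{k-1}^2}$. If $c < 0$, I discard the (now nonpositive) $\norm{x_k - x_*}_v^2$ term, leaving the factor $\sigma$. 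Taking the maximum over the two cases gives the claim. I expect the only real obstacle to be bookkeeping rather than conceptual difficulty: one must check that APG genuinely inherits both the contraction and the stability inequalities from the APPROX propositions at $\theta_0 = 1$, and one must apply \eqref{a:strconv} at $x_k$ (not $z_k$) so that it feeds correctly into \eqref{eq:itcompl_fista}/\eqref{eq:itcompl_approx}; the case split on the sign of $c$ is what mechanically produces the two entries of the $\max$, so organizing the proof around it keeps it short.
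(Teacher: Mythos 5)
Your proposal is correct and follows essentially the same route as the paper's proof: convexity of $\norm{\cdot}_v^2$ to split $\bar x_k$, the contraction inequality \eqref{eq:itcompl_fista} (resp.\ \eqref{eq:itcompl_approx} with $\theta_0=1$) combined with \eqref{a:strconv} applied at $x_k$, the stability inequality \eqref{eq:stability_fista} (resp.\ \eqref{eq:stability_approx}), and a sign split on $1-\sigma-\sigma\mu_F(v)/\theta_{k-1}^2$ producing the $\max$. The paper merely performs the algebraic regrouping before invoking the two inequalities rather than after, so the intermediate expressions coincide and no gap remains.
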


\begin{proof}
By the definition of $\bar x_k$:
\begin{align*}
&\frac 12 \norm{\bar x_k - x_*}_v^2 {\leq} \frac{1-\sigma}{2} \norm{x_k - x_*}_v^2 +  \frac{\sigma}{2} \norm{z_k - x_*}_v^2\\
&=  \left(1-\sigma - \frac{\sigma \mu_F(v)}{\theta_{k-1}^2}\right)\frac{1}{2} \norm{x_k - x_*}_v^2 +  \frac{\sigma}{\theta_{k-1}^2}\left(\frac{\mu_F(v)}{2} \norm{x_k - x_*}_v^2+\frac{\theta_{k-1}^2}{2} \norm{z_k - x_*}_v^2\right) \\
& \overset{\eqref{a:strconv}}{\leq}  \max\left(0,1-\sigma -\frac{\sigma  \mu_F(v) }{\theta_{k-1}^2}\right)\frac{1}{2} \norm{x_k - x_*}_v^2 + \frac{\sigma}{\theta_{k-1}^2}\left(  F(x_k)-F(x_*)+\frac{\theta_{k-1}^2}{2} \norm{z_k - x_*}_v^2\right) \end{align*}
Next we apply~\eqref{eq:itcompl_fista} and~\eqref{eq:stability_fista} (the same holds for APG by taking $\theta_0=1$ in~\eqref{eq:itcompl_approx} and~\eqref{eq:stability_approx}):
\begin{align*}\frac 12 \norm{\bar x_k - x_*}_v^2 
&{\leq} 
\max\left(0,1-\sigma- \frac{\sigma\mu_F(v)}{\theta_{k-1}^2}\right)\frac{1}{2} \norm{x_0 - x_*}_v^2 +  \frac{\sigma}{2} \norm{x_0 - x_*}_v^2\\
&= \max\left(\sigma,1- \frac{\sigma\mu_F(v)}{\theta_{k-1}^2}\right)\frac{1}{2}\norm{x_0 - x_*}^2_v. 
\end{align*}
\end{proof}

For APPROX, we need a more complex restarting point.
\begin{theorem}[Restarting for APPROX]
Let $\gamma^i_k$ be the coefficients defined in \eqref{eq:gammas} and 
\begin{equation}
\label{eq:center_approx}
\mathring x_k = \frac{1}{\sum_{i=0}^{k-1}\frac{\gamma_k^i}{\theta_{i-1}^2}+ \frac{1}{\theta_0 \theta_{k-1}} -\frac{1 - \theta_0}{\theta_{0}^2} }\left( \sum_{i=0}^{k-1}\frac{\gamma_k^i}{\theta_{i-1}^2}x_i+ \left(\frac{1}{\theta_0 \theta_{k-1}} -\frac{1 - \theta_0}{\theta_{0}^2} \right)x_k \right)
\end{equation}
be a convex combination of the $k$ first iterates of APPROX. Let $\sigma \in [0,1]$ and $\bar x_k = \sigma x_k + (1-\sigma) \mathring x_k$.
Denote $$\Delta(x) := \frac{1 - \theta_0}{\theta_{0}^2}(F(x) - F(x_*)) + \frac{1}{2\theta_0^2}\norm{x - x_*}_v^2$$
and 
\begin{align}\label{amkmu}
m_k(\mu) := \frac{\mu \theta_0^2}{1+ \mu (1 - \theta_0)}\left(\sum_{i=0}^{k-1}\frac{\gamma_k^i}{\theta_{i-1}^2}+ \frac{1}{\theta_0 \theta_{k-1}} -\frac{1 - \theta_0}{\theta_{0}^2}\right).
\end{align}
We have
\begin{align*}
\Exp[\Delta(\bar x_k)] \leq \max\left(\sigma, 1 -\sigma m_k(\mu_F(v))\right) \Delta(x_0).
\end{align*}
\label{th:restart_approx}
\end{theorem}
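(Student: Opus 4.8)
The plan is to reduce the statement to two scalar estimates for $\Exp[\Delta(\cdot)]$, one at $x_k$ and one at the auxiliary point $\mathring x_k$, and then to exploit convexity of $\Delta$ together with a case analysis that reproduces the two branches of the maximum. The starting observation is that $\Delta$ is a convex function of its argument, being a nonnegative combination of $F$ and $\norm{\,\cdot - x_*}_v^2$; hence from $\bar x_k = \sigma x_k + (1-\sigma)\mathring x_k$ one gets immediately
\[
\Exp[\Delta(\bar x_k)] \leq \sigma\,\Exp[\Delta(x_k)] + (1-\sigma)\,\Exp[\Delta(\mathring x_k)].
\]
Everything then hinges on controlling the two terms on the right.

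The heart of the argument is the contraction identity $\Exp[\Delta(x_k)] + m_k(\mu_F(v))\,\Exp[\Delta(\mathring x_k)] \leq \Delta(x_0)$. I would start from the stability bound \eqref{eq:stability_approx}, whose left-hand side is exactly $\Exp[\Delta(x_k)]$. The point is that the coefficients in \eqref{eq:center_approx} are engineered so that the subtracted terms $\sum_{i=0}^{k-1}\tfrac{\gamma_k^i}{\theta_{i-1}^2}\Exp[F(x_i)-F(x_*)] + (\tfrac{1}{\theta_0\theta_{k-1}} - \tfrac{1-\theta_0}{\theta_0^2})\Exp[F(x_k)-F(x_*)]$ equal $A_k$ times a convex average of the gaps $F(x_i)-F(x_*)$, where $A_k := \sum_{i=0}^{k-1}\tfrac{\gamma_k^i}{\theta_{i-1}^2} + \tfrac{1}{\theta_0\theta_{k-1}} - \tfrac{1-\theta_0}{\theta_0^2}$ is the normalizing constant appearing in \eqref{eq:center_approx}. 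Since the weights are deterministic and $F$ is convex, Jensen's inequality bounds this subtracted quantity below by $A_k\,\Exp[F(\mathring x_k)-F(x_*)]$. Finally I would apply the strong convexity \eqref{a:strconv} at the single point $\mathring x_k$, namely $\norm{\mathring x_k - x_*}_v^2 \leq \tfrac{2}{\mu_F(v)}(F(\mathring x_k)-F(x_*))$; inserting this into the definition of $\Delta(\mathring x_k)$ yields $A_k(F(\mathring x_k)-F(x_*)) \geq m_k(\mu_F(v))\,\Delta(\mathring x_k)$, which is precisely where the formula \eqref{amkmu} for $m_k$ originates.

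Next I would establish the auxiliary stability estimate $\Exp[\Delta(\mathring x_k)] \leq \Delta(x_0)$. Dropping the nonnegative subtracted terms in \eqref{eq:stability_approx} gives $\Exp[\Delta(x_i)] \leq \Delta(x_0)$ for every $i$, and since $\mathring x_k$ is a convex combination of $x_0,\dots,x_k$ with deterministic weights, convexity of $\Delta$ propagates the bound. With both estimates in hand I substitute the contraction identity into the convexity decomposition to obtain $\Exp[\Delta(\bar x_k)] \leq \sigma\Delta(x_0) + (1 - \sigma - \sigma m_k)\,\Exp[\Delta(\mathring x_k)]$, and I split on the sign of $1 - \sigma(1 + m_k)$: when it is nonpositive the last term is dropped, giving the bound $\sigma\Delta(x_0)$; when it is nonnegative the estimate $\Exp[\Delta(\mathring x_k)] \leq \Delta(x_0)$ gives $(1-\sigma m_k)\Delta(x_0)$. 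A short check shows these two regimes correspond exactly to which of $\sigma$ and $1-\sigma m_k$ is larger, so the two branches assemble into the maximum.

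The main obstacle is the contraction step: one must recognize that the rather opaque convex combination \eqref{eq:center_approx} is chosen so that the error terms subtracted in the APPROX stability inequality reassemble, after Jensen, into $A_k(F(\mathring x_k)-F(x_*))$, and then that the specific weighting defining $m_k(\mu)$ in \eqref{amkmu} is exactly the output of the strong-convexity trade-off between the function gap and the squared distance. Once this alignment is seen, the remaining manipulations are elementary.
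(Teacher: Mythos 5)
Your proposal is correct and follows essentially the same route as the paper's proof: the same contraction inequality $\Exp[\Delta(x_k)] + m_k(\mu_F(v))\Exp[\Delta(\mathring x_k)] \leq \Delta(x_0)$ obtained from \eqref{eq:stability_approx} via Jensen on the convex combination defining $\mathring x_k$ and then strong convexity, the same auxiliary bound $\Exp[\Delta(\mathring x_k)] \leq \Delta(x_0)$, and the same splitting on the sign of $1-\sigma-\sigma m_k$ to assemble the maximum. No gaps.
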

\begin{proof}
Note that by~\eqref{atheradecr},
$$
\frac{1}{\theta_0 \theta_{k-1}}\geq \frac{1}{\theta_0^2}\geq \frac{1 - \theta_0}{\theta_{0}^2}.
$$
Hence $\mathring x_k$ is a convex combination of $\{x_0,\dots,x_k\}$.
By~\eqref{eq:stability_approx} and the definition of $\mathring x_k$,
\begin{align*}
\Delta(x_0) & \geq \Exp[\Delta(x_k)] + \sum_{i=0}^{k-1} \frac{ \gamma^i_k}{\theta_{i-1}^2}\Exp[F(x_{i}) - F(x_*)] + \left( \frac{1}{\theta_0 \theta_{k-1}} -\frac{1 - \theta_0}{\theta_{0}^2}\right)\Exp[F(x_{k}) - F(x_*)] \\
& \geq  \Exp[\Delta(x_k)]  + \left(\sum_{i=0}^{k-1}\frac{\gamma_k^i}{\theta_{i-1}^2}+ \frac{1}{\theta_0 \theta_{k-1}} -\frac{1 - \theta_0}{\theta_{0}^2}\right) \Exp[ F(\mathring x_k) - F(x_*)] \\
\end{align*}
In view of the strong convexity assumption~\eqref{a:strconv},
\begin{align*}
&\Delta(x) = \frac{1 - \theta_0}{\theta_{0}^2}(F(x) - F(x_*)) + \frac{1}{2\theta_0^2}\norm{x - x_*}_v^2 \leq \left(\frac{1 - \theta_0}{\theta_{0}^2}+ \frac{1}{\mu_F(v) \theta_0^2}\right) (F(x) - F(x_*))  
\end{align*}
Therefore,
\begin{align*}
\Delta(x_0) 
& \geq \Exp[\Delta(x_k)]  + \frac{\mu_F(v)\theta_0^2 }{1 + \mu_F(v) (1-\theta_0)}  \left(\sum_{i=1}^{k-1}\frac{\gamma_k^i}{\theta_{i-1}^2}+\frac{1}{\theta_0 \theta_{k-1}} -\frac{1 - \theta_0}{\theta_{0}^2}\right) \Exp[ \Delta(\mathring x_k) ] \\
&\overset{\eqref{amkmu}}{ =} \Exp[\Delta(x_k)]  + m_k(\mu_F(v)) \Exp[ \Delta(\mathring x_k) ] 
\end{align*}
Moreover, using \eqref{eq:stability_approx} again, 
we can easily see that $\Exp[\Delta(x_i)] \leq \Delta(x_0)$ 
for all $i$ and thus  $\Exp[ \Delta(\mathring x_k) ] \leq \Delta(x_0)$.
Let us now consider $\bar x_k = \sigma x_k + (1-\sigma) \mathring x_k$.
\begin{align*}
\Exp[\Delta(&\bar x_k)] \leq \sigma \Exp[\Delta(x_k)] + (1-\sigma) \Exp[\Delta(\mathring x_k)] \\
&= \sigma \Exp[\Delta(x_k)] + \sigma m_k(\mu_F(v)) \Exp[\Delta(\mathring x_k)]
+  \left(1-\sigma - \sigma m_k(\mu_F(v)) \right)\Exp[\Delta(\mathring x_k)] \\
&\leq \sigma \left(\Exp[\Delta(x_k)] +  m_k(\mu_F(v)) \Exp[\Delta(\mathring x_k)]\right)
+ \max\left( 0,  1- \sigma - \sigma m_k(\mu_F(v)) \right)\Exp[\Delta(\mathring x_k)] \\
& \leq \sigma  \Delta(x_0)
+ \max\left( 0,  1 - \sigma - \sigma m_k(\mu_F(v)) \right)\Delta(x_0)\\
& =   \max\left( \sigma ,1 - \sigma m_k(\mu_F(v)) \right)\Delta(x_0) 
\end{align*}
\end{proof}

\subsection{Restarted APPROX}
We describe in Algorithm~\ref{algo:restartedFGMv2} the restarted APPROX method and give the convergence result in Theorem~\ref{th:sigmaK}.

\begin{algorithm}
\begin{algorithmic}[h!]
\STATE Choose $x_0 \in \R^n$, set $z_0=x_0$ and $\theta_0=\frac{\tau}{n}$.
\STATE Choose $\sigma \in (0,1)$ and $K \in \mathbb{N}$.
\FOR{ $k \geq 0$}
\STATE $y_k= (1-\theta_k) x_k+ \theta_k z_k$
\STATE Generate a random set of coordinates $S_k\sim \hat{S}$
\FOR{ $i \in S_k$}
\STATE $z_{k+1}^i = \arg\min_{z \in \R} \left\{ \langle \nabla_i f (y_k), z- y_k^i \rangle + \frac{\theta_k n v_i }{2\tau} \|z- z_k^i \|_{v}^2 + \Reg^i(z)\right\}$
\ENDFOR 
\STATE $x_{k+1} =y_k +\frac{n}{\tau}\theta_k(z_{k+1}- z_k )$
\STATE $\theta_{k+1}= \frac{\sqrt{\theta_k^4+4\theta_k^2}-\theta_k^2}{2}$
\IF{ $k \equiv 0 \; \mathrm{mod} \; 
K
$}\label{line:testrestarting2}
\STATE $x_{k+1} \leftarrow \bar x_{k+1}$ \quad ($\bar x_{k+1}$ is defined in Theorem~\ref{th:restart_approx})
\STATE $z_{k+1} \leftarrow \bar x_{k+1}$
\STATE $\theta_{k+1} \leftarrow \theta_0$
\ENDIF
\ENDFOR 
\end{algorithmic}
\caption{APPROX+restart}
\label{algo:restartedFGMv2}
\end{algorithm}
\begin{remark} \normalfont
For this restarting rule to be useful, we need to be able
to compute $\mathring{x}_k$ efficiently, in particular without computing $x_i$ for $ i < k$.
A way to do this is to use the variable $w_i = \theta_{i-1}^{-2}(x_i - z_i)$, which is maintained up-to-date in the algorithm:
\[
\sum_{i=0}^{k-1}\frac{\gamma_k^i}{\theta_{i-1}^2}x_i = 
\sum_{i=0}^{k-1}\frac{\gamma_k^i}{\theta_{i-1}^2}z_i + \gamma_k^i w_i.
\]
Then, we can compute the sum using cumulative updates like
 in~\cite{dang2015stochastic}. We develop this idea in Appendix~\ref{sec:impl}.
\end{remark}
\begin{theorem}\label{th:sigmaK}
Let us choose $K \in \mathbb{N}$ and $\sigma \in (0,1)$ as we wish. Using the notation defined in Theorem~\ref{th:restart_approx}, the iterates of Algorithm~\ref{algo:restartedFGMv2} satisfy for any $k \geq K$
\begin{align*}&
\Exp\left[ \Delta(x_k)\right] 
\leq   \left(\max\left( \sigma, 1 -\sigma m_K(\mu_F(v)) \right)^{1/K}\right)^{k-K}\Delta(x_0).
\end{align*}
\end{theorem}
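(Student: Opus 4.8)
The plan is to read Algorithm~\ref{algo:restartedFGMv2} as a concatenation of independent ``fresh'' runs of APPROX, each of length $K$, spliced together at the restart points, and to apply Theorem~\ref{th:restart_approx} once per block. Write $\rho := \max\!\big(\sigma,\, 1 - \sigma m_K(\mu_F(v))\big)$ for the per-block contraction factor. Since $\sigma \in (0,1)$ and $m_K(\mu_F(v)) > 0$ (because $\mu_F(v) > 0$ and the bracketed factor in \eqref{amkmu} is strictly positive, as observed in the proof of Theorem~\ref{th:restart_approx}), we have $0 < \rho < 1$; this strict inequality is what will later let me trade the integer number of completed blocks for the continuous exponent in the statement.

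First I would fix the block structure. Whenever the test on line~\ref{line:testrestarting2} fires, the algorithm replaces the current pair $(x,z)$ by the convex combination $\bar x$ of Theorem~\ref{th:restart_approx} and resets $\theta \leftarrow \theta_0$. Hence each maximal stretch of iterations between two restarts is exactly $K$ steps of APPROX started from the previous restart point with $\theta$ reinitialised to $\theta_0$, and the point obtained at the end of such a stretch is precisely the $\bar x_K$ of Theorem~\ref{th:restart_approx} built over that stretch. Let $p_0 = x_0, p_1, p_2, \dots$ denote the successive restart points, $p_j$ being produced after the $j$-th completed block. Applying Theorem~\ref{th:restart_approx} to the block issued from $p_{j-1}$ gives, conditionally on $p_{j-1}$,
\[
\Exp\big[\Delta(p_j)\mid p_{j-1}\big] \le \rho\,\Delta(p_{j-1}).
\]

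The central step is to chain these single-block estimates. After the first block $p_{j-1}$ is itself a random point, so the contraction of Theorem~\ref{th:restart_approx}, which is stated for a deterministic initial point, must be invoked conditionally and then combined through the tower property rather than multiplied naively; I expect this to be the main obstacle. Concretely, taking total expectations in the display and feeding in the induction hypothesis $\Exp[\Delta(p_{j-1})] \le \rho^{\,j-1}\Delta(x_0)$ yields $\Exp[\Delta(p_j)] \le \rho^{\,j}\Delta(x_0)$, so by induction $\Exp[\Delta(p_j)] \le \rho^{\,j}\Delta(x_0)$ for every $j \ge 0$.

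Finally I would treat a general $k \ge K$, which typically lands strictly inside a block. Writing $k = jK + r$ with $j = \floor{k/K} \ge 1$ and $0 \le r < K$, the iterate $x_k$ is the $r$-th point of the fresh APPROX run started at $p_j$. The inequality $\Exp[\Delta(x_i)] \le \Delta(x_0)$ recorded in the proof of Theorem~\ref{th:restart_approx} says that $\Delta$ is non-increasing in expectation along a block, so conditionally on $p_j$ we get $\Exp[\Delta(x_k)\mid p_j] \le \Delta(p_j)$; taking total expectation and inserting the block bound gives $\Exp[\Delta(x_k)] \le \rho^{\,j}\Delta(x_0)$. It only remains to convert the exponent: since $j = \floor{k/K} \ge k/K - 1 = (k-K)/K$ and $0 < \rho < 1$, raising $\rho$ to the larger exponent decreases it, whence
\[
\rho^{\,j} \le \rho^{(k-K)/K} = \big(\rho^{1/K}\big)^{k-K},
\]
which is exactly the advertised bound $\Exp[\Delta(x_k)] \le \big(\max(\sigma, 1 - \sigma m_K(\mu_F(v)))^{1/K}\big)^{k-K}\Delta(x_0)$.
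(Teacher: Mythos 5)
Your proof is correct and follows essentially the same route as the paper: split the iterations into blocks of length $K$, apply Theorem~\ref{th:restart_approx} once per block, use \eqref{eq:stability_approx} to control the partial block, and convert the integer block count into the exponent $(k-K)/K$. The only difference is cosmetic --- you make the conditioning on the random restart points and the tower-property step explicit, which the paper's proof leaves implicit.
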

\begin{proof}
Let us write the Euclidean division $k = m K + r$ with $r \in [0, K-1]$. Using \eqref{eq:stability_approx} and Theorem~\ref{th:restart_approx},
\begin{align*}
\Delta(x_k) &\leq \Delta(x_{mK})  = \Delta(\bar x_{mK}) \leq \max\left( \sigma, 1 -\sigma m_K(\mu_F(v)) \right) \Delta(x_{(m-1) K}) \\
& \leq  \max\left( \sigma, 1 -\sigma m_K(\mu_F(v)) \right)^m \Delta(x_0)  \\
& =  \left(\max\left( \sigma, 1 -\sigma m_K(\mu_F(v)) \right)^{1/K}\right)^{k-r} \Delta(x_0)\\
& \leq  \left(\max\left( \sigma, 1 -\sigma m_K(\mu_F(v)) \right)^{1/K}\right)^{k-K} \Delta(x_0) 
\end{align*}
\end{proof}
Theorem~\ref{th:sigmaK} shows that Algorithm~\ref{algo:restartedFGMv2} is linearly convergent
with respect to arbitrary convex combination coefficient $\sigma\in(0,1)$ and arbitrary restarting period $K \in \mathbb{N}$. This implies that we can always get linear convergence  without any information on the strong convexity parameter of the objective function. The next proposition provides an estimation on the rate of convergence given a guess $\mu$ on the parameter $\mu_F(v)$ and a particular choice of $\sigma$ and $K$.

 
\begin{proposition}
Let $\mu \in (0, 1]$.  Choose \begin{align} \label{aK}
K=\ceil*{\frac{2\sqrt{3}}{\theta_0}\sqrt{1+\frac{1}{\mu}}-\frac{2}{\theta_0}+1 },
\end{align}
and 
\begin{align} \label{asigma}
\sigma=\frac{1}{1+m_K(\mu)}.
\end{align}
The iterates of Algorithm~\ref{algo:restartedFGMv2} satisfy for any $k \geq K$
\[
\Exp[\Delta(x_k)] \leq \left(1- \min \left(\frac{\mu_F(v)}{\mu} ,1\right)\frac{1+\mu\theta_0}{2+\mu} \right) ^{\frac{k\theta_0 \sqrt{\mu}}{2\sqrt{3}\sqrt{(1+\mu)}}-1}\Delta(x_0).
\]
\label{prop:choose_sigma_and_K}
\end{proposition}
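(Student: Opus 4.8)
The plan is to substitute the prescribed $K$ and $\sigma$ into Theorem~\ref{th:sigmaK}, which already gives $\Exp[\Delta(x_k)]\le\rho^{\,k/K-1}\Delta(x_0)$ with base $\rho:=\max\left(\sigma,\,1-\sigma m_K(\mu_F(v))\right)$. I will bound this in two stages, \emph{in this order to keep all exponents nonnegative}: first replace $\rho$ by the target rate $\bar\rho:=1-\min(\mu_F(v)/\mu,1)\frac{1+\mu\theta_0}{2+\mu}$, which is legitimate because $k\ge K$ makes the exponent $k/K-1\ge0$; and then, since $\bar\rho<1$, enlarge the bound by decreasing the exponent from $k/K-1$ to $\frac{k\theta_0\sqrt\mu}{2\sqrt3\sqrt{1+\mu}}-1$. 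Throughout I abbreviate $S_K:=\sum_{i=0}^{K-1}\frac{\gamma_K^i}{\theta_{i-1}^2}+\frac1{\theta_0\theta_{K-1}}-\frac{1-\theta_0}{\theta_0^2}$, so that $m_K(\mu)=\frac{\mu\theta_0^2}{1+\mu(1-\theta_0)}S_K$ by~\eqref{amkmu}.

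For the exponent I would first show $K\le\frac{2\sqrt3}{\theta_0}\sqrt{1+1/\mu}$: applying $\ceil{A}<A+1$ to $A=\frac{2\sqrt3}{\theta_0}\sqrt{1+1/\mu}-\frac2{\theta_0}+1$ from~\eqref{aK} gives $K<\frac{2\sqrt3}{\theta_0}\sqrt{1+1/\mu}+(2-\tfrac2{\theta_0})$, and $2-2/\theta_0\le0$ since $\theta_0\le1$. Because $\sqrt{1+1/\mu}=\sqrt{1+\mu}/\sqrt\mu$, this yields $1/K\ge\frac{\theta_0\sqrt\mu}{2\sqrt3\sqrt{1+\mu}}$, hence $k/K-1\ge\frac{k\theta_0\sqrt\mu}{2\sqrt3\sqrt{1+\mu}}-1$, which is exactly the reduction of the exponent needed in the second stage.

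For the base, the choice $\sigma=\frac1{1+m_K(\mu)}$ is precisely the one balancing $\sigma=1-\sigma m_K(\mu)$. I would note $\mu\mapsto m_K(\mu)$ is nondecreasing (as $\mu\mapsto\mu/(1+\mu(1-\theta_0))$ increases for $\theta_0\le1$) and split on the sign of $\mu_F(v)-\mu$. If $\mu_F(v)\ge\mu$ then $1-\sigma m_K(\mu_F(v))\le1-\sigma m_K(\mu)=\sigma$, so $\rho=\sigma$; if $\mu_F(v)<\mu$ then $\rho=1-\sigma m_K(\mu_F(v))$, and since $\frac{1+\mu(1-\theta_0)}{1+\mu_F(v)(1-\theta_0)}\ge1$ one has $m_K(\mu_F(v))\ge\frac{\mu_F(v)}{\mu}m_K(\mu)$, hence $\sigma m_K(\mu_F(v))\ge\frac{\mu_F(v)}{\mu}\,\sigma m_K(\mu)$. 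In both cases everything collapses to the single inequality $m_K(\mu)\ge\frac{1+\mu\theta_0}{1+\mu(1-\theta_0)}$, which is equivalent to $\frac{m_K(\mu)}{1+m_K(\mu)}\ge\frac{1+\mu\theta_0}{2+\mu}$: this gives $\sigma\le1-\frac{1+\mu\theta_0}{2+\mu}$ in the first case and $\sigma m_K(\mu_F(v))\ge\frac{\mu_F(v)}{\mu}\frac{1+\mu\theta_0}{2+\mu}$ in the second, i.e. exactly $\rho\le\bar\rho$.

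It remains to prove $m_K(\mu)\ge\frac{1+\mu\theta_0}{1+\mu(1-\theta_0)}$, equivalently $S_K\ge\frac{1+\mu\theta_0}{\mu\theta_0^2}$. From $K\ge\frac{2\sqrt3}{\theta_0}\sqrt{1+1/\mu}-\frac2{\theta_0}+1$ and $\theta_{K-1}\le\frac2{K-1+2/\theta_0}$ (from~\eqref{athetabd}) I get $\frac1{\theta_{K-1}^2}\ge\frac{(K-1+2/\theta_0)^2}{4}\ge\frac{3(1+\mu)}{\mu\theta_0^2}$, so it suffices to establish the \emph{structural bound} $R_K:=S_K\theta_{K-1}^2\ge\frac13$ (then $S_K\ge\frac{1+\mu}{\mu\theta_0^2}\ge\frac{1+\mu\theta_0}{\mu\theta_0^2}$, using $\theta_0\le1$). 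This structural bound is the main obstacle. Using the recursion~\eqref{eq:gammas}, the identity $\gamma_k^k=\theta_{k-1}/\theta_0$, and $\theta_k^2/\theta_{k-1}^2=1-\theta_k$ from~\eqref{arectheta}, I would derive a one-step inequality of the form $R_{k+1}\ge(1-\theta_k)^2R_k+\theta_k(1-\theta_k)$ with $R_1=\theta_0$; its fixed point $\frac{1-\theta_k}{2-\theta_k}\to\tfrac12$, but the delicate point is that $R_1=\theta_0$ can be tiny (for APPROX, $\theta_0=\tau/n$), so $R_k\ge\tfrac13$ fails for small $k$ and one must show $R_k$ climbs above $\tfrac13$ by the time $k=K$. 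I expect to do this by unrolling into $R_K\ge\theta_{K-1}^4\sum_{j=1}^{K-1}\frac1{\theta_j\theta_{j-1}^2}$ (the products telescoping via $1-\theta_l=\theta_l^2/\theta_{l-1}^2$) and lower-bounding the sum with the explicit estimates~\eqref{athetabd}; the constants $2\sqrt3$ in~\eqref{aK} and $\tfrac13$ here are exactly what make this margin close.
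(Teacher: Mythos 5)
Your plan follows the same architecture as the paper's proof (which is carried out in Appendix~\ref{sec:estimation} with a parameter $\lambda$ and then specialized to $\lambda=1+\mu$): plug the prescribed $K$ and $\sigma$ into Theorem~\ref{th:sigmaK}, bound the exponent via $K\le\frac{2\sqrt3}{\theta_0}\sqrt{1+1/\mu}$, and reduce the bound on the base $\max\left(\sigma,1-\sigma m_K(\mu_F(v))\right)$, through the case split on the sign of $\mu-\mu_F(v)$ and the monotonicity of $\mu\mapsto m_K(\mu)$, to the single inequality $S_K\ge\frac{1+\mu\theta_0}{\mu\theta_0^2}$. That part of your argument is correct and matches the paper's computations: the paper phrases the target as $\mu\theta_0^2\xi_K\ge\lambda$ with $\xi_K=S_K+\frac{1-\theta_0}{\theta_0^2}$, and $\frac{\lambda-\mu(1-\theta_0)}{\lambda+1}=\frac{1+\mu\theta_0}{2+\mu}$ for $\lambda=1+\mu$.

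The gap is exactly where you flag ``the main obstacle'': the structural bound $R_K=S_K\theta_{K-1}^2\ge\frac13$ is not proved, and the route you sketch for it does not close. Unrolling the recursion does give $R_K\ge\theta_{K-1}^4\sum_{j=1}^{K-1}\frac{1}{\theta_j\theta_{j-1}^2}$, but to lower-bound this with \eqref{athetabd} you must use the upper estimate $\theta_j\le\frac{2}{j+2/\theta_0}$ inside the sum and the lower estimate $\theta_{K-1}\ge\frac{1}{K-1+1/\theta_0}$ outside; the factor-of-two mismatch between these two estimates is raised to the fourth power, and the resulting constant is asymptotically $\frac{1}{32}$, not $\frac13$. (With the true asymptotics $\theta_k\sim\frac{2}{k+2/\theta_0}$ the quantity tends to $\frac12$, so the claim is true, but \eqref{athetabd} is far too loose to certify $\frac13$ along this route.) The paper sidesteps this entirely: it proves the invariant $\xi_k\ge\frac{1}{3\theta_k^2}$ for \emph{all} $k\ge1$ by induction directly on the recursion $\xi_{k+1}=(1-\theta_k)\xi_k+\frac{1+(\frac n\tau-1)\theta_k}{\theta_k}$, using only the exact identity \eqref{arectheta} and the crude fact $\theta_{k+1}\le\frac23$ (whence $\frac{1}{\theta_k}\ge\frac{1}{\sqrt3\,\theta_{k+1}}$ and $\frac{2}{3\sqrt 3}\ge\frac13$); this induction never plays an upper estimate of one $\theta$ against a lower estimate of another, which is what preserves the constant. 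Note also that the paper's quantity $\xi_k\theta_k^2$ has base case $\xi_1\theta_1^2=1-\theta_1\ge\frac13$, so there is no ``climbing'' phase to control; the subtraction of $\frac{1-\theta_0}{\theta_0^2}$ is absorbed only at the end via $\xi_K\ge\frac{1+\mu}{\mu\theta_0^2}\ge\frac{1-\theta_0}{\theta_0^2}$, exactly the slack you also identified. To repair your proof, replace the unrolling step by such an induction; the rest of your argument then goes through.
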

\begin{proof}
This is a direct corollary of Proposition~\ref{prop:choose_sigma_and_K_appendix} by taking $\lambda=1+\mu$, presented in Appendix~\ref{sec:estimation}.
\end{proof}

 Let us have additional insight of Proposition~\ref{prop:choose_sigma_and_K}.
\begin{coro}\label{corok}
Denote $D(x_0) = \theta_0^2 \Delta(x_0) = (1 - \theta_0
)(F(x_0) - F(x_*)) + \frac{1}{2}\norm{x_0 - x_*}_v^2$. Let $\mu \in (0, 1]$.  Choose $K$ and $\sigma$ as in~\eqref{aK}
and~\eqref{asigma}. Then for 
$$
k\geq \frac{n}{\tau}\left(6\sqrt{6}\max\left(\frac{1}{\sqrt{\mu}}, \frac{\sqrt{\mu}}{\mu_F(v)}\right)
\log\left(\frac{D(x_0)}{\epsilon}\right) + 2\sqrt{3} \sqrt{1+\frac{1}{\mu}}\right),
$$
we have
$$
(1 - \theta_0)(F(x_k) - F(x_*)) + \frac{1}{2}\norm{x_k - x_*}_v^2 \leq \epsilon.
$$
\end{coro}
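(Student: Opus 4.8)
The plan is to turn the geometric-rate bound of Proposition~\ref{prop:choose_sigma_and_K} into an iteration-complexity statement by taking logarithms, and then to estimate the two resulting constants using $\mu\in(0,1]$ and $\theta_0=\tau/n$.

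First I would recast the conclusion of Proposition~\ref{prop:choose_sigma_and_K} with shorthand. Write
$c=\min\!\big(\tfrac{\mu_F(v)}{\mu},1\big)\tfrac{1+\mu\theta_0}{2+\mu}$ and $d=\tfrac{\theta_0\sqrt\mu}{2\sqrt3\sqrt{1+\mu}}$, so that the proposition reads $\Exp[\Delta(x_k)]\leq(1-c)^{dk-1}\Delta(x_0)$ for $k\geq K$. Multiplying by $\theta_0^2$, using $D(x_0)=\theta_0^2\Delta(x_0)$ together with the identity $\theta_0^2\Delta(x_k)=(1-\theta_0)(F(x_k)-F(x_*))+\tfrac12\norm{x_k-x_*}_v^2$ (all understood in expectation, since $x_k$ is random), this becomes
\[
\Exp\Big[(1-\theta_0)(F(x_k)-F(x_*))+\tfrac12\norm{x_k-x_*}_v^2\Big]\leq(1-c)^{dk-1}D(x_0).
\]

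Next I would force the right-hand side below $\epsilon$. It suffices that $(1-c)^{dk-1}\leq\epsilon/D(x_0)$; taking logarithms (note $\log(1-c)<0$) and using the elementary bound $-\log(1-c)\geq c$, a sufficient condition (in the regime $\epsilon\leq D(x_0)$) is
\[
dk-1\geq\frac1c\log\frac{D(x_0)}{\epsilon},\qquad\text{i.e.}\qquad k\geq\frac{1}{cd}\log\frac{D(x_0)}{\epsilon}+\frac1d.
\]
It then remains to bound the two constants. Directly, $\tfrac1d=\tfrac{2\sqrt3}{\theta_0}\sqrt{1+\tfrac1\mu}=\tfrac n\tau\,2\sqrt3\sqrt{1+\tfrac1\mu}$, which matches the additive term and, since $\tfrac2{\theta_0}=\tfrac{2n}\tau\geq2$, exceeds $K=\ceil{\tfrac{2\sqrt3}{\theta_0}\sqrt{1+\tfrac1\mu}-\tfrac2{\theta_0}+1}$, so that Proposition~\ref{prop:choose_sigma_and_K} indeed applies. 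For the leading constant, using $\mu\leq1$ to get $2+\mu\leq3$ and $1+\mu\theta_0\geq1$, and $1+\mu\leq2$ to get $\sqrt{1+\tfrac1\mu}\leq\sqrt2/\sqrt\mu$, I would compute
\[
\frac{1}{cd}=\frac{2+\mu}{1+\mu\theta_0}\max\!\Big(\tfrac{\mu}{\mu_F(v)},1\Big)\frac{2\sqrt3}{\theta_0}\sqrt{1+\tfrac1\mu}\leq\frac{6\sqrt6}{\theta_0}\cdot\frac{1}{\sqrt\mu}\max\!\Big(\tfrac{\mu}{\mu_F(v)},1\Big)=\frac n\tau\,6\sqrt6\,\max\!\Big(\tfrac{1}{\sqrt\mu},\tfrac{\sqrt\mu}{\mu_F(v)}\Big),
\]
where the last equality absorbs the factor $1/\sqrt\mu$ into the maximum. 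Substituting these two estimates into the sufficient condition yields exactly the stated threshold.

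\textbf{Main obstacle.} The only real work is the constant-chasing of the last display: one must combine the two maxima correctly and place the bounds $2+\mu\leq3$, $1+\mu\leq2$ so that the product of $3$, $2\sqrt3$ and $\sqrt2$ collapses to $6\sqrt6$, and absorb the $1/\sqrt\mu$ into the maximum so the dependence on $\mu_F(v)$ comes out as $\sqrt\mu/\mu_F(v)$. The logarithmic reduction via $-\log(1-c)\geq c$ is standard; the one subtlety worth checking is that the additive $1/d$ term guarantees $k\geq K$ so that the underlying proposition is legitimately invoked.
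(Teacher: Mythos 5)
Your proposal is correct and follows essentially the same route as the paper: invoke Proposition~\ref{prop:choose_sigma_and_K} (the paper goes through its appendix version with $\lambda=1+\mu$), pass to logarithms, use $-\log(1-x)\geq x$, and bound the constant $\frac{(2+\mu)\sqrt{1+\mu}}{1+\mu\theta_0}\leq 3\sqrt{2}$ so that $2\sqrt{3}\cdot 3\sqrt{2}=6\sqrt{6}$. Your explicit checks that the additive term guarantees $k\geq K$ and that the target quantity is $\theta_0^2\Delta(x_k)$ (hence the $\log(D(x_0)/\epsilon)$ rather than $\log(\Delta(x_0)/\epsilon)$) are details the paper glosses over, and you handle them correctly.
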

The proof of Corollary~\ref{corok} is deferred to Appendix~\ref{sec:estimation}. We therefore showed that for any strong convexity estimator $\mu\in (0,1]$,  the iteration complexity of Algorithm~\ref{algo:restartedFGMv2} is on the order of
$$
O\left(\frac{n}{\tau} \max\left(\frac{1}{\sqrt{\mu}}, \frac{\sqrt{\mu}}{\mu_F(v)}\right) \log(1/\epsilon)  \right)
=\left\{\begin{array}{ll}
O\left(\frac{n}{\tau\sqrt{\mu}} \log(1/\epsilon) \right) & \text{if~~} \mu \leq \mu_F(v)
\\ O\left(\frac{n\sqrt{\mu}}{\tau\mu_F(v)} \log(1/\epsilon)  \right) & \text{if~~} \mu >\mu_F(v)
\end{array}\right.
$$
where the $O$ notation hides logarithms of problem dependent constants and universal constants. Recall that for coordinate descent methods~\cite{Nesterov:2010RCDM,RT:UCDC}, the iteration complexity bound is
$$
O\Big(\frac{n}{\tau \mu_F(v)}\log(1/\epsilon)\Big).
$$
Therefore, if $\mu$ is an upper bound on $\mu_F(v)$, our iteration complexity bound improves over that of the randomized coordinate descent method~\cite{lin2014accelerated,shalev2013accelerated} by a factor of $\sqrt{\mu}$; 
if $\mu$ is a lower bound on $\mu_F(v)$ such that $\mu_F(v) \leq \sqrt{\mu} \leq \sqrt{\mu_F(v)}$, we also obtain an improved complexity bound; 
only if $\mu < \mu_F^2$ is our bound worse. This observation will be  illustrated in Section~\ref{sec:expe}

\begin{remark} \normalfont
The theorems presented in this section can easily
be combined with an adaptive restart strategy. 
We just need to define an interval $[\underline{K}, \bar{K}]$
and allow the adaptive restart only if $k \in [\underline{K}, \bar{K}]$. Then if $k = \bar K$ we force the restart.
We obtain a linear convergence rate where the rate is
given by the worst case in the interval.
\end{remark}

\section{Numerical experiments}
\label{sec:expe}

\subsection{Illustration of the theoretical bounds}

We first illustrate the theoretical rate we have found.
On Figure~\ref{fig:rate_vs_mu}, we can see that
restarted APPROX has a better rate of convergence than 
vanilla proximal coordinate descent for a wide range of
estimates of the strong convexity. Indeed, in this example
with $n=10$ and $\mu_F(v) = 10^{-5}$, one can take $1.6\;10^{-9} \leq \mu \leq 0.04$.
Note that this shows that even if the estimate $\mu$ is much larger than the 
true strong convexity coefficient $\mu_F(v)$, we already see an improved rate.
Yet, of course the closer $\mu$ is to $\mu_F(v)$, the faster the algorithm will be.

On Figure~\ref{fig:rate_vs_muF}, we fix the estimate of the strong convexity
as $\mu = 10^{-3}$ and we plot the rate of convergence of the
method for $\mu_F(v) \in [10^{-9}, 1]$.

\begin{figure}
\centering
\includegraphics[width=0.6\linewidth]{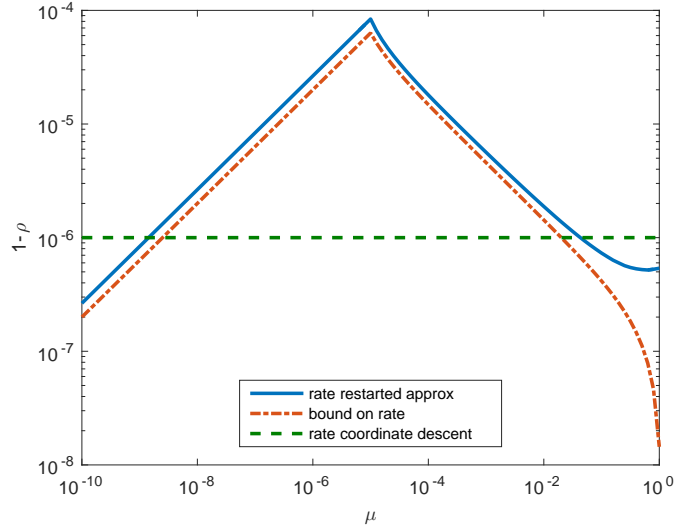}
\caption{Comparison of the rates of coordinate descent and restarted APPROX
when $\tau=1$, $n=10$ and $\mu_F(v) = 10^{-5}$. Given an estimate $\mu$ of $\mu_F(v)$,
we have chosen $\lambda =1$ and $K$ and $\sigma$ as in Proposition~\ref{prop:choose_sigma_and_K}.
The blue solid line is the rate given by Theorem~\ref{th:restart_approx} and 
the red dash-dotted line is the simpler rate given in Proposition~\ref{prop:choose_sigma_and_K}.
We are plotting 1 minus the rate $\rho$ in logarithmic 
scale for a better contrast.}
\label{fig:rate_vs_mu}
\end{figure}

\begin{figure}
\centering
\includegraphics[width=0.6\linewidth]{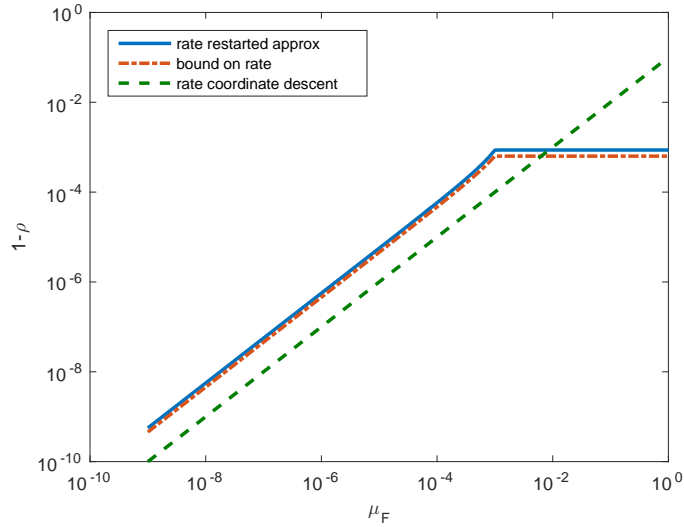}
\caption{Comparison of the rates of coordinate descent and restarted APPROX
when $\tau=1$, $n=10$ and $\mu = 10^{-3}$ (this corresponds to a restart
every $K \approx 107n$ iterations with $\sigma\approx 0.4$). For each possible value of $\mu_F(v)$,
we have computed the rate as given in Proposition~\ref{prop:choose_sigma_and_K}.
With this choice of $\mu$, restarted APPROX has a better rate than coordinate descent as soon as
$\mu_F(v) < 8.\; 10^{-3}$ and is about 5 times faster when $\mu_F(v)$ is small.}
\label{fig:rate_vs_muF}
\end{figure}

\subsection{Gradient methods}

We then present experiments on accelerated gradient methods (the case $n=\tau$).
We solve the $L^1$-regularised least squares problem (Lasso)
\[
\min_{x \in \mathbb{R}^N} \frac{1}{2} \norm{A x - b}^2_2 + \lambda \norm{x}_1
\]
on the Iris dataset where $A \in \mathbb{R}^{m \times n}$ 
is the design matrix and $b \in \mathbb{R}^m$ 
is such that $b_j = 1$ if the label is ``Iris-setosa'', $b_j = -1$ otherwise.
We chose $\lambda = \norm{A^T b}_\infty / 10$.
This dataset is rather small ($n = 4$ and $m = 8124$). 
As we can see on Table~\ref{tab:restart_comparison_accgrad}, non-accelerated proximal gradient
(underlined numbers) is faster than accelerated variants (italics).
Yet, accelerated gradient methods designed for strongly convex objectives may be faster than
both vanilla proximal gradient and basic accelerated proximal gradient. The ``true'' strong convexity coefficient is around 5.3 $10^{-4}$:
we can see that taking $\mu_\text{est}$ close to this value
leads indeed to a faster algorithm but that the algorithms
are rather stable to approximations.

Dual APG is quite efficient on this dataset but as its restarting rule
is based on a divergence detection scheme, some attention should
be paid before using intermediate solutions. Also remark
that when it is not restarted, APG exhibits the sublinear $O(1/k^2)$ rate
while FISTA seems to be take some profit of strong convexity even without restarting.

All restarting strategies seem to perform well. Note however that
APG-$\mu$ and FISTA-$\mu$ are only proved to converge linearly when
$\mu \leq \mu_F(v)$ and that the adaptive restart of~\cite{o2012adaptive}
is a heuristic restart. Indeed, with APG, the restart condition did
not happen within the first 10,000 iterations, which shows that 
this adaptive restart is not always efficient.

The rule of Theorem~\ref{th:sigmaK} is more complex and
seems to be slightly less efficient than the rule of Theorem~\ref{th:restart_sigma}. We still present this
more complex rule because it is the only one that
is proved to have a linear rate of convergence in the accelerated
coordinate descent case $n>\tau$.

\begin{table}[h]
\begin{tabular}{|l|r|r|r|r|r|r|r|r|}
\hline
$\mu_{\text{est}}$ &  1  &  0.1  & 0.01 & 0.001 & $10^{-4}$ & $10^{-5} $ & $10^{-6}$ & $10^{-8} $ \\ 
\hline
Dual APG with  &  \multirow{2}{*}{447}  &  \multirow{2}{*}{398} &   \multirow{2}{*}{265}  &  \multirow{2}{*}{156}  &  \multirow{2}{*}{162}  &  \multirow{2}{*}{163}   & \multirow{2}{*}{163}   & \multirow{2}{*}{163} \\
adaptive restart~\cite{nesterov2013gradient} &&&&&&&&\\
\hline
FISTA-$\mu$~\cite{vandenberghe2016lecture} &  \underline{751}  & 352 &  170 &  173 &  264  & 291  & 277 & 277 \\
\hline
FISTA restarted: &&&&&&&&\\   
at $x$, Proposition~\ref{prop:restart_x} &  \underline{751} &  687 &  297 & 160 & 198 &  {\em 278} &  {\em 278} & {\em 278} \\
at $z$, Proposition~\ref{prop:restart_z}  &   \underline{751} &  464 &   222 &  245 &  311  & {\em 278} &  {\em 278} & {\em 278}  \\
as Theorem \ref{th:restart_sigma}   &  633 &   274 &  168 &  211 &  {\em 278} &  {\em 278} &  {\em 278} & {\em 278} \\
as Theorem \ref{th:sigmaK}  &  801 &  477  &  181 &  232  &{\em 278} & {\em 278} &  {\em 278} & {\em 278} \\
if $F(x_{k+1}) > F(x_k)$~\cite{o2012adaptive} & \multicolumn{7}{c}{121} &\\
 \hline 
 APG-$\mu$~\cite{lin2014accelerated} & \underline{751}   &  351 & 340  &   882 & 2580  & 7453 &   $>$10000 & $>$10000 \\
 \hline
APG  restarted: &&&&&&&&\\ 
at $x$, Proposition~\ref{prop:restart_x} &  \underline{751}  & 684 & 297   &  189 & 311 &   894  & 1471 & 4488 \\
at $z$, Proposition~\ref{prop:restart_z}  &  \underline{751}  & 463 & 221   & 232  & 281   & 460   & 1415   &    4473 \\
as Theorem \ref{th:restart_sigma} &  632 &  275 & 173 & 281  & 794 &   1310  & 3977  & $>${\em 10000} \\
as Theorem \ref{th:sigmaK} & 801  &  477 &  214 &  288 & 703 & 1166  &  3494  &  $>${\em 10000} \\
if $F(x_{k+1}) > F(x_k)$~\cite{o2012adaptive}& \multicolumn{7}{c}{$>${\em 10000}} & \\
\hline
\end{tabular}
\caption{Number of iterations to reach $F(x_k) - F(x_*) \leq 10^{-10}$ with various accelerated algorithms for the 
Lasso problem on the Iris dataset.
In italics, no restart has taken place; underlined numbers means that the algorithm is equivalent to non-accelerated ISTA.}
\label{tab:restart_comparison_accgrad}
\end{table}

\subsection{Coordinate descent}

We solve the following logistic regression problem:
\begin{align}\label{alr}
\min_{x \in \mathbb{R}^N} \frac{\lambda_1}{2 \|A^\top b\|_{\infty}} \sum_{j=1}^m \log(1 + \exp(b_j a_j^\top x)) + \norm{x}_1+\frac{\lambda_2}{2}\|x\|^2\end{align}
We consider 
$$
f(x)= \frac{\lambda_1}{ 2\|A^\top b\|_{\infty}} \sum_{j=1}^m \log(1 + \exp(b_j a_j^\top x)) ,
$$
and 
$$
\psi(x)=\norm{x}_1+\frac{\lambda_2}{2}\|x\|^2.
$$
In particular, for serial sampling ($\tau=1$),~\eqref{eq:ESO} is satisfied for
\begin{align}\label{av}
v_i=\frac{\lambda_1}{ 8\|A^\top b\|_{\infty}}\sum_{i=1}^m (b_jA_{ij})^2,\enspace i=1,\dots,n.
\end{align}
Then for the latter $v$, \eqref{a:strconv} is satisfied for
\begin{align}\label{amupsi}
\mu_F(v)=
\mu_{\psi}:=\frac{\lambda_2}{\max_i v_i}.
\end{align}

Even if the logistic objective
is not strongly convex, we expect that the local curvature around the optimum is nonzero
and so, that taking $\mu > \mu_F(v) = \mu_\psi$ will be useful.
We solve~\eqref{alr} for different values of $\lambda_2$, using $v$ and $\mu_F(v)=\mu_{\psi}$ defined in~\eqref{av} and~\eqref{amupsi}.  

We compare randomized coordinate descent (CD), APCG~\cite{lin2014accelerated} and APPROX-restart (Algorithm~\ref{algo:restartedFGMv2}) with $K$ and $\sigma$ given by Proposition~\ref{prop:choose_sigma_and_K}, on the dataset rcv1. 
We run both APCG and APPROX-restart using four different values of $\mu$: $\mu_F(v)$, $10\:\mu_F(v)$, $100\:\mu_F(v)$ and 1,000$\:\mu_F(v)$. We stop the program when the duality gap is lower than $10^{-10}$ or the running time is larger than 3,000s. The results are reported in Figure~\ref{fig:d0}, where by $\mu_F$ we refer to $\mu_F(v)$ defined in~\eqref{amupsi}.

 Note that the convergence of APCG is only proved for $\mu \leq \mu_F(v)$ in~\cite{lin2014accelerated}.  In our experiments, we observed numerical issues when running APCG for several cases when taking larger $\mu$ (we were
 not able to compute the $i$th partial derivative at $y_k = \rho_k w_k + z_k$ because $\rho_k$ had reached the 
 double precision float limit).  Such cases can be identified in the plots if the line corresponding to APCG 
 stops abruptly before the time limit (3000s) with a precision worse than $10^{-10}$.
On all the experiments, restarted APPROX is faster or much faster than APCG. Moreover, it is stable for any restarting frequency while APCG may
fail if one is too optimistic when setting the strong convexity estimate.

\begin{figure}[htbp]
\centering
\includegraphics[width=0.32\linewidth]{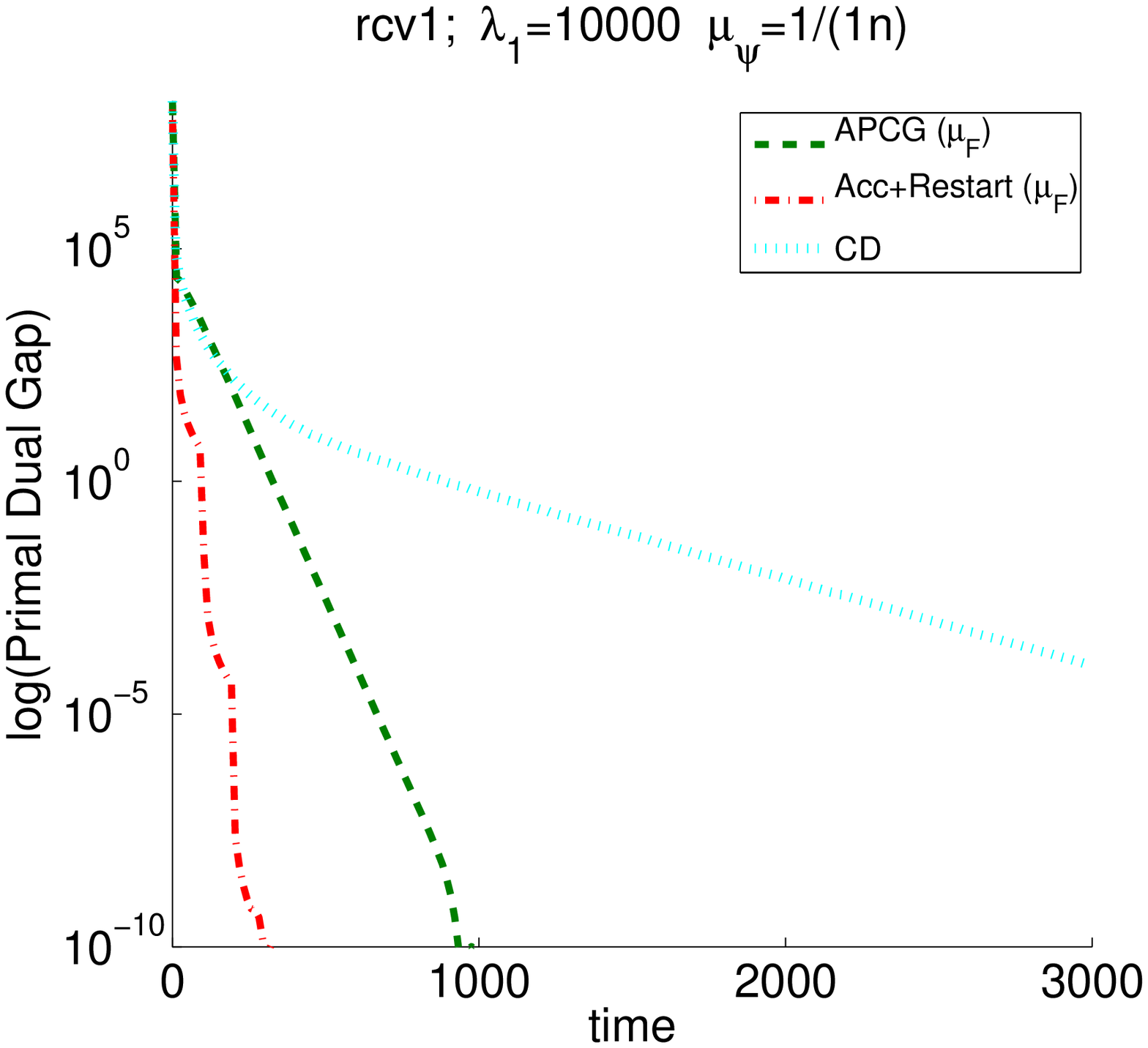}
\includegraphics[width=0.32\linewidth]{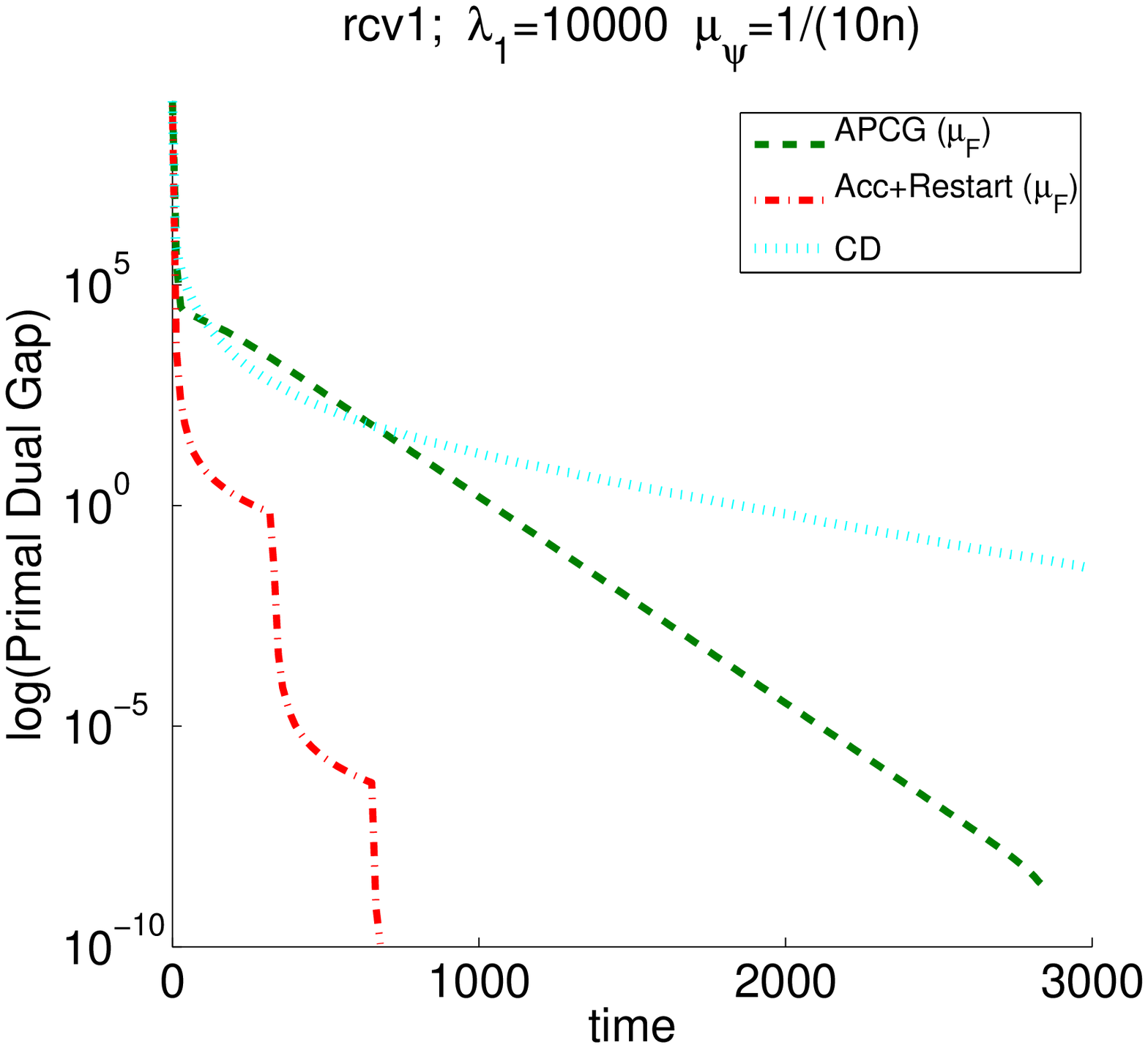}
\includegraphics[width=0.32\linewidth]{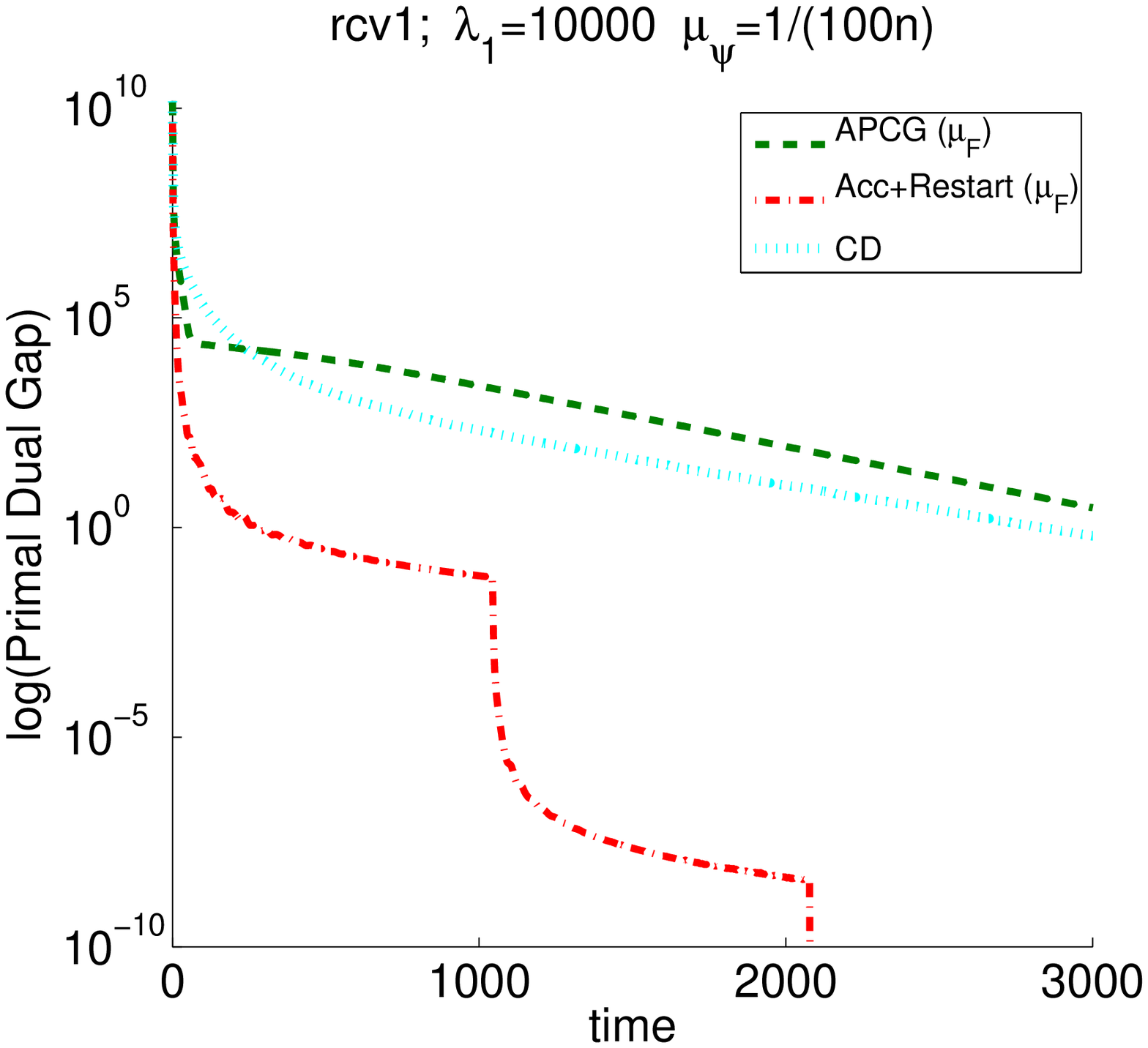}
\includegraphics[width=0.32\linewidth]{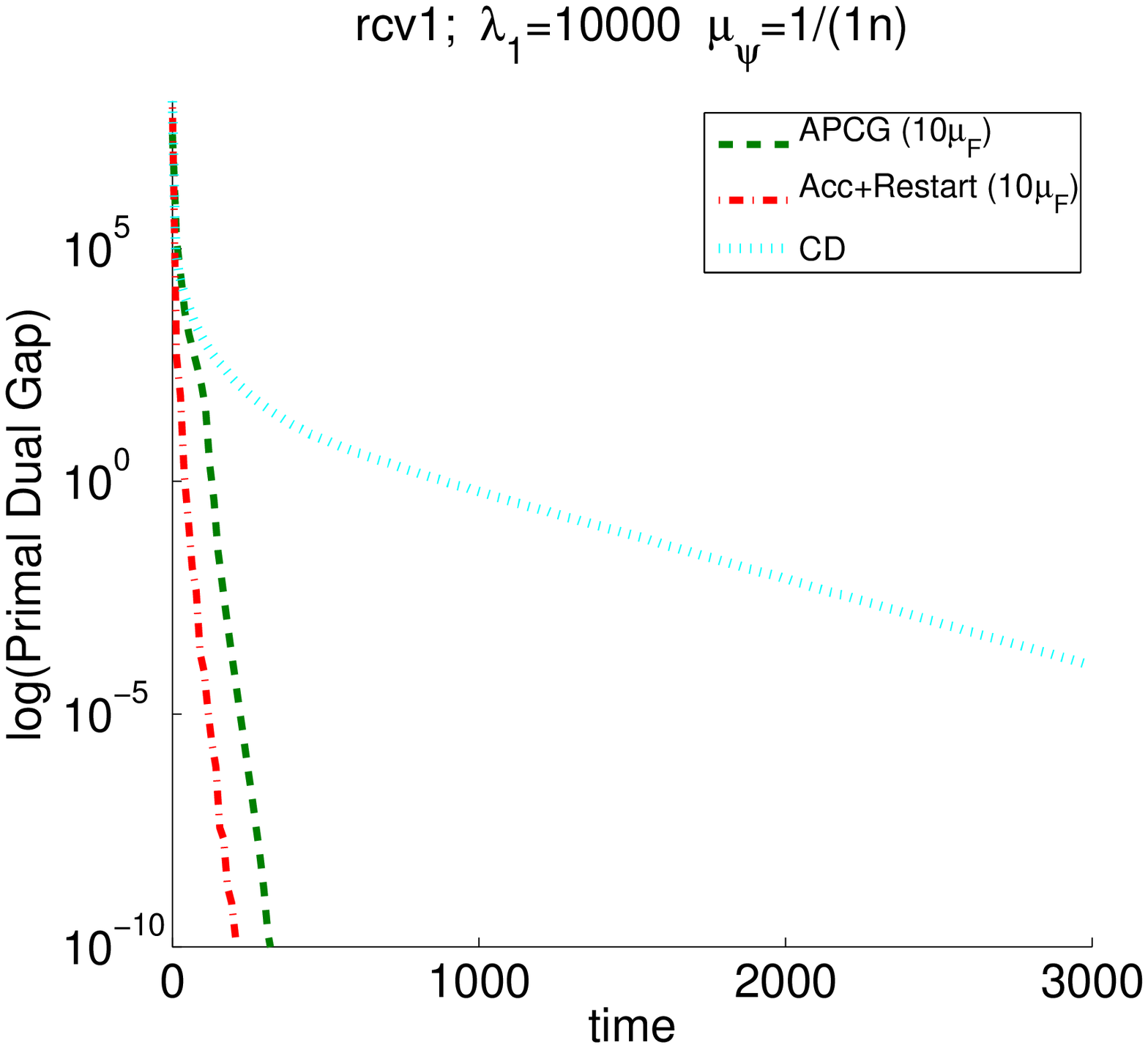}
\includegraphics[width=0.32\linewidth]{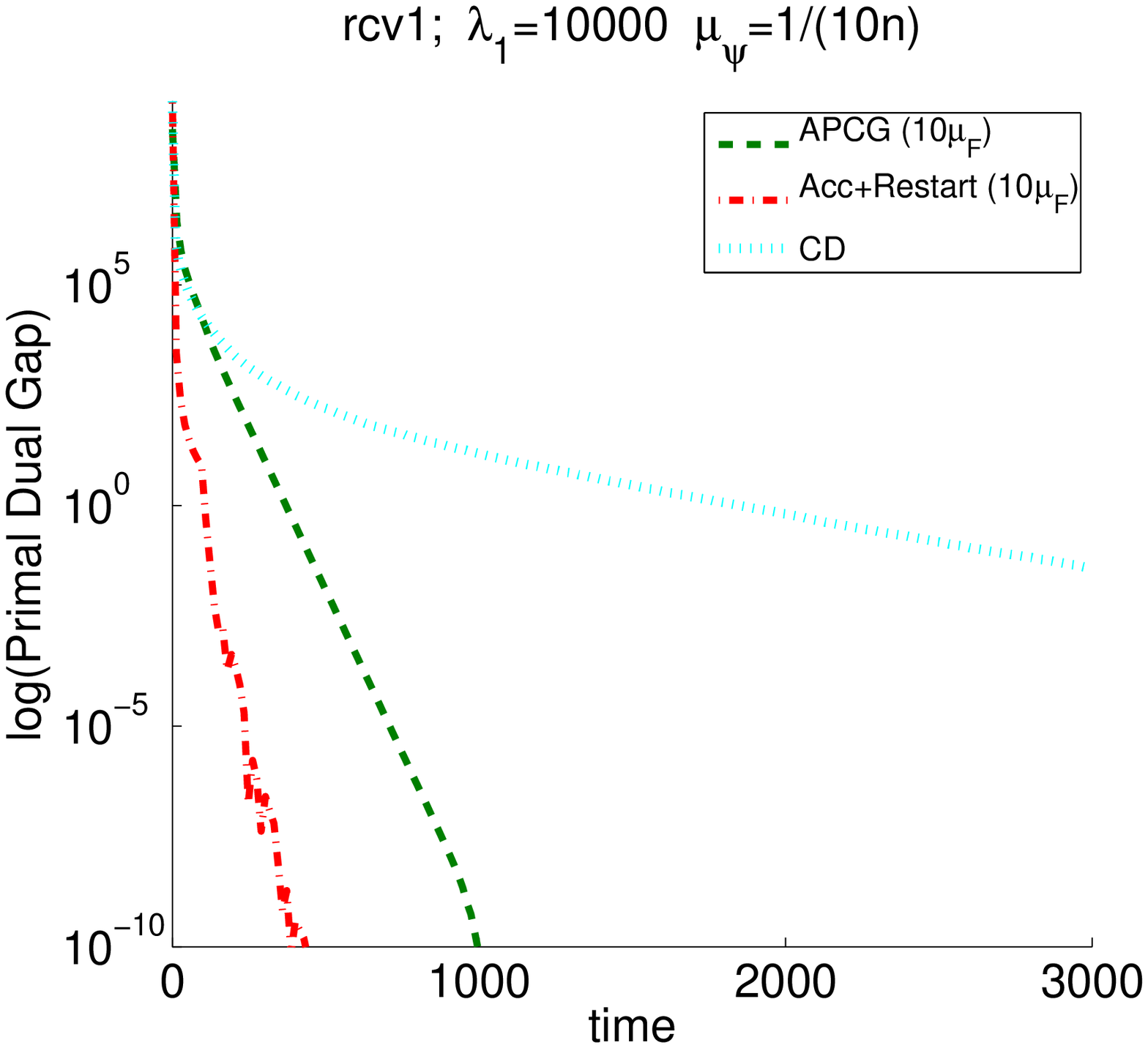}
\includegraphics[width=0.32\linewidth]{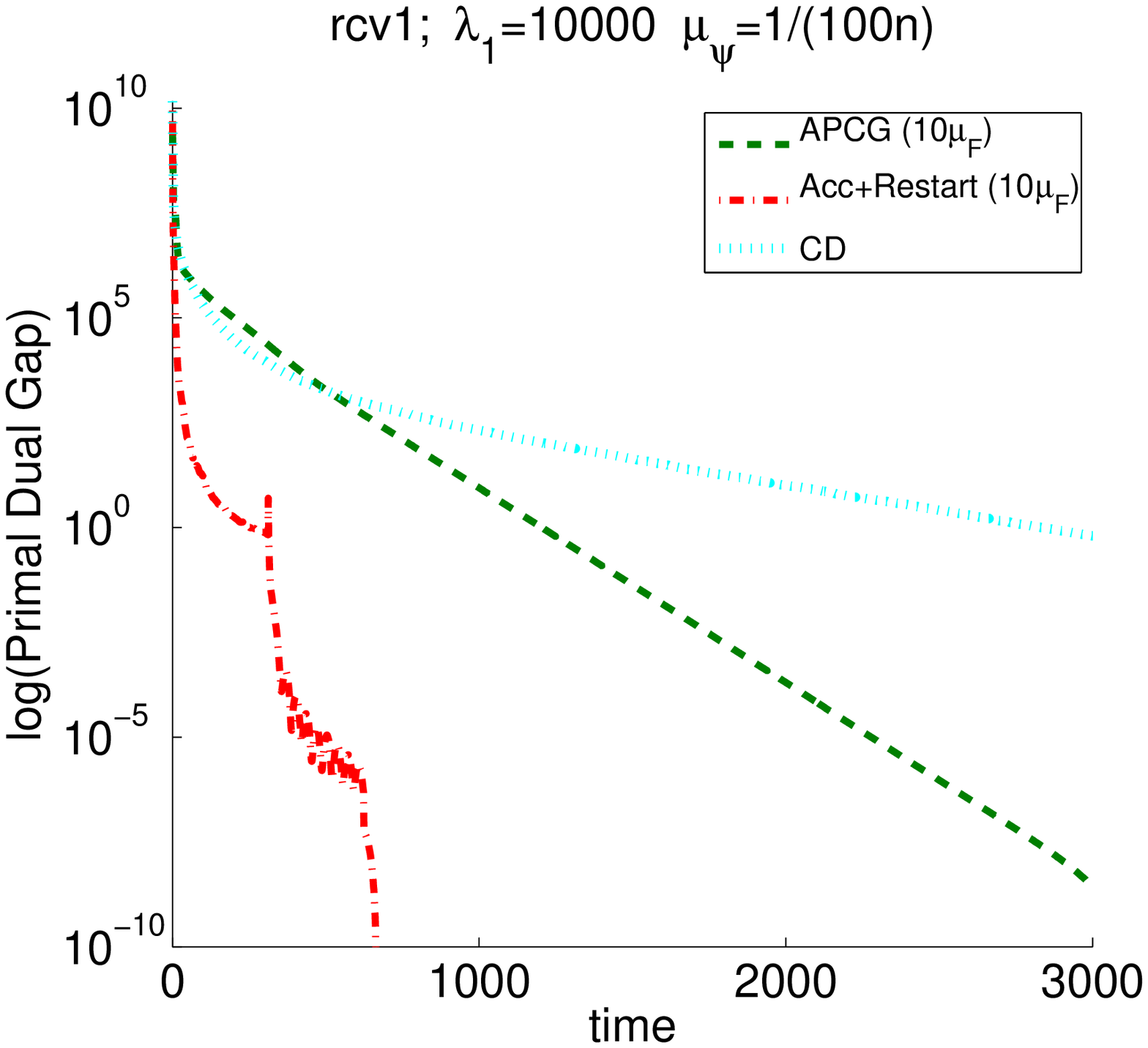}
\includegraphics[width=0.32\linewidth]{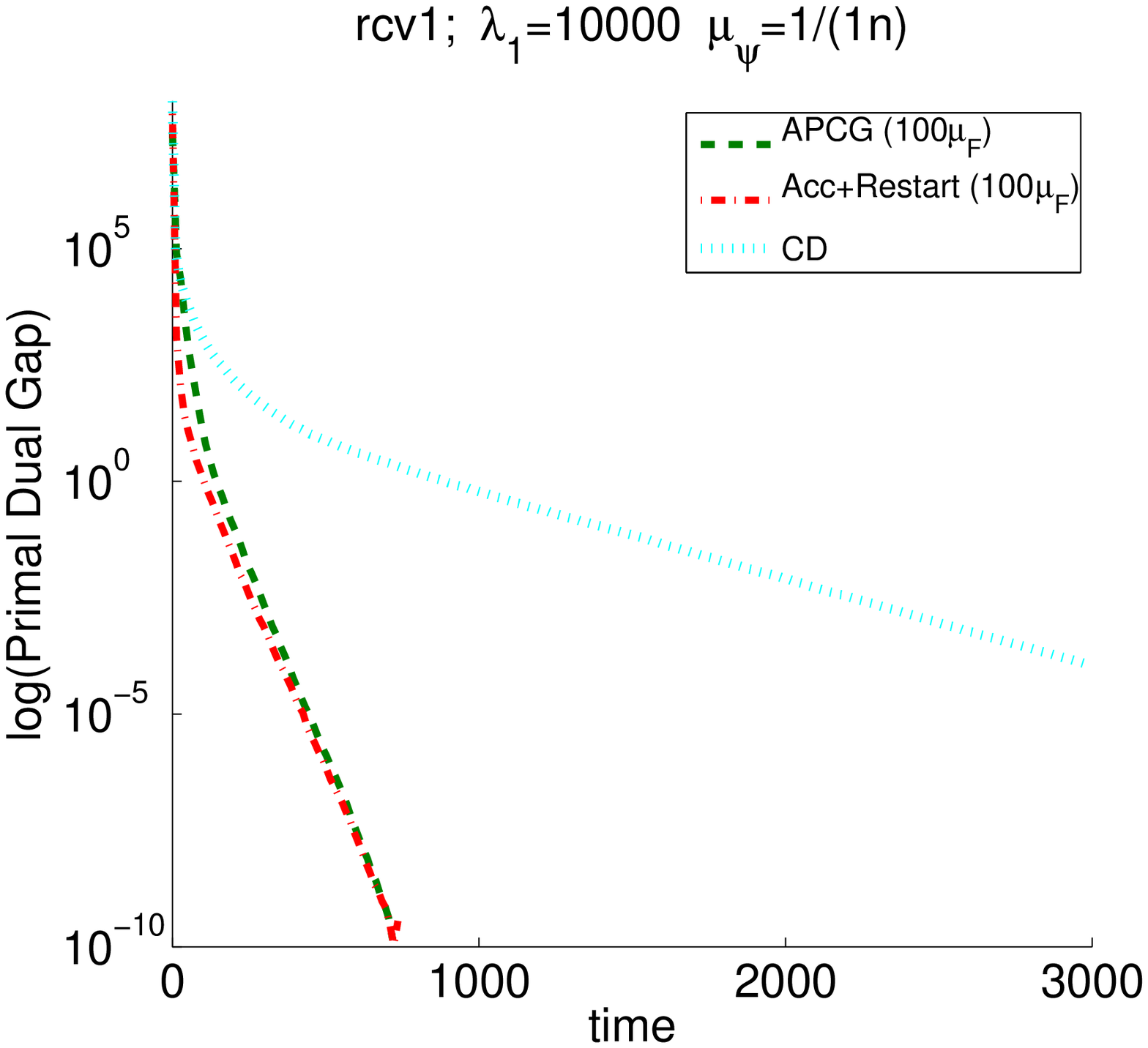}
\includegraphics[width=0.32\linewidth]{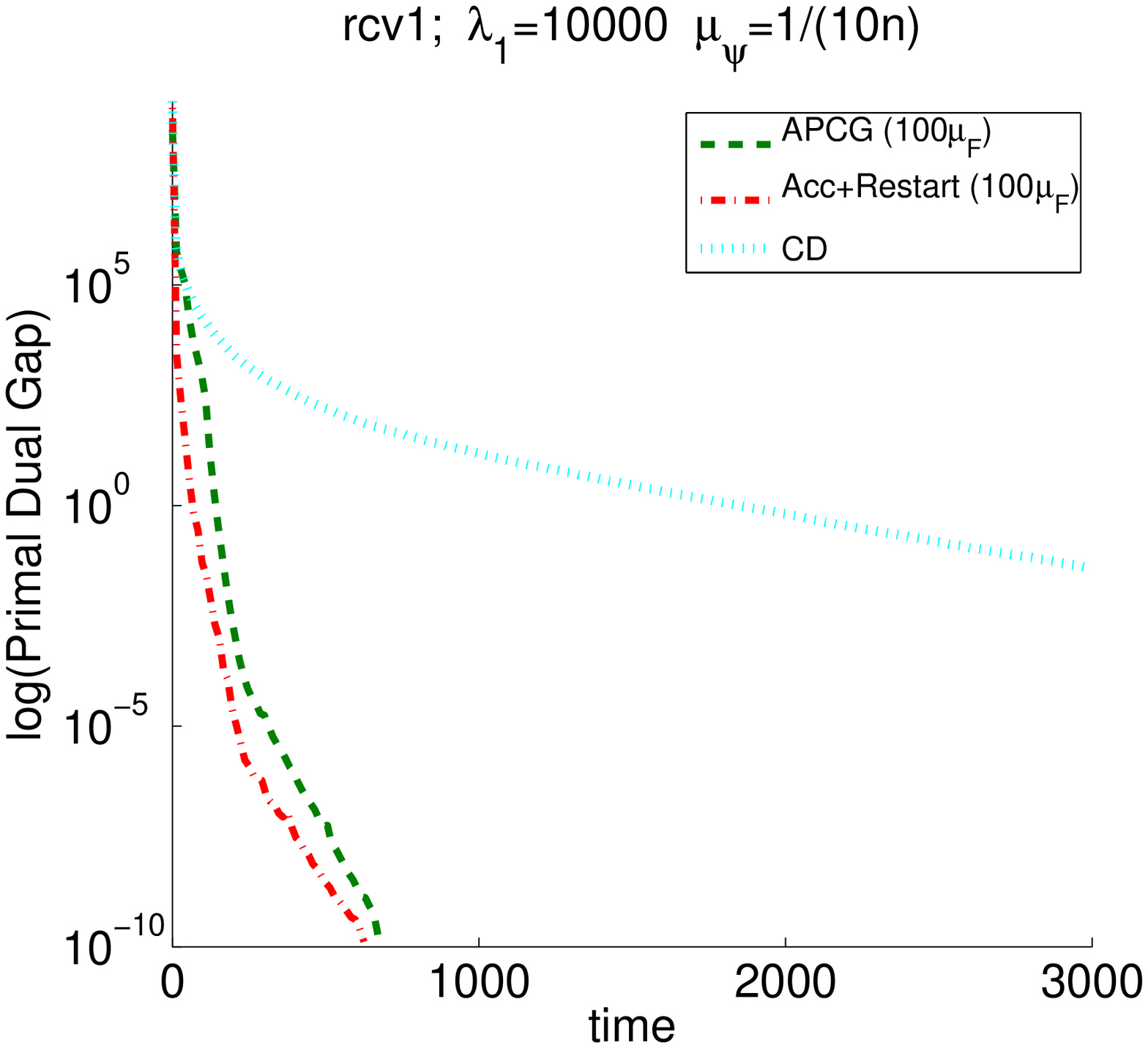}
\includegraphics[width=0.32\linewidth]{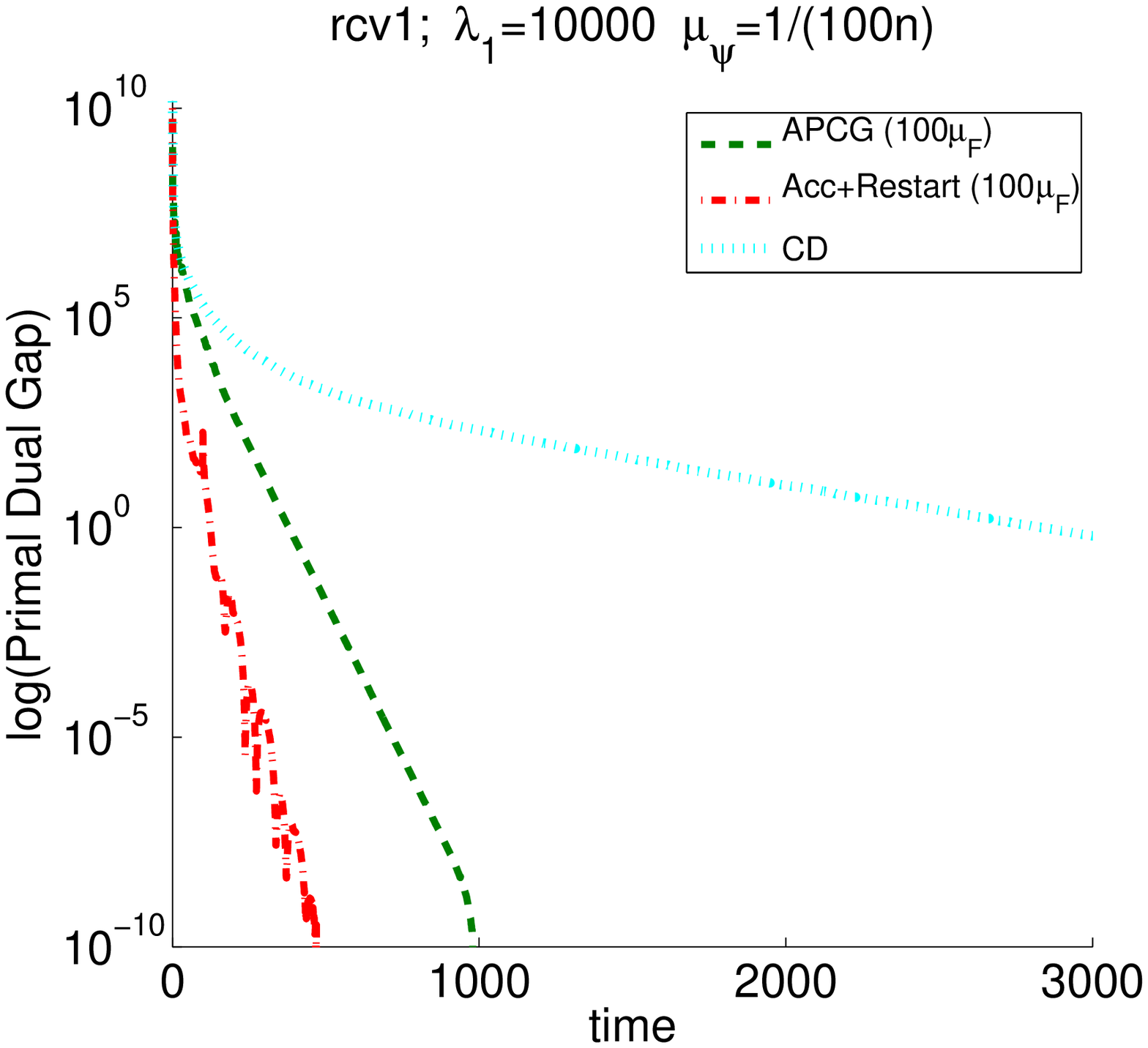}
\includegraphics[width=0.32\linewidth]{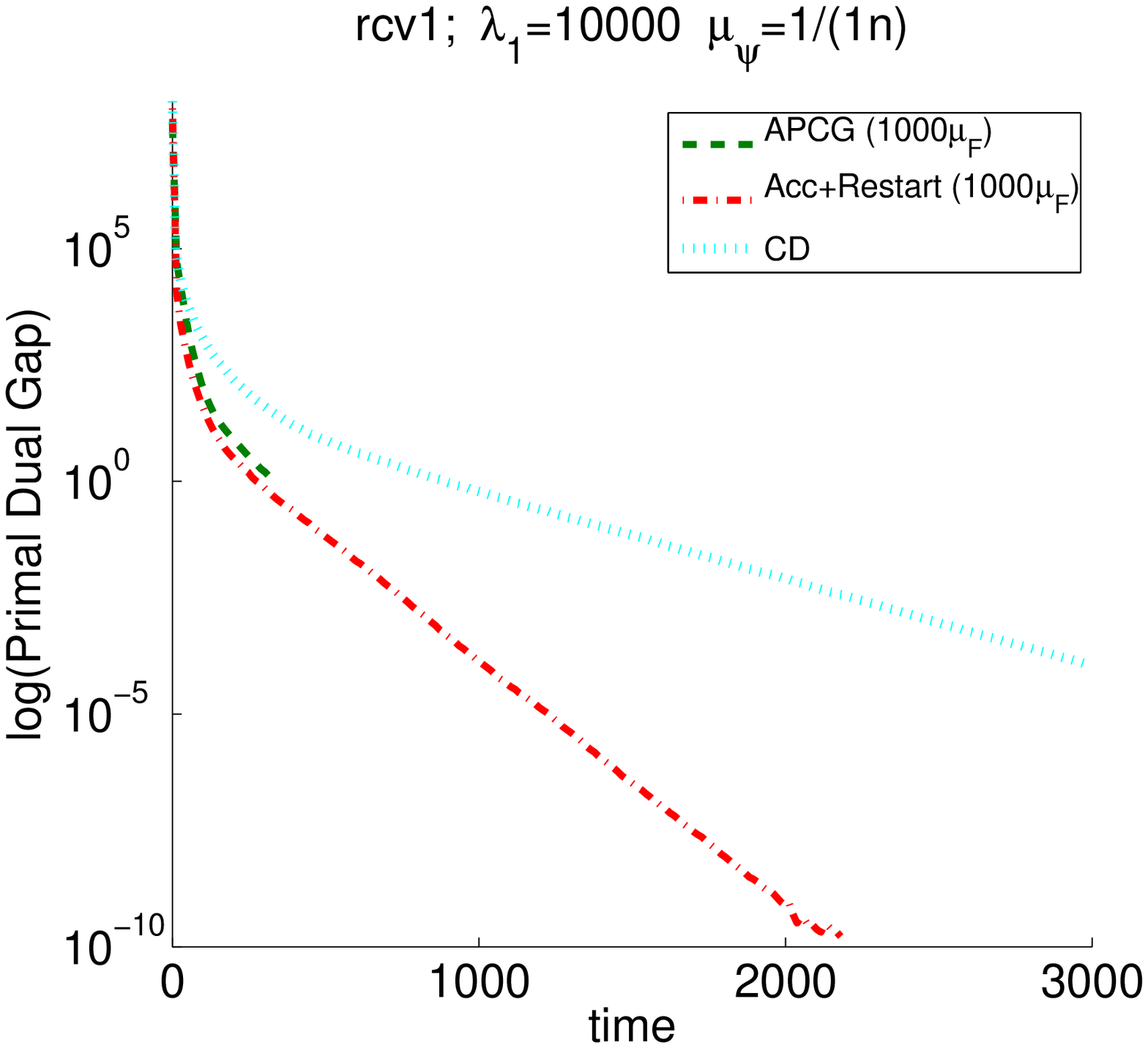}
\includegraphics[width=0.32\linewidth]{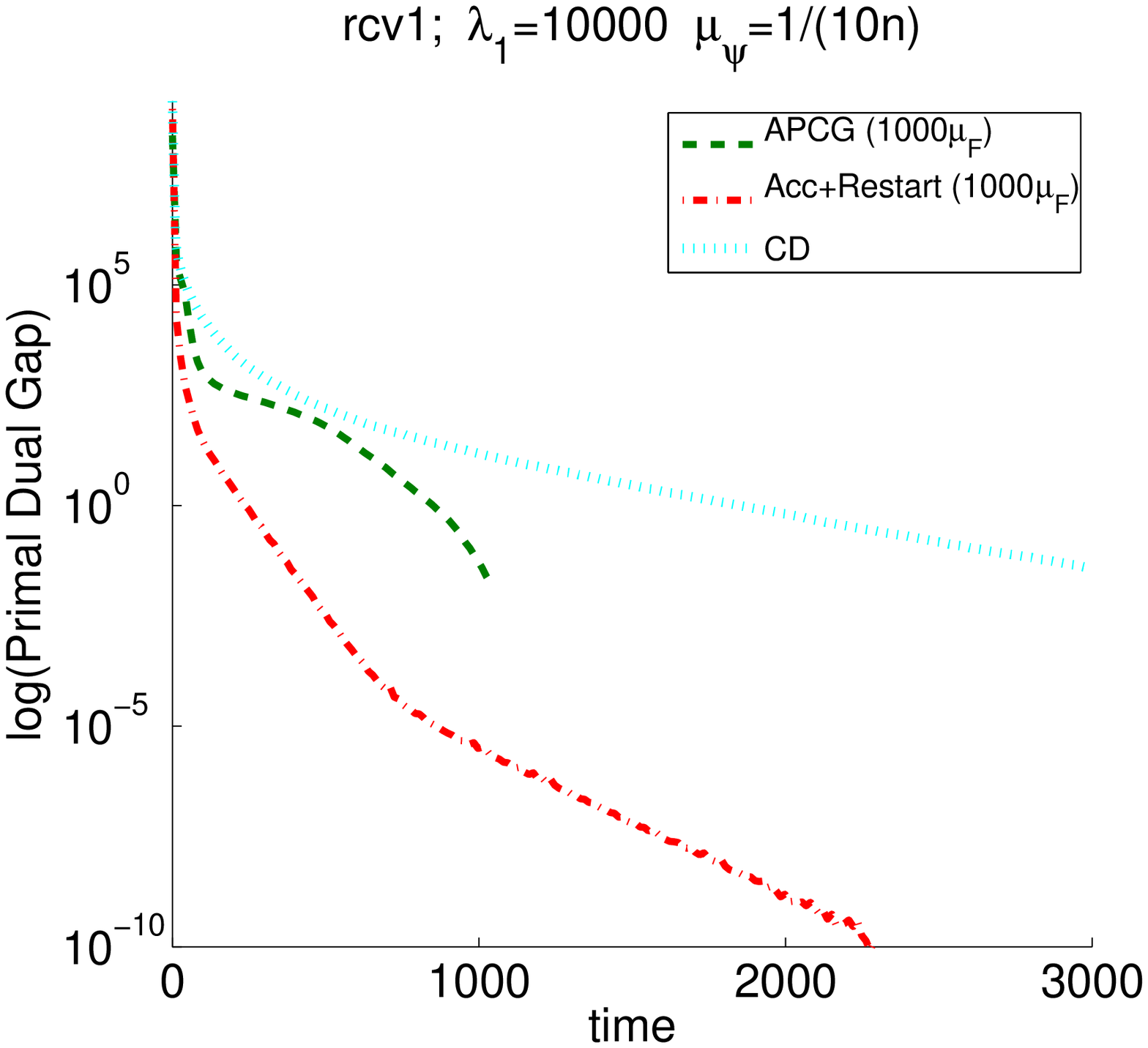}
\includegraphics[width=0.32\linewidth]{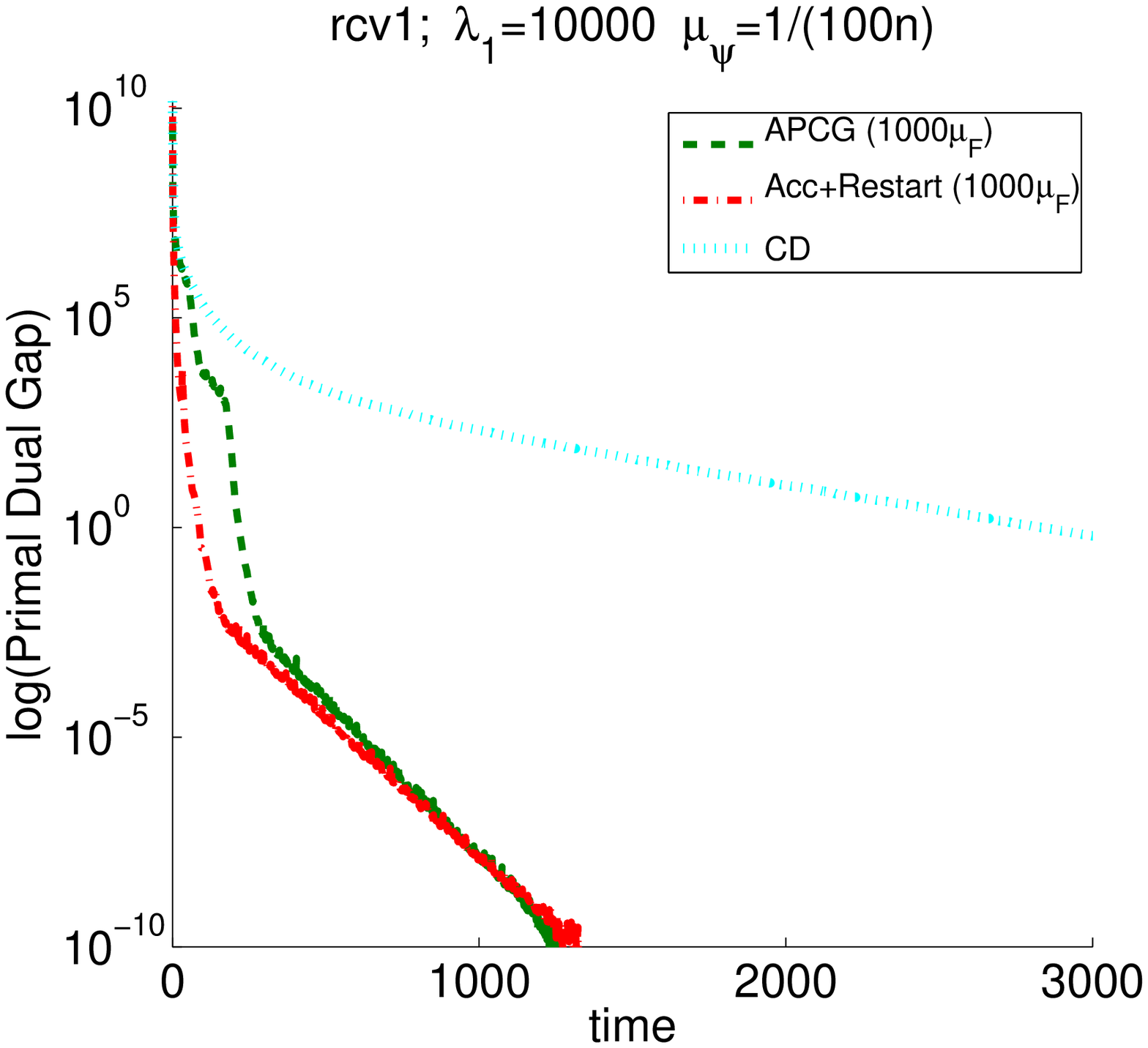}
\caption{Plots of results for dataset RCV1; APCG and APPROX-restart run with: $\mu=\mu_F(v)$ (first row), $\mu = 10\mu_F(v)$ (second row), $\mu = 100 \mu_F(v)$ (third row) and $\mu = 1000 \mu_F(v)$ (fourth row). Note that each column corresponds to the same problem, only the parametrization of the algorithms differ. APCG failed for $(\mu_\psi=1/n, \mu = 1000 \mu_\psi)$ and $(\mu_\psi=0.1/n, \mu = 1000 \mu_\psi)$.
}
\label{fig:d0}
\end{figure}

\appendix

\section{Efficient implementation of the restart point}
\label{sec:impl}

Computing $\mathring x_k$ using \eqref{eq:center_approx} 
will be inefficient for APPROX because it involves full-dimensional
operations. We show in the following that a reformulation similar 
to the one in~\cite{lee2013efficient}  can be done to avoid such expensive 
computations.  

Define:
\begin{align}\label{a:wkdef}
w_k=\theta_{k-1}^{-2}(x_k-z_k), \enspace \forall k\geq 1
\end{align}
We first recall from~\cite{FR:2013approx} that the update for $\{w_{k}\}$ can  be as efficient as for $\{z_{k}\}$:
$$
w_{k+1}=w_{k}-\frac{1-\frac{n}{\tau}\theta_k}{\theta_k^2}(z_{k+1}-z_k).
$$
Define:
\begin{align}\label{a:alphabetadef}
&\alpha_i:=\frac{\gamma_{i+1}^i}{\theta_i^2 \theta_{i-1}^2},\enspace \beta_i :=\frac{\gamma_{i+1}^i}{\theta_i^2 },\enspace i=0,1,\dots 
\end{align}
 It is easy to deduce from the recursive equation~\eqref{arectheta} that
\begin{align}\label{a:gammakplusone}
\gamma_{k+1}^i=\frac{\theta_k^2\gamma_{i+1}^i}{\theta_{i}^2},\enspace \forall i=0,\dots,k.
\end{align}
Consequently,
\begin{align}\label{a:sumvec}&\sum_{i=0}^{k}\frac{\gamma_{k+1}^i}{\theta_{i-1}^2}x_i=
\sum_{i=0}^{k}\frac{\theta_k^2\gamma_{i+1}^i}{\theta_{i}^2\theta_{i-1}^2} x_i=
\theta_k^2\sum_{i=0}^k \alpha_i x_i.
\end{align}
Next we show that the sum vector~\eqref{a:sumvec} can be obtained efficiently using auxiliary coefficients and vectors. Define:
\begin{align}
& a_{i+1}:=\sum_{j=0}^i \alpha_j,\enspace b_{i+1}:=\sum_{j=0}^i  \beta_j,\enspace i=0,1,\dots
\\ & 
g_{i+1}:=\sum_{j=0}^i a_{j+1} (z_{j+1}-z_j), \enspace h_{i+1}:=\sum_{j=0}^i b_{j+1} (w_{j+1}-w_j),\enspace
i=0,1,\dots
\end{align}
Then clearly the update for $\{g_k\}$ and $\{h_k\}$ are also as efficient as for $\{z_{k}\}$:
$$
g_{k+1}=g_k+a_{k+1}(z_{k+1}-z_k),\enspace h_{k+1}=h_k-\frac{b_{k+1}(1-\frac{n}{\tau}\theta_k)}{\theta_k^2}(z_{k+1}-z_k).
$$
Moreover, it is easy to see that
\begin{align} \notag
\sum_{i=0}^k \alpha_i x_i &
\overset{\eqref{a:wkdef}}{=}\sum_{i=0}^k \alpha_i (z_i+\theta_{i-1}^2 w_i )
\overset{\eqref{a:alphabetadef}}{=}\sum_{i=0}^k \alpha_i z_i+\beta_i w_i\\ \notag
&=-\sum_{i=0}^k a_{i+1} (z_{i+1}-z_i)-\sum_{i=0}^k b_{i+1} (w_{i+1}-w_i)+a_{k+1}z_{k+1}+b_{k+1} w_{k+1}
\\&=-g_{k+1}-h_{k+1}+a_{k+1}z_{k+1}+b_{k+1} w_{k+1}\label{a:sumalphx}
\end{align}
 Hence,
\begin{align*}
\bar x_{k+1}&=\sigma x_{k+1}+(1-\sigma)\mathring x_{k+1} 
\\ &=\sigma x_{k+1}+ \frac{1-\sigma}{\sum_{i=0}^{k}\frac{\gamma_{k+1}^i}{\theta_{i-1}^2}+ \frac{n}{\tau}(\frac{1}{\theta_{k}} - \frac{n}{\tau}+1)}\Big[ \sum_{i=0}^{k}\frac{\gamma_{k+1}^i}{\theta_{i-1}^2}x_i+ \frac{n}{\tau}(\frac{1}{\theta_{k} }- \frac{n}{\tau}+1)x_{k+1} \Big]
\\&\overset{\eqref{a:gammakplusone}}{=}\sigma x_{k+1}+ \frac{1-\sigma}{\theta_k^2\sum_{i=0}^{k}\alpha_i+ \frac{n}{\tau}(\frac{1}{\theta_{k}} - \frac{n}{\tau}+1)}\Big[ \theta_k^2\sum_{i=0}^{k}\alpha_i x_i+ \frac{n}{\tau}(\frac{1}{\theta_{k} }- \frac{n}{\tau}+1)x_{k+1} \Big]
\\ &= \sigma x_{k+1}+ \frac{1-\sigma}{\theta_k^2a_{k+1}+ \frac{n}{\tau}(\frac{1}{\theta_{k}} - \frac{n}{\tau}+1)}\Big[ \theta_k^2\sum_{i=0}^{k}\alpha_i x_i+ \frac{n}{\tau}(\frac{1}{\theta_{k} }- \frac{n}{\tau}+1)x_{k+1} \Big]\\
&=\frac{\left(\sigma\theta_k^2 a_{k+1}+\frac{n}{\tau}(\frac{1}{\theta_{k}} - \frac{n}{\tau}+1)\right)x_{k+1}+(1-\sigma)\theta_k^2\sum_{i=0}^{k}\alpha_i x_i}{\theta_k^2a_{k+1}+ \frac{n}{\tau}(\frac{1}{\theta_{k}} - \frac{n}{\tau}+1)}
\end{align*}
Finally we plug in~\eqref{a:wkdef} and~\eqref{a:sumalphx} to obtain:
\begin{align*}
\bar x_{k+1}= z_{k+1}+\theta_k^2 w_{k+1}+\frac{(1-\sigma)\theta_k^2(-g_{k+1}-h_{k+1})+(1-\sigma)(\theta_k^2 b_{k+1}- \theta_k^4 a_{k+1}) w_{k+1}}{\theta_k^2 a_{k+1}+ \frac{n}{\tau}(\frac{1}{\theta_{k}} - \frac{n}{\tau}+1)}.
\end{align*}
The above reasoning showed that Algorithm~\ref{alg:approxree} is equivalent to Algorithm~\ref{algo:restartedFGMv2}. More importantly, note that full dimensional operations are avoided in Algorithm~\ref{alg:approxree}.
\begin{algorithm}[H]
\begin{algorithmic}[1]
\STATE{\textbf{Parameters}: Choose $\hat S$, $x_0\in \R^n$, $K\in \bN$, $\sigma \in (0,1)$.} 
\STATE{\textbf{Initialization}: Set $\tau=\Exp[|\hat S|]$, $\theta_0= \frac{\tau}{n}$, $r_0=0$, $a_{0}=0$, $b_{0}=0$ and $g_{0}=0$, $h_{0}=0$, $z_0=x_0$, $u_0=0$. }
\FOR{$k\geq 0$} 
\STATE $w_{k+1}\leftarrow w_k$\\
\STATE $z_{k+1}\leftarrow z_k$\\
\STATE  $a_{k+1} = a_{k}+\frac{r_k(1-\theta_k)}{\theta_k^4}$
\STATE $b_{k+1} = b_{k}+\frac{r_k}{\theta_k^2 }$
\STATE Randomly generate $S_k \sim \hat{S}$\\
\FOR{ $i \in S_k$}
\STATE
$
t_{k}^i=\arg\min_{t\in \R} \big\{\< \nabla_i f(\theta_{k}^2w_k+z_k), t>+\frac{n\theta_k v_i}{2\tau} |t|^2+\psi^i(z_k^i+t) \big\}
$ 
\STATE
$z_{k+1}^i= z_k^i+t_k^i$
\STATE
$ w_{k+1}^i= w_k^i-\frac{1-\frac{n}{\tau}\theta_k}{\theta_{k}^2} t_k^i$ \label{line:update_u}
\STATE 
$g_{k+1}^i=g_{k}^i+a_{k+1}t_k^i$
\STATE 
$h_{k+1}^i=h_{k}^i-\frac{b_{k+1}(1-\frac{n}{\tau}\theta_k)}{\theta_{k}^2} t_k^i$
\ENDFOR
\IF{ $k \equiv 0 \; \mathrm{mod} \; 
K
$}
\STATE  $z_{k+1} \leftarrow z_{k+1}+\theta_k^2 w_{k+1}+\frac{(1-\sigma)\theta_k^2(-g_{k+1}-h_{k+1})+(1-\sigma)(\theta_k^2 b_{k+1}- \theta_k^4 a_{k+1}) w_{k+1}}{\theta_k^2 a_{k+1}+ \frac{n}{\tau}(\frac{1}{\theta_{k}} - \frac{n}{\tau}+1)}$
\STATE $w_{k+1}  \leftarrow 0$
\STATE $g_{k+1} \leftarrow 0$
\STATE $h_{k+1} \leftarrow 0$
\STATE $\theta_{k+1}= \theta_0$, $r_{k+1}=0$, $a_{k+1}=0$, $b_{k+1}=0$
\ELSE{}
\STATE $\theta_{k+1}= \frac{\sqrt{\theta_k^4+4\theta_k^2}-\theta_k^2}{2}$
\STATE $r_{k+1}=\theta_{k+1} (1-\frac{n}{\tau}\theta_{k})+\frac{n}{\tau}(\theta_{k}-\theta_{k+1})$
\ENDIF
 \ENDFOR
\STATE $\mathbf{OUTPUT}: \theta_{k}^2 w_{k+1}+z_{k+1}$
\end{algorithmic}
\caption{APPROX restart efficient equivalent }
\label{alg:approxree}
\end{algorithm}

\section{Additional insight on the rate of convergence}
\label{sec:estimation}

%

Define:
$$
\xi_k:=\sum_{i=0}^k \frac{\gamma_k^i}{\theta_{i-1}^2},\enspace k=1,2,\dots.
$$
Then  \begin{align}\label{amKxiK} m_k(\mu)=\frac{\mu\theta_0^2}{1+\mu(1-\theta_0)}\left(\xi_k-\frac{1-\theta_0}{\theta_0^2}\right),\enspace k=1,2,\dots.\end{align}
We first prove a simple recursive equation.
\begin{lemma}
For any $k\geq 1$ we have:
\begin{align}\label{axik}
\xi_{k+1}=(1-\theta_k) \xi_k+ \frac{1+(\frac{n}{\tau}-1)\theta_k}{\theta_k}.
\end{align}
\end{lemma}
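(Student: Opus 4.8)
The plan is to expand the definition $\xi_{k+1}=\sum_{i=0}^{k+1}\frac{\gamma_{k+1}^i}{\theta_{i-1}^2}$ and split the sum according to the three cases in the recursion \eqref{eq:gammas}: the indices $i=0,\dots,k-1$, the index $i=k$, and the index $i=k+1$. For the first block, the relation $\gamma_{k+1}^i=(1-\theta_k)\gamma_k^i$ factors out $(1-\theta_k)$, giving
\[
\sum_{i=0}^{k-1}\frac{\gamma_{k+1}^i}{\theta_{i-1}^2}=(1-\theta_k)\sum_{i=0}^{k-1}\frac{\gamma_k^i}{\theta_{i-1}^2}=(1-\theta_k)\left(\xi_k-\frac{\gamma_k^k}{\theta_{k-1}^2}\right).
\]
Here I would use the identity $\gamma_k^k=\theta_{k-1}/\theta_0$ already invoked in the proof of Proposition~\ref{prop:approx_basic}, together with $\theta_0=\tau/n$, to rewrite $\frac{\gamma_k^k}{\theta_{k-1}^2}=\frac{n}{\tau\,\theta_{k-1}}$.

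Next I would add the two remaining contributions. The index $i=k$ contributes $\frac{1}{\theta_{k-1}^2}\big(\theta_k(1-\tfrac{n}{\tau}\theta_{k-1})+\tfrac{n}{\tau}(\theta_{k-1}-\theta_k)\big)$, and the index $i=k+1$ contributes $\frac{\gamma_{k+1}^{k+1}}{\theta_k^2}=\frac{n}{\tau}\frac{1}{\theta_k}$. Collecting everything except the leading $(1-\theta_k)\xi_k$, the terms of the form $\frac{n}{\tau\theta_{k-1}}$ and $\frac{n}{\tau}\frac{\theta_k}{\theta_{k-1}}$ cancel in pairs, and what survives after this bookkeeping is exactly $\frac{(1-\tfrac{n}{\tau})\theta_k}{\theta_{k-1}^2}+\frac{n}{\tau}\frac{1}{\theta_k}$.

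The final step is to eliminate $\theta_{k-1}$ using \eqref{arectheta} in the form $\frac{1}{\theta_{k-1}^2}=\frac{1-\theta_k}{\theta_k^2}$, which turns the surviving expression into $\frac{(1-\tfrac{n}{\tau})(1-\theta_k)}{\theta_k}+\frac{n}{\tau}\frac{1}{\theta_k}$; expanding the numerator over the common denominator $\theta_k$ collapses it to $\frac{1+(\tfrac{n}{\tau}-1)\theta_k}{\theta_k}$, which is precisely the claimed additive term. I expect the only real difficulty to be purely clerical: keeping careful track of the several $\tfrac{n}{\tau}$-weighted fractions so that the cancellations are applied correctly, and invoking \eqref{arectheta} at the right moment (to convert $\theta_{k-1}^{-2}$ into a $\theta_k$-expression) rather than earlier.
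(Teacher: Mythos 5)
Your proposal is correct and follows essentially the same route as the paper's proof: both split $\xi_{k+1}$ into the three index blocks of \eqref{eq:gammas}, factor $(1-\theta_k)$ out of the first block using $\sum_{i=0}^{k-1}\frac{\gamma_k^i}{\theta_{i-1}^2}=\xi_k-\frac{\gamma_k^k}{\theta_{k-1}^2}$, and reduce the remainder to $\frac{\gamma_{k+1}^k-(1-\theta_k)\gamma_k^k}{\theta_{k-1}^2}+\frac{\gamma_{k+1}^{k+1}}{\theta_k^2}$ before invoking \eqref{arectheta}. The only difference is that the paper leaves the final algebra implicit ("together with \eqref{arectheta} and \eqref{eq:gammas} yields the claim"), whereas you carry it out explicitly and correctly, including the cancellation down to $\theta_k(1-\frac{n}{\tau})/\theta_{k-1}^2+\frac{n}{\tau}\frac{1}{\theta_k}$.
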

\begin{proof}
Let any $k\geq 1$. We first decompose the sum and use~\eqref{eq:gammas} to obtain:
$$
\xi_{k+1}=\sum_{i=0}^{k+1} \frac{\gamma_{k+1}^i}{\theta_{i-1}^2}=\sum_{i=0}^{k-1}
\frac{\gamma_{k+1}^i}{\theta_{i-1}^2}+\frac{\gamma_{k+1}^k}{\theta_{k-1}^2}
+\frac{\gamma_{k+1}^{k+1}}{\theta_k^2}=
(1-\theta_k)\sum_{i=0}^{k-1}
\frac{\gamma_{k}^i}{\theta_{i-1}^2}+\frac{\gamma_{k+1}^k}{\theta_{k-1}^2}
+\frac{\gamma_{k+1}^{k+1}}{\theta_k^2}.
$$Then we get the recursive equation:
$$
\xi_{k+1}=(1-\theta_k)\xi_k+\frac{\gamma_{k+1}^k-(1-\theta_k)\gamma_k^k}{\theta_{k-1}^2}
+\frac{\gamma_{k+1}^{k+1}}{\theta_k^2},
$$
which together with~\eqref{arectheta} and~\eqref{eq:gammas} yields~\eqref{axik}.
\end{proof}

\begin{lemma}
For any $k\geq 1$ we have
\begin{align}\label{athetakmuk}
\frac{1}{3\theta_{k}^2}\leq\xi_k \leq \frac{1}{\theta_k^2}.
\end{align}
\end{lemma}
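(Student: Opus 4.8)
The plan is to prove both inequalities simultaneously by induction on $k$, using the recursion \eqref{axik}. Before starting I would record two elementary consequences of \eqref{arectheta}, which I rewrite as $\frac{1}{\theta_{k+1}^2}=\frac{1}{\theta_k^2}+\frac{1}{\theta_{k+1}}$. First, since $\theta_{k+1}\leq\theta_0=\tau/n$ by \eqref{atheradecr}, we have $\frac{1}{\theta_{k+1}}\geq\frac{n}{\tau}$, which will drive the upper bound. Second, the key auxiliary fact is $\theta_{k+1}\geq\theta_k/2$, i.e.\ $\frac{1}{\theta_{k+1}}\leq\frac{2}{\theta_k}$: writing $a=1/\theta_k$ and solving $b^2-b-a^2=0$ for $b=1/\theta_{k+1}$ gives $b=\tfrac12\bigl(1+\sqrt{1+4a^2}\bigr)$, and $b\leq2a$ reduces after squaring to $4a(3a-2)\geq0$, which holds since $a=1/\theta_k\geq1$.

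For the base case $k=1$, I would compute $\xi_1=\frac{\gamma_1^0}{\theta_{-1}^2}+\frac{\gamma_1^1}{\theta_0^2}=\frac{1}{\theta_0^2}$ from $\gamma_1^0=0$ and $\gamma_1^1=1$. The upper bound $\xi_1\leq1/\theta_1^2$ is immediate from $\theta_1\leq\theta_0$. For the lower bound I need $3\theta_1^2\geq\theta_0^2$, which via $\theta_1^2=\theta_0^2(1-\theta_1)$ reduces to $\theta_1\leq2/3$, a direct consequence of the explicit formula for $\theta_1$.

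For the induction step I substitute the recursion in the form $\xi_{k+1}=(1-\theta_k)\xi_k+\frac{1}{\theta_k}+\frac{n}{\tau}-1$ together with the inductive hypothesis. The upper bound follows from $\xi_{k+1}\leq\frac{1-\theta_k}{\theta_k^2}+\frac{1}{\theta_k}+\frac{n}{\tau}-1=\frac{1}{\theta_k^2}+\frac{n}{\tau}-1\leq\frac{1}{\theta_k^2}+\frac{1}{\theta_{k+1}}=\frac{1}{\theta_{k+1}^2}$, using $\frac{1}{\theta_{k+1}}\geq\frac{n}{\tau}>\frac{n}{\tau}-1$. For the lower bound, $\xi_{k+1}\geq\frac{1-\theta_k}{3\theta_k^2}+\frac{1}{\theta_k}+\frac{n}{\tau}-1=\frac{1}{3\theta_k^2}+\frac{2}{3\theta_k}+\frac{n}{\tau}-1$, and comparing with the target $\frac{1}{3\theta_{k+1}^2}=\frac{1}{3\theta_k^2}+\frac{1}{3\theta_{k+1}}$ reduces the claim to $\frac{2}{3\theta_k}+\frac{n}{\tau}-1\geq\frac{1}{3\theta_{k+1}}$, which holds because $\frac{n}{\tau}\geq1$ and $\frac{1}{\theta_{k+1}}\leq\frac{2}{\theta_k}$.

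The step I expect to require the most care is the lower bound, and specifically the auxiliary estimate $\theta_{k+1}\geq\theta_k/2$: it is exactly this factor of $2$, combined with $n/\tau\geq1$, that forces the constant $1/3$ (rather than a larger constant) in the claimed lower bound, so this is the place where the constant is genuinely consumed and cannot be improved by this argument. Everything else is routine algebra once the two $\theta$-inequalities are in hand.
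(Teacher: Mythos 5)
Your proof is correct and follows essentially the same route as the paper: induction on $k$ from the base case $\xi_1=1/\theta_0^2$ via the recursion \eqref{axik}, with the upper bound closed by $1/\theta_{k+1}\geq n/\tau$ and the lower bound by an elementary comparison of $\theta_{k+1}$ with $\theta_k$. The only (immaterial) difference is the auxiliary estimate used for the lower bound: you establish $\theta_{k+1}\geq\theta_k/2$ by solving the quadratic from \eqref{arectheta}, whereas the paper derives $1/\theta_k\geq 1/(\sqrt{3}\,\theta_{k+1})$ from $\theta_{k+1}\leq 2/3$; both suffice and lead to the same constant $1/3$.
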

\begin{proof}
We proceed by induction on $k$. First for $k=1$ we have:
$$
\xi_{1}=\frac{1}{\theta_0^2}\overset{\eqref{arectheta}}{=}\frac{1-\theta_1}{\theta_1^2}\leq \frac{1}{\theta_1^2}.
$$
Now suppose that we have
$$
\xi_k \leq \frac{1}{\theta_k^2}
$$
for some $k\geq 1$.  Then
\begin{align*}
\xi_{k+1} \overset{\eqref{axik}}{\leq}&
\frac{1-\theta_k}{\theta_k^2}+ \frac{1+(\frac{n}{\tau}-1)\theta_k}{\theta_k}
\leq \frac{1}{\theta_k^2}+\frac{1}{\theta_k}\overset{\eqref{arectheta}}{=}\frac{1-\theta_{k+1}}{\theta_{k+1}^2}+\frac{1}{\theta_k}=\frac{1}{\theta_{k+1}^2}-\frac{1}{\theta_{k+1}}+\frac{1}{\theta_k} \\
\overset{\eqref{atheradecr}}{\leq}&\frac{1}{\theta_{k+1}^2}.
\end{align*}
Thus we proved by recurrence the following upper bound:
$$
\xi_k \leq \frac{1}{\theta_k^2},\enspace \forall k=1,2,\dots.
$$
Next we prove the lower bound again by induction on $k$. Since $\theta_k\leq 1$, 
we first observe that
$$
\theta_{k+1}=\frac{\sqrt{\theta_k^4+4\theta_k^2}-\theta_k^2}{2}
\leq  \frac{\sqrt{5}-1}{2}\leq \frac{2}{3},\enspace \forall k=0,1,2,\dots
$$
Then we get:
\begin{align}\label{athe4}
\frac{1}{\theta_{k}^2}=\frac{1-\theta_{k+1}}{\theta_{k+1}^2}\geq \frac{1}{3\theta_{k+1}^2},\enspace \forall k=0,1,2,\dots
\end{align}
In particular,
$$
\xi_1=\frac{1}{\theta_0^2}
\geq \frac{1}{3\theta_1^2}.
$$
Now suppose that we have
$$
\xi_k \geq \frac{1}{3\theta_k^2}
$$
for some $k\geq 1$. Then
$$
\xi_{k+1} \overset{\eqref{axik}}{\geq}
\frac{1-\theta_k}{3\theta_k^2}+ \frac{1+(\frac{n}{\tau}-1)\theta_k}{\theta_k}
\geq \frac{1}{3\theta_k^2}-\frac{1}{3\theta_k}+\frac{1}{\theta_k}
\overset{\eqref{arectheta}}{=}\frac{1}{3\theta_{k+1}^2}-\frac{1}{3\theta_{k+1}}+\frac{2}{3\theta_k}
$$ 
Next we apply~\eqref{athe4} to obtain:
$$
\xi_{k+1} \geq \frac{1}{3\theta_{k+1}^2}-\frac{1}{3\theta_{k+1}}+\frac{2}{3 \sqrt{3} \theta_{k+1}} \geq \frac{1}{3\theta_{k+1}^2}+\frac{1}{3\theta_{k+1}}-\frac{1}{3\theta_{k+1}}= \frac{1}{3\theta_{k+1}^2}.
$$
We thereby proved the lower bound for any $k$.
\end{proof}
We then deduce directly from~\eqref{athetabd} the following corollary.
\begin{coro}
For any $k\geq 1$ we have
\begin{align}\label{athetakmuk1}
\frac{(k+2/\theta_0)^2}{12}\leq\xi_k \leq (k+1/\theta_0)^2.
\end{align}
\end{coro}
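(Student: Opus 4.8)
The plan is to simply chain together the two estimates already in hand: the sandwich bound $\frac{1}{3\theta_k^2}\leq \xi_k \leq \frac{1}{\theta_k^2}$ from \eqref{athetakmuk} and the explicit two-sided control of $\theta_k$ provided by \eqref{athetabd}. Since $\xi_k$ is squeezed between reciprocals of $\theta_k^2$, the whole corollary reduces to converting the bounds on $\theta_k$ into bounds on $1/\theta_k^2$ and substituting; no new induction or structural argument is needed.

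Concretely, for the upper bound on $\xi_k$ I would start from $\xi_k \leq \frac{1}{\theta_k^2}$ and feed in the \emph{lower} bound $\theta_k \geq \frac{1}{k+1/\theta_0}$ from \eqref{athetabd}. Squaring and taking reciprocals gives $\frac{1}{\theta_k^2}\leq (k+1/\theta_0)^2$, which immediately yields $\xi_k \leq (k+1/\theta_0)^2$. For the lower bound I would start from $\xi_k \geq \frac{1}{3\theta_k^2}$ and instead use the \emph{upper} bound $\theta_k \leq \frac{2}{k+2/\theta_0}$, so that $\frac{1}{\theta_k^2}\geq \frac{(k+2/\theta_0)^2}{4}$, hence $\xi_k \geq \frac{1}{3\theta_k^2}\geq \frac{(k+2/\theta_0)^2}{12}$. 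Putting the two together is exactly the claimed inequality \eqref{athetakmuk1}.

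The only point requiring any care is the bookkeeping of inequality directions: because the map $t\mapsto 1/t^2$ is order-reversing on the positive reals, the upper bound on $\xi_k$ must be paired with the lower bound on $\theta_k$ and vice versa. Matching the $\frac{1}{3}$ factor in the lower $\xi_k$ estimate with the $\frac{1}{4}$ coming from squaring the numerator $2$ is what produces the $\frac{1}{12}$ constant, so I would double-check that these constants multiply as intended. Beyond this there is no genuine obstacle; the corollary is a one-line substitution once \eqref{athetakmuk} and \eqref{athetabd} are granted, which is why it is stated as a direct consequence.
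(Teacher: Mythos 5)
Your proposal is correct and is exactly the paper's argument: the paper states the corollary as a direct consequence of \eqref{athetakmuk} and \eqref{athetabd}, and your substitution --- pairing the lower bound on $\theta_k$ with the upper bound on $\xi_k$ and vice versa, with the constants $\tfrac13$ and $\tfrac14$ combining to give $\tfrac1{12}$ --- is precisely that deduction.
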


\begin{lemma}\label{l:Kexys}
Let   $\lambda \geq \mu$ and \begin{align}\label{aKff}
K=\ceil*{\frac{2\sqrt{3}}{\theta_0}\sqrt{\frac{\lambda}{\mu}}-\frac{2}{\theta_0}}.
\end{align}
Then the following inequalities hold:
\begin{align}
& \lambda\leq \mu \theta_0^2 \xi_K\leq 9 \lambda  \label{afgd1},
\\&  {K} \leq \frac{2\sqrt{3}\sqrt{\lambda}}{\theta_0\sqrt{\mu}}.\label{afgggg}
\end{align}
\end{lemma}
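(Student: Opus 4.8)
The plan is to read off the three inequalities from the two-sided estimate $\eqref{athetakmuk1}$ on $\xi_K$ (equivalently from $\eqref{athetakmuk}$ and $\eqref{athetabd}$), together with careful bookkeeping of the ceiling in $\eqref{aKff}$. Write $c := \frac{2\sqrt 3}{\theta_0}\sqrt{\lambda/\mu}$, so that $K = \ceil{c - 2/\theta_0}$ and hence $c - \tfrac{2}{\theta_0} \le K < c - \tfrac{2}{\theta_0} + 1$. The first thing to secure is that $\eqref{athetakmuk1}$ may be invoked at index $K$, i.e. that $K \ge 1$: since $\lambda \ge \mu$ gives $\sqrt{\lambda/\mu} \ge 1$, we get $c - \tfrac2{\theta_0} \ge \tfrac{2(\sqrt3 - 1)}{\theta_0} > 0$, so indeed $K \ge 1$ and moreover $K\theta_0 \ge 2(\sqrt3 - 1)$.

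Two of the three statements then fall out immediately. For $\eqref{afgggg}$, the upper ceiling bound gives $K \le c - \tfrac2{\theta_0} + 1$, and since $\theta_0 \le 1$ we have $1 \le \tfrac1{\theta_0}$, whence $K \le c - \tfrac1{\theta_0} < c = \tfrac{2\sqrt3\sqrt\lambda}{\theta_0\sqrt\mu}$, which is exactly $\eqref{afgggg}$; in particular $K + \tfrac1{\theta_0} < c$. For the lower half of $\eqref{afgd1}$, the lower ceiling bound rearranges to $K + \tfrac2{\theta_0} \ge c$, and the lower estimate in $\eqref{athetakmuk1}$ then yields $\xi_K \ge \tfrac{(K + 2/\theta_0)^2}{12} \ge \tfrac{c^2}{12} = \tfrac{\lambda}{\mu\theta_0^2}$, i.e. $\mu\theta_0^2\xi_K \ge \lambda$.

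It remains to bound $\mu\theta_0^2 \xi_K$ from above, and this is where the real difficulty lies. The direct move is the upper estimate $\xi_K \le (K + 1/\theta_0)^2$ of $\eqref{athetakmuk1}$ combined with $K + \tfrac1{\theta_0} < c$, which gives $\mu\theta_0^2\xi_K < \mu\theta_0^2 c^2 = 12\lambda$. Thus the crude bound only yields the constant $12$, whereas the statement claims $9$. The slack is concentrated in one place: in $\eqref{athetakmuk1}$ the upper estimate on $\xi_K$ passes through $\xi_K \le 1/\theta_K^2$ and then through the lower bound $\theta_K \ge \tfrac{1}{K + 1/\theta_0}$ of $\eqref{athetabd}$, but this lower bound is loose by a factor of about two, since in truth $\theta_K$ is of the order of the matching upper bound $\tfrac{2}{K + 2/\theta_0}$. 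Consequently $1/\theta_K^2$ is really of size $\tfrac{(K+2/\theta_0)^2}{4}$ rather than $(K+1/\theta_0)^2$, so the honest value of $\mu\theta_0^2\xi_K$ is only about $3\lambda$.

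To reach the stated $9\lambda$ I would therefore replace the loose half of $\eqref{athetabd}$ by a sharp lower bound on $\theta_K$, proved by an induction parallel to the one establishing $\eqref{athetabd}$, of the form $\theta_k \ge \tfrac{2}{k + 2/\theta_0 + O(1)}$. This upgrades the chain to $\xi_K \le 1/\theta_K^2 \le \tfrac{(K + 2/\theta_0 + O(1))^2}{4}$; feeding in $K + \tfrac2{\theta_0} < c + 1$ (from the ceiling) then gives $\mu\theta_0^2\xi_K \le \tfrac14\mu\theta_0^2 (c + O(1))^2$, whose leading term is $3\lambda$ and whose lower-order corrections, controlled using $\lambda \ge \mu$ and $\theta_0 \le 1$, keep the total below $9\lambda$. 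The main obstacle is thus not the structure of the argument but establishing this tight lower bound on the momentum parameter $\theta_k$; tracking the constant through the corrections is the only genuinely fiddly step.
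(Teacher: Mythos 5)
Your handling of \eqref{afgggg} and of the lower bound in \eqref{afgd1} is exactly the paper's argument: with $c=\frac{2\sqrt3}{\theta_0}\sqrt{\lambda/\mu}$, bracket $K$ by $c-2/\theta_0\le K\le c-1/\theta_0$ (the upper end using $1\le 1/\theta_0$, or equivalently the paper's remark that the interval $[c-2/\theta_0,\,c-1/\theta_0]$ has length at least one), then feed $K+2/\theta_0\ge c$ and $K+1/\theta_0\le c$ into \eqref{athetakmuk1}. Your preliminary check that $K\ge 1$, needed before \eqref{athetakmuk1} may be invoked, is a point the paper passes over silently and is worth keeping.

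On the upper bound in \eqref{afgd1} you have not found a gap in your own argument but a discrepancy in the paper's: its proof concludes from $(K+1/\theta_0)^2\le 12\lambda/(\mu\theta_0^2)$ and $\xi_K\le(K+1/\theta_0)^2$, which yields $\mu\theta_0^2\xi_K\le 12\lambda$ --- precisely the constant $12$ you obtained --- and not the stated $9\lambda$. The constant is immaterial downstream, since only the lower bound $\lambda\le\mu\theta_0^2\xi_K$ is used in the proof of Proposition~\ref{prop:choose_sigma_and_K_appendix}; the clean fix is therefore to state the upper bound with $12\lambda$, not to sharpen the estimate. I would however warn you off the repair you sketch: a lower bound $\theta_k\ge 2/(k+2/\theta_0+O(1))$ with a \emph{bounded} correction term is false. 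Writing $u_k=1/\theta_k$, the recursion \eqref{arectheta} gives $u_{k+1}^2-u_k^2=u_{k+1}$, hence $u_{k+1}-u_k=u_{k+1}/(u_{k+1}+u_k)=\tfrac12+\Theta(1/k)$ and $u_k=k/2+\Theta(\log k)$; the denominator in any valid lower bound for $\theta_k$ of that shape must absorb a logarithmic drift, so no constant shift works for all $k$, and $K$ here can be arbitrarily large as $\mu\to 0$. So stop at $12\lambda$ and flag the statement's constant rather than trying to reach $9$.
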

\begin{proof}
Consider the interval 
$$\left [\frac{2\sqrt{3}}{\theta_0}\sqrt{\frac{\lambda}{\mu}}-\frac{2}{\theta_0}, \frac{2\sqrt{3}}{\theta_0}\sqrt{\frac{\lambda}{\mu}}-\frac{1}{\theta_0}
\right] .
$$We first observe that it is included in $\R_{>0}$ because $\lambda \geq \mu$. Moreover, the length of the interval 
is larger than~1. Therefore, $K$ defined by~\eqref{aKff} satisfies:
$$
\frac{2\sqrt{3}}{\theta_0}\sqrt{\frac{\lambda}{\mu}}-\frac{2}{\theta_0} \leq K \leq \frac{2\sqrt{3}}{\theta_0}\sqrt{\frac{\lambda}{\mu}}-\frac{1}{\theta_0}
$$Consequently we obtain~\eqref{afgggg} and
$$
\frac{(K+2/\theta_0)^2}{12}\geq \frac{\lambda}{\mu\theta_0^2}, \enspace
{(K+1/\theta_0)^2}\leq \frac{12 \lambda}{\mu \theta_0^2},
$$
which together with~\eqref{athetakmuk1} implies~\eqref{afgd1}. 
\end{proof}

\begin{proposition}
Let $\lambda \geq \mu$.  Choose \begin{align}\label{aK_appendix}
K=\ceil*{\frac{2\sqrt{3}}{\theta_0}\sqrt{\frac{\lambda}{\mu}}-\frac{2}{\theta_0} },
\end{align}
and 
\begin{align}\label{asigma_appendix}
\sigma=\frac{1}{1+m_K(\mu)}.
\end{align}
Then the iterates of Algorithm~\ref{algo:restartedFGMv2} satisfy for any $k \geq K$
\[
\Exp[\Delta(x_k)] \leq  \left(1- \min \left(\frac{\mu_F(v)}{\mu} ,1\right)\frac{\lambda-\mu(1-\theta_0)}{\lambda+1} \right) ^{\frac{k\theta_0 \sqrt{\mu}}{2\sqrt{3}\sqrt{\lambda}}-1}\Delta(x_0).
\]
\label{prop:choose_sigma_and_K_appendix}
\end{proposition}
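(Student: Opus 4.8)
The plan is to feed the specific pair $(\sigma,K)$ into Theorem~\ref{th:sigmaK}, so that the whole statement reduces to two tasks: (i) bounding the per-cycle contraction factor $\rho := \max\bigl(\sigma,\,1-\sigma m_K(\mu_F(v))\bigr)$ by the claimed rate $T := 1-\min(\mu_F(v)/\mu,1)\tfrac{\lambda-\mu(1-\theta_0)}{\lambda+1}$, and (ii) converting the exponent $(k-K)/K$ coming from Theorem~\ref{th:sigmaK} into the stated $\tfrac{k\theta_0\sqrt\mu}{2\sqrt3\sqrt\lambda}-1$. Task (ii) is immediate from~\eqref{afgggg}: since $K\le \tfrac{2\sqrt3\sqrt\lambda}{\theta_0\sqrt\mu}$ we get $\tfrac{k-K}{K}=\tfrac kK-1\ge \tfrac{k\theta_0\sqrt\mu}{2\sqrt3\sqrt\lambda}-1$, and once $\rho\le T<1$ is established, I chain $\rho^{(k-K)/K}\le T^{(k-K)/K}\le T^{\,k\theta_0\sqrt\mu/(2\sqrt3\sqrt\lambda)-1}$ (the first inequality uses $0<\rho\le T$ with nonnegative exponent $(k-K)/K$, the second uses $0<T<1$ together with $(k-K)/K$ being at least the target exponent). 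So the real work is task (i).

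For (i) I first rewrite $m_K$ through $\xi_K$ using~\eqref{amKxiK}: a one-line computation gives $1+m_K(\mu)=\tfrac{1+\mu\theta_0^2\xi_K}{1+\mu(1-\theta_0)}$, hence $\sigma=\tfrac{1+\mu(1-\theta_0)}{1+\mu\theta_0^2\xi_K}$. Plugging the lower bound $\mu\theta_0^2\xi_K\ge\lambda$ from~\eqref{afgd1} yields $\sigma\le\tfrac{1+\mu(1-\theta_0)}{1+\lambda}=1-\tfrac{\lambda-\mu(1-\theta_0)}{1+\lambda}$. Since $\min(\mu_F(v)/\mu,1)\le1$, this already shows the first branch obeys $\sigma\le T$, and it incidentally certifies $T\in(0,1)$ via $\lambda\ge\mu>\mu(1-\theta_0)$, which is what the exponent manipulations of task (ii) need.

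It remains to control the second branch, that is, to prove $\sigma m_K(\mu_F(v))=\tfrac{m_K(\mu_F(v))}{1+m_K(\mu)}\ge 1-T$; I expect this to be the main obstacle, since it needs a case split and a monotonicity argument. If $\mu_F(v)\ge\mu$, then $1-T=\tfrac{\lambda-\mu(1-\theta_0)}{1+\lambda}$ and, because $m_K$ is increasing in its first argument, $\tfrac{m_K(\mu_F(v))}{1+m_K(\mu)}\ge\tfrac{m_K(\mu)}{1+m_K(\mu)}=1-\sigma\ge\tfrac{\lambda-\mu(1-\theta_0)}{1+\lambda}$. If $\mu_F(v)<\mu$, then $1-T=\tfrac{\mu_F(v)}{\mu}\tfrac{\lambda-\mu(1-\theta_0)}{1+\lambda}$; writing $Q:=\mu\theta_0^2\xi_K$ and substituting the explicit forms $m_K(\mu)=\tfrac{\mu\theta_0^2\xi_K-\mu(1-\theta_0)}{1+\mu(1-\theta_0)}$ and the analogous one for $m_K(\mu_F(v))$, the factor $\tfrac{1+\mu(1-\theta_0)}{1+\mu_F(v)(1-\theta_0)}\ge1$ can be dropped to obtain $\tfrac{m_K(\mu_F(v))}{1+m_K(\mu)}\ge\tfrac{\mu_F(v)}{\mu}\cdot\tfrac{Q-\mu(1-\theta_0)}{1+Q}$. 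The map $Q\mapsto\tfrac{Q-\mu(1-\theta_0)}{1+Q}$ is increasing (its derivative is $\tfrac{1+\mu(1-\theta_0)}{(1+Q)^2}>0$), so $Q\ge\lambda$ from~\eqref{afgd1} gives $\tfrac{Q-\mu(1-\theta_0)}{1+Q}\ge\tfrac{\lambda-\mu(1-\theta_0)}{1+\lambda}$, closing this case.

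Combining the two branches yields $\rho\le T$, and together with task (ii) this produces the claimed bound. The only delicate points are maintaining the positivity $\lambda-\mu(1-\theta_0)>0$ (so that $\rho,T\in(0,1)$ and every power comparison has the correct direction) and justifying the dropping of the $\mu_F(v)$-dependent factor in the second case; everything else is substitution into~\eqref{amKxiK} and the two-sided estimate~\eqref{afgd1}.
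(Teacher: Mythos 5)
Your proposal is correct and follows essentially the same route as the paper's proof: reduce to Theorem~\ref{th:sigmaK}, express $m_K$ through $\xi_K$ via \eqref{amKxiK}, use the two-sided bound \eqref{afgd1} together with the discarding of the factor $\tfrac{1+\mu(1-\theta_0)}{1+\mu_F(v)(1-\theta_0)}\ge 1$ to bound the contraction factor, and use \eqref{afgggg} to convert the exponent. The only cosmetic difference is that you bound both branches of the $\max$ separately (and spell out the monotonicity of $Q\mapsto\tfrac{Q-\mu(1-\theta_0)}{1+Q}$, which the paper uses implicitly), whereas the paper first identifies the active branch with indicator functions.
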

\begin{proof}
Let $\{x_k\}$ be the iterates of Algorithm~\ref{algo:restartedFGMv2} using $\sigma$ and $K$ defined by~\eqref{aK_appendix} and~\eqref{asigma_appendix}. By Proposition~\ref{th:sigmaK},  for any $k \geq K$,
\[
\Exp[\Delta(x_k)] \leq   \left(\max\left( \sigma, 1 -\sigma m_K(\mu_F(v)) \right)^{1/K}\right)^{k-K}\Delta(x_0).
\]
Thus we just need to prove:
$$
\left(\max\left( \sigma, 1 -\sigma m_K(\mu_F(v)) \right)^{1/K}\right)^{k-K} 
\leq \left(1- \min \left(\frac{\mu_F(v)}{\mu} ,1\right)\frac{\lambda-\mu(1-\theta_0)}{\lambda+1} \right) ^{\frac{k\theta_0 \sqrt{\mu}}{2\sqrt{3}\sqrt{\lambda}}-1}.
$$
We first note that if $\mu>\mu_F(v)$, then 
$$
m_K(\mu) >m_K(\mu_F(v)).
$$
Therefore,
\begin{align*}
&\max\left( \sigma, 1 -\sigma m_K(\mu_F(v)) \right)\\&=1_{\mu\leq \mu_F(v)}\frac{1}{1+m_K(\mu)}+ 1_{\mu>\mu_F(v)}\frac{1+m_K(\mu)-m_K(\mu_F(v))}{1+m_K(\mu)}
\end{align*}
where 
$$
1_{\mu\leq \mu_F(v)}=\left\{\begin{array}{ll}1 & \mathrm{if~} \mu \leq \mu_F(v) \\ 0 & \mathrm{otherwise} \end{array}\right. \enspace,
$$
and $ 1_{\mu> \mu_F(v)}=1- 1_{\mu\leq \mu_F(v)}$. Next we replace $m_K(\mu)$ using~\eqref{amKxiK} and rearrange the terms:
\begin{align*}\max&\Big( \sigma, 1 -\sigma m_K(\mu_F(v)) \Big)\\
&=1_{\mu\leq \mu_F(v)}\frac{1}{1+\frac{\mu \theta_0^2}{1+\mu(1-\theta_0)}\left(\xi_K-\frac{1-\theta_0}{\theta_0^2}\right)} +1_{\mu>\mu_F(v)}
\frac{1+ \left(\frac{\mu \theta_0^2}{1+\mu(1-\theta_0)}-\frac{\mu_F(v) \theta_0^2}{1+\mu_F(v)(1-\theta_0)}\right)\left(\xi_K-\frac{1-\theta_0}{\theta_0^2}\right)}{1+\frac{\mu \theta_0^2}{1+\mu(1-\theta_0)}\left(\xi_K-\frac{1-\theta_0}{\theta_0^2}\right)}
\\&=1_{\mu\leq \mu_F(v)}
\frac{1+\mu(1-\theta_0)}{1+\mu \theta_0^2 \xi_K} +1_{\mu> \mu_F(v)} \frac{1+\mu(1-\theta_0)+\left(
\mu \theta_0^2 -\mu_F(v)\theta_0^2 \frac{1+\mu(1-\theta_0)}{1+\mu_F(v)(1-\theta_0)}
\right)\left(\xi_K-\frac{1-\theta_0}{\theta_0^2}\right)}{1+\mu \theta_0^2 \xi_K}
\\&=1_{\mu\leq \mu_F(v)}
\frac{1+\mu(1-\theta_0)}{1+\mu \theta_0^2 \xi_K} 
1_{\mu> \mu_F(v)} \frac{1+\mu\theta_0^2\xi_K-\left( \frac{1+\mu(1-\theta_0)}{1+\mu_F(v)(1-\theta_0)}
\right)\mu_F(v) \theta_0^2\left(\xi_K-\frac{1-\theta_0}{\theta_0^2}\right)}{1+\mu \theta_0^2 \xi_K}
\end{align*}
By~\eqref{afgd1},
$$
\xi_K \geq \frac{\lambda}{\mu \theta_0^2} \geq \frac{1-\theta_0}{\theta_0^2}.
$$ 
Therefore, 
\begin{align}\notag
&\max\left( \sigma, 1 -\sigma m_K(\mu_F(v)) \right)
\\ \notag&\leq 1_{\mu\leq \mu_F(v)}
\frac{1+\mu(1-\theta_0)}{1+\mu \theta_0^2 \xi_K}+1_{\mu> \mu_F(v)} \frac{1+\mu\theta_0^2\xi_K-\mu_F(v) \theta_0^2\left(\xi_K-\frac{1-\theta_0}{\theta_0^2}\right)}{1+\mu \theta_0^2 \xi_K}
\\\notag &=
 1_{\mu\leq \mu_F(v)} \left(
1-\frac{\mu\theta_0^2(\xi_K-\frac{1-\theta_0}{\theta_0^2})}{1+\mu \theta_0^2 \xi_K}
\right)+1_{\mu> \mu_F(v)} \left(
1-\frac{\mu_F(v)\theta_0^2(\xi_K-\frac{1-\theta_0}{\theta_0^2})}{1+\mu \theta_0^2 \xi_K}
\right)
\\\notag &=1-\min\left(1,\frac{\mu_F(v)}{\mu}\right)\frac{\mu\theta_0^2\xi_K-\mu(1-\theta_0)}{1+\mu \theta_0^2 \xi_K}
\\&
\leq 1-\min\left(1,\frac{\mu_F(v)}{\mu}\right)\frac{\lambda-\mu(1-\theta_0)}{\lambda+1}.\label{arggffg}
\end{align}
Consequently,
\begin{align}\label{a:boundmax}
\left(\max\left( \sigma, 1 -\sigma m_K(\mu_F(v)) \right)^{1/K}\right)^{k-K} 
\leq  \left(1-\min\left(1,\frac{\mu_F(v)}{\mu}\right)\frac{\lambda-\mu(1-\theta_0)}{\lambda+1}\right)^{k/K-1}
\end{align}
Next we apply~\eqref{afgggg} and get:
\begin{align}\label{afgert}
   \left( 1-\min \Big( 1,\frac{\mu_F(v)}{\mu} \Big)\frac{\lambda-\mu(1-\theta_0)}{\lambda+1}\right)^{\frac{1}{K}}
\leq \left( 1-\min \Big( 1,\frac{\mu_F(v)}{\mu} \Big)\frac{\lambda-\mu(1-\theta_0)}{\lambda+1} \right)^{\frac{\theta_0 \sqrt{\mu}}{2\sqrt{3}\sqrt{\lambda}}}
\end{align}

Then by \eqref{a:boundmax} and \eqref{afgert},
\begin{eqnarray*}
\left(\max\big( \sigma, 1 -\sigma m_K(\mu_F(v)) \big)^{1/K}\right)^{k-K} 
&\leq& \left( 1-\min\Big(1,\frac{\mu_F(v)}{\mu}\Big)\frac{\lambda-\mu(1-\theta_0)}{\lambda+1}\right)^{\frac{k\theta_0 \sqrt{\mu}}{2\sqrt{3}\sqrt{\lambda}}-1}
\end{eqnarray*}
which is the inequality we wanted to prove.
\end{proof}

\begin{proof}[proof of Corollary~\ref{corok}]
Taking $\lambda = 1+\mu$ in Proposition~\ref{prop:choose_sigma_and_K_appendix},
we can see that we have the result if
\[
\Exp[\Delta(x_k)] \leq   \left(1- \min \left(\frac{\mu_F(v)}{\mu} ,1\right)\frac{1+\mu\theta_0}{2+\mu} \right) ^{\frac{k\theta_0 \sqrt{\mu}}{2\sqrt{3}\sqrt{1+\mu}}-1}\Delta(x_0) \leq \epsilon.
\]
Passing to the logarithm leads to 
\begin{align*}
&\log\left(1- \min \left(\frac{\mu_F(v)}{\mu} ,1\right)\frac{1+\mu\theta_0}{2+\mu} \right) \left(\frac{k\theta_0 \sqrt{\mu}}{2\sqrt{3}\sqrt{1+\mu}}-1\right) + \log(\Delta(x_0)) \leq \log(\epsilon),
\end{align*}
which is equivalent to
\begin{align*}
&  \log\left(\frac{\Delta(x_0)}{\epsilon} \right) \leq -\log\left(1- \min \left(\frac{\mu_F(v)}{\mu} ,1\right)\frac{1+\mu\theta_0}{2+\mu} \right) \left(\frac{k\theta_0 \sqrt{\mu}}{2\sqrt{3}\sqrt{1+\mu}}-1\right)\enspace.
\end{align*}
As $-\log(1-x) \geq x$, it is enough to have
\[
\log\left(\frac{\Delta(x_0)}{\epsilon} \right) \leq \min \left(\frac{\mu_F(v)}{\mu} ,1\right)\frac{1+\mu\theta_0}{2+\mu} \left(\frac{k\theta_0 \sqrt{\mu}}{2\sqrt{3}\sqrt{1+\mu}}-1\right) \enspace,
\]
which yields:
\[
k\geq \frac{2\sqrt{3}}{\theta_0} \frac{(2+\mu)\sqrt{1+\mu}}{1+\mu \theta_0}  \max\left(\frac{\sqrt{\mu}}{\mu_F(v)},\frac{1}{\sqrt{\mu}}\right)\log\left(\frac{\Delta(x_0)}{\epsilon} \right)+\frac{2\sqrt{3}}{\theta_0}\sqrt{1+\frac{1}{\mu}}
\]
We get the corollary by noting that
$$
\frac{(2+\mu)\sqrt{1+\mu}}{1+\mu \theta_0} \leq 3\sqrt{2}.
$$
\end{proof}

\section*{Acknowledgement}

This work was supported by the
EPSRC  Grant EP/K02325X/1
{\em Accelerated Coordinate Descent Methods for Big Data Optimization},
the Centre for Numerical Algorithms and Intelligent Software (funded by EPSRC grant EP/G036136/1 and the Scottish Funding Council) and the Orange/Telecom ParisTech think tank Phi-TAB. 
This research was conducted using the HKU Information Technology Services research computing facilities that are supported in part by the Hong Kong UGC Special Equipment Grant (SEG HKU09).
We would like to thank Peter Richt\'arik for his
useful advice at the beginning of this project.

\bibliographystyle{../siamplain}
\bibliography{../literature}

\end{document}